\pgfplotsset{compat=1.18}
\numberwithin{equation}{section}
\newcommand{\todo}{\textcolor{red}{TODO}} 
\theoremstyle{plain}
\newtheorem{thm}{Theorem}[section]
\newtheorem{lem}[thm]{Lemma}
\newtheorem{prop}[thm]{Proposition}
\newtheorem{claim}{Claim}[section]
\newtheorem*{claim*}{Claim}
\newtheorem{introthm}{Theorem}[section]
\newtheorem{introconj}[introthm]{Conjecture}
\newtheorem{introprop}[introthm]{Proposition}
\theoremstyle{definition}
\newtheorem{dfn}[thm]{Definition}
\newtheorem{eg}[thm]{Example}
\newtheorem*{Ack}{Acknowledgement}
\newtheorem*{NoCon}{Notation and Conventions}
\newtheorem*{Out}{Outline of this paper}
\theoremstyle{remark}
\newtheorem{rem}[thm]{Remark}
\newtheorem*{rem*}{Remark}
\DeclareMathOperator{\Ext}{Ext}
\DeclareMathOperator{\Pic}{Pic}
\DeclareMathOperator{\Stab}{Stab}
\DeclareMathOperator{\divstab}{Stab_{\mathrm{div}}}
\DeclareMathOperator{\Amp}{Amp}
\DeclareMathOperator{\Eff}{Eff}
\DeclareMathOperator{\Bl}{Bl}
\DeclareMathOperator{\Coker}{Coker}
\DeclareMathOperator{\ch}{ch}
\DeclareMathOperator{\rk}{rk}
\DeclareMathOperator{\Cone}{Cone}
\DeclareMathOperator{\Coh}{Coh}
\DeclareMathOperator{\Image}{Im}
\DeclareMathOperator{\QR}{\mathcal{QR}}
\DeclareMathOperator{\End}{End}
\DeclareMathOperator{\mmod}{mod}
\DeclareMathOperator{\rep}{rep}
\newcommand\Hom{\mathop{\mathrm{Hom}}\nolimits}
\newcommand{\PPO}{\mathbb{P}^{1}}
\newcommand{\PPT}{\mathbb{P}^{2}}
\newcommand{\OEE}{\mathscr{O}_{E}}
\newcommand{\wallequation}[2]{\mathcal{W_{\mathrm{eq}}}(#1, #2)}
\newcommand{\wall}[2]{\mathcal{W}(#1, #2)}
\newcommand{\wallpolatwo}[4]{\mathcal{W}_{#3, #4}(#1, #2)}
\newcommand{\wallintpola}[5]{\mathcal{W}^{\mathrm{int}}_{#3, #4, #5}(#1, #2)}
\newcommand{\wallextpola}[5]{\mathcal{W}^{\mathrm{ext}}_{#3,#4,#5}(#1, #2)}
\newcommand{\wallpola}[5]{\mathcal{W}_{#3, #4, #5}(#1, #2)}
\newcommand{\wallpolalim}[5]{\mathcal{W}^{0}_{#3, #4, #5}(#1, #2)}
\newcommand{\wallintpolalim}[5]{\mathcal{W}^{\mathrm{int},0}_{#3, #4, #5}(#1, #2)}
\newcommand{\wallextpolalim}[5]{\mathcal{W}^{\mathrm{ext}, 0}_{#3,#4,#5}(#1, #2)}
\newcommand{\wallpolalimtwo}[4]{\mathcal{W}^{0}_{#3, #4}(#1, #2)}
\newcommand\dual{\raise0.9ex\hbox{$\scriptscriptstyle\vee$}}
\newcommand{\numerator}{\gamma}
\newcommand{\denominator}{\delta_1}
\newcommand{\denominatortwo}{\delta_2}
\newcommand{\CB}{\mathbb{C}}
\newcommand{\EB}{\mathbb{E}}
\newcommand{\FB}{\mathbb{F}}
\newcommand{\HB}{\mathbb{H}}
\newcommand{\LB}{\mathbb{L}}
\newcommand{\RB}{\mathbb{R}}
\newcommand{\PP}{\mathbb{P}}
\newcommand{\ZZ}{\mathbb{Z}}
\newcommand{\AC}{\mathcal{A}}
\newcommand{\DC}{\mathcal{D}}
\newcommand{\FC}{\mathcal{F}}
\newcommand{\HC}{\mathcal{H}}
\newcommand{\PC}{\mathcal{P}}
\newcommand{\SC}{\mathcal{S}}
\newcommand{\TC}{\mathcal{T}}
\newcommand{\WC}{\mathcal{W}}
\newcommand{\OX}{\mathscr{O}_{X}}
\newcommand{\OO}{\mathscr{O}}
\newcommand{\Rbf}{\mathbf{R}}
\author{Yuki Mizuno}
\address{Department~of~Mathematics, School~of~Science~and~Engineering, Waseda~University, Ohkubo~3-4-1, Shinjuku, Tokyo~169-8555, Japan}
\email{\href{mailto:m7d5932a72xxgxo@fuji.waseda.jp}{m7d5932a72xxgxo@fuji.waseda.jp},\href{mailto:mizuno.y@aoni.waseda.jp}{mizuno.y@aoni.waseda.jp}}
\author{Tomoki Yoshida}
\address{Department~of~Mathematics, School~of~Science~and~Engineering, Waseda~University, Ohkubo~3-4-1, Shinjuku, Tokyo~169-8555, Japan}
\email{\href{mailto:tomoki_y@asagi.waseda.jp}{tomoki\_y@asagi.waseda.jp}}
\title[Bridgeland Stability of Sheaves on del Pezzo Surface]{Bridgeland Stability of Sheaves on del Pezzo Surface of Picard Rank Three}
\date{May 21, 2025}
\keywords{Derived category, Bridgeland stability condition, Line bundles, del Pezzo surfaces.}
\subjclass[2020]{14F08 (primary), 18G80, 14J26 (secondary)}
\begin{document}
\begin{abstract}
This article discusses the Bridgeland stability of some sheaves on the blow-up of $\PPT$ at two general points.
We have determined the destabilizing objects of the line bundles and have shown that $\mathscr{O}(E)|_{E}$ is Bridgeland stable for any $(-1)$-curve $E$ and any divisorial Bridgeland stability condition.
\end{abstract}
\maketitle
\setcounter{tocdepth}{2} 
\tableofcontents
\setcounter{section}{-1}
\section{Introduction}\label{introduction}

If one wishes to investigate sheaves on an algebraic variety $X$, 
one effective approach is to consider the stability of sheaves.
For a given stability condition, 
we can classify the sheaves into (semi)stable ones and not (semi)stable ones.
In particular, the moduli space of (semi)stable sheaves has desirable geometric properties.
Specific stability conditions for sheaves include slope stability and Gieseker stability, which are determined by each polarization. 
These stability conditions have demonstrated favorable properties and yielded numerous successes. 
For instance, line bundles become stable objects because they have only ideal sheaves as subsheaves. 
In the case of the del Pezzo surface, it has been shown by Kuleshov and Orlov in \cite{kuleshov_orlov_1994_exceptional_sheaves_on_del_pezzo_surfaces} that any exceptional object on a del Pezzo surface can be written as a shift of exceptional sheaves, and it is stable in the sense of Gieseker.

The notion of stability is closely related to the theory of moduli spaces.
The moduli functor of sheaves on an algebraic variety often fails to possess desirable properties from the perspective of projective algebraic geometry. In particular, it does not always admit a coarse moduli space. However, it is known that the moduli functor of semistable sheaves has a coarse moduli space, which is a projective scheme.
The theory of moduli spaces of sheaves has been extensively studied due to its deep connections with other topics such as hyperK\"ahler varieties \cite{mukai_1984_symplectic_structure_of_the_moduli_space_of_sheaves_on_an_abelian_or_k3_surface}.

\subsection{Bridgeland Stability Conditions}\label{subsection: introduction Bridgeland stability conditions}
Recently, there has been growing interest in studying moduli problems from the perspective of the bounded derived category of coherent sheaves on algebraic varieties.
 In \cite{bridgeland_2007_stability_conditions_on_triangulated_categories}, Bridgeland introduced stability conditions on a derived category as analogs of stability conditions for sheaves that are called \emph{Bridgeland stability conditions}. 
He attracted significant attention by linking these stability conditions to the problem of determining the autoequivalence group of the derived category of a K3 surface
(\cite{bridgeland_2008_stability_conditions_on_k3_surfaces}).
A Bridgeland stability condition $\sigma=(Z, \AC)$ consists of a map $Z: K_{\mathrm{num}}(X) \to \mathbb{C}$ called the central charge, and an abelian category $\AC$ called a heart of a bounded t-structure that satisfies conditions such as the existence of the Harder-Narasimhan filtration. 
Let $\Stab(X)$ be the set of Bridgeland stability conditions on $X$.
In this setting, there exists a natural map
\[
\begin{tikzcd}[row sep=0.1cm]
\Stab(X)\arrow[r]& {\Hom(K_{\mathrm{num}}(X), \CB)},  \\
{(Z, \AC)}\arrow[r, maps to]& Z                             
\end{tikzcd}
\]
which is known to be locally homeomorphic. In particular, $\Stab(X)$ carries the structure of a complex manifold.

\subsection{Main Theorem}\label{subsection: introduction main theorems}
While classical sheaf stability on surfaces guarantees, for example, that every line bundle is stable, the Bridgeland stability condition does not necessarily inherit this property. 
For instance, it does not generally hold that fundamental objects, such as skyscraper sheaves or line bundles, remain (semi)stable under arbitrary Bridgeland stability conditions. 
A Bridgeland stability condition under which skyscraper sheaves are stable is called a geometric stability condition.

For a smooth projective surface $X$ over $\CB$, there is a natural continuous embedding
\[
N^1(X)\times\Amp(X) \hookrightarrow \Stab(X)
\]
whose image consists of what are called divisorial stability conditions. 
They can be viewed as a direct generalization of classical sheaf stability and are known to be geometric.
Consequently, in this paper, we restrict our attention to divisorial stability conditions.

Even within this geometric framework, line bundles can still fail to be $\sigma$-stable. 
It is well known that, sufficiently close to the large-volume limit (i.e., when the chosen ample divisor is taken to be very large), Bridgeland stability recovers Gieseker stability, and hence all line bundles become stable. 
In contrast, their behavior under small polarizations is not well understood.
One of the known cases is a surface that does not have a curve with a negative self-intersection.
In that case, any line bundle is stable. The question then arises: under what conditions does this occur? 
A plausible conjecture was submitted in \cite{arcara_miles_2016_bridgeland_stability_of_line_bundles_on_surfaces} as follows:
\begin{introconj}[{\cite[Conjecture 1.]{arcara_miles_2016_bridgeland_stability_of_line_bundles_on_surfaces}}]\label{Conjecture: stability conjecture}
    Let $X$ be a smooth projective surface and $\sigma_{D, H}$ be a divisorial stability condition on $X$.
    Then, the following hold:

    Let $L$ be a line bundle on $X$.
    \begin{itemize}
        \item If an object $Z$ destabilizes $L$ at $\sigma_{D, H}$, there is a (maximally) destabilizing object $L(-C)$, where $C$ is negative self-intersection curve $C$ on $X$.
        \item If an object $Z$ destabilizes $L[1]$ at $\sigma_{D, H}$, there is a (maximally) destabilizing object $L(C)|_{C}$, where $C$ is negative self-intersection curve $C$ on $X$. 
    \end{itemize}
\end{introconj}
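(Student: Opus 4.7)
The plan is to work inside the tilted heart $\AC_{D,H}$ associated with $\sigma_{D,H}$, obtained by tilting $\Coh(X)$ at the usual twisted-slope torsion pair. A destabilizer of $L$ corresponds to a short exact sequence $0\to F\to L\to G\to 0$ in $\AC_{D,H}$ with $\phi_\sigma(F)>\phi_\sigma(L)$, and dually for $L[1]$ one studies destabilizing quotients.

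For the first bullet I would first reduce an arbitrary destabilizer $F$ to a rank-one torsion-free sheaf sitting inside $L$. The long exact sequence of cohomology with respect to the standard t-structure applied to $0\to F\to L\to G\to 0$ forces $H^{-1}(F)=0$, so $F$ is a genuine subsheaf of $L$. Since $\rk L=1$, the image of $F$ in $L$ has the form $L(-C)\otimes\IC_Z$ for an effective divisor $C$ and a zero-dimensional subscheme $Z$. Saturation removes $\IC_Z$ without decreasing the phase, so one may replace any destabilizer by one of the form $L(-C)\hookrightarrow L$, and the maximal destabilizer has this form for an optimal choice of $C$. For the second bullet the dual statement is obtained by applying the same argument to quotients in $\AC_{D,H}$: a destabilizing quotient of $L[1]$ produces, via $H^0$, a torsion quotient of $L$, which after saturating along its schematic support becomes $L(C)|_C$ for an effective curve $C$.

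The essential remaining step is to prove the negativity of $C^2$ for the optimal choice of $C$. Writing out the destabilizing inequality $\phi_\sigma(L(-C))>\phi_\sigma(L)$ explicitly in terms of the intersection numbers $C\cdot H$, $C\cdot D$ and $C^2$ yields a quadratic constraint on $C$; when $C^2\geq 0$, the Hodge index inequality $(C\cdot H)^2\geq C^2\cdot H^2$ together with the ampleness of $H$ should contradict this constraint, forcing $C^2<0$. This is the step I expect to be most delicate, since the estimate must be uniform across the parameter space $N^1(X)\times\Amp(X)$ and is sensitive to the local wall structure near $(D,H)$.

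In the setting of this paper, where $X$ is the blow-up of $\PPT$ at two general points, the Picard lattice is small and there are only three $(-1)$-curves, so one can enumerate the candidate divisors $C$ and verify the required bounds by direct computation; this is presumably how the authors verify the conjecture in this case. For a general surface no such enumeration is available, and one would presumably need to combine Bogomolov-type inequalities for two-term complexes in $\AC_{D,H}$ with an argument ruling out destabilizers supported on positive-dimensional families of curves.
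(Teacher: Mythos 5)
There is a genuine gap at the first step of your argument for the first bullet: the reduction of an arbitrary destabilizer to a rank-one subsheaf. If $F\subset L$ in $\AC_{D,H}$ has rank $r>1$, its image in $L$ is indeed of the form $L(-C)\otimes I_Z$, but replacing $F$ by this image is a passage to a \emph{quotient} of $F$ (with kernel $H^{-1}(Q)$ for $Q$ the cone of $F\to L$), and by the see-saw property the Bridgeland phase of a quotient can be strictly smaller than that of $F$; nothing forces $\phi_\sigma\bigl(L(-C)\otimes I_Z\bigr)\ge\phi_\sigma(F)$. This is exactly the difficulty on which the paper spends most of its effort: it argues by induction on $\rk F$, using Bertram's Lemma (\cref{lemma: Bertram's lemma for line bundle}) and its consequences (\cref{lemma: Arcara Miles 5.6 not Left Hyperbola case}, \cref{Lemma: Arcara Miles 5.7 Left Hyperbola case}), which trade a higher-rank destabilizer for a Harder--Narasimhan factor or a quotient by one, of smaller rank and with a larger wall, together with a twisting trick ($F\rightsquigarrow F(a_1E_1)$ considered at the translated stability condition $\sigma_{D+a_1E_1,tH}$) in \cref{proposition: maximal destabilize}. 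Your saturation step (removing $I_Z$ at the cost of extending by a zero-dimensional torsion sheaf of maximal phase) is fine, but it only becomes available once a rank-one destabilizer is already in hand.

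The second point you leave essentially open is the one carrying all the content in the rank-one case: producing a \emph{maximal} destabilizer among the $L(-C)$. A Hodge-index argument of the kind you sketch can at best show $C^2<0$ for a given destabilizer (and this is indeed forced, via the classification of weakly destabilizing walls in \cref{proposition: classification of destabilizing walls}: an actual destabilizing wall must be a left hyperbola, which requires $c_1(F)^2<0$). What the paper actually proves is the nesting of the limiting walls $\wallpolalim{\OO(-C)}{\OO}{H}{G_1}{G_2}$ --- hyperboloids of two sheets in the three-dimensional slice $\SC^0_{H,G_1,G_2}$ --- inside the walls of $\OO(-E)$, $\OO(-E_1)$, $\OO(-E_2)$, via the case analysis and positivity estimates of \cref{proposition:rank1_z<0}, \cref{lemma:rank1_z>=0} and \cref{proposition:rank1_z>=0}; this is not a finite enumeration, since $C$ ranges over all negative effective classes. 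Finally, for the $L[1]$ statement the paper does not rerun the argument on quotients but reduces to the first bullet by the duality $(-)\dual=\RB\HC om(-,\OO)[1]$, following Arcara--Miles.
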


In \cite{arcara_miles_2016_bridgeland_stability_of_line_bundles_on_surfaces}, \cref{Conjecture: stability conjecture} is confirmed for the surfaces with no negative self-intersection
curve and the surfaces of Picard rank 2. This paper will prove \cref{Conjecture: stability conjecture} for the del Pezzo surface of Picard rank 3.
\begin{introthm}[See \cref{Theorem: stability of line bundle} and \cref{proposition: stability of shifted sheaf}]\label{Theorem in the introduction: stability of line bundle}
    The \cref{Conjecture: stability conjecture} is true for the del Pezzo surface of Picard rank $3$.
    Moreover, $C$ is either $E_1$, $E_2$, or $E$, which are the $(-1)$-curves.
\end{introthm}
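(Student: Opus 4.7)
The plan is to reduce the problem to a wall-and-chamber analysis for the trivial line bundle $\OX$, and then enumerate the possible walls using the specific geometry of $X=\Bl_{p_{1},p_{2}}\PPT$. Because tensoring with a line bundle commutes with divisorial Bridgeland stability and transforms the conjectural destabilizers $L(-C)$ and $L(C)|_{C}$ compatibly, one may assume $L=\OX$. Working in the tilted heart $\Coh^{\beta}(X)$ attached to $\sigma_{D,H}$, the first assertion becomes the existence of a maximally destabilizing subobject of $\OX$ of the form $\OX(-C)$ with $C\in\{E_{1},E_{2},E\}$, and the second asks for a maximally destabilizing quotient of $\OX[1]$ of the form $\OX(C)|_{C}$.

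For the first statement I would analyze a maximally destabilizing subobject $F\hookrightarrow\OX$ in the tilted heart. Because $\OX$ is a line bundle, the long exact cohomology sequence forces $H^{-1}(F)=0$, so $F$ is a torsion-free sheaf injecting into $\OX$, hence an ideal twist $\IC_{Z}(-D')$ with $D'$ effective and $Z$ zero-dimensional. Writing $D'=aE_{1}+bE_{2}+cE+(\text{nef part})$ in the Picard basis, the Bogomolov inequality together with the Hodge index theorem on $X$ bound the nef part and constrain the integers $a,b,c$ to a small finite set. A direct comparison of the numerical wall equations $\wallequation{v(\OX)}{v(F)}$ then shows that $F$ is either one of $\OX(-E_{1})$, $\OX(-E_{2})$, $\OX(-E)$, or produces a wall dominated by one of them.

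For the statement about $\OX[1]$, a destabilizing quotient in the tilted heart yields, after taking cohomology sheaves, a torsion sheaf $T$ supported on a curve in $X$. Positivity of the polarization $H$ against the support of $T$ and the wall equation restrict this support to lie in $\{E_{1},E_{2},E\}$. The Bridgeland stability of $\OX(E)|_{E}$ established in the same paper, extended by the Cremona-type symmetry of $X$ to $\OX(E_{i})|_{E_{i}}$, guarantees that the resulting $\OX(C)|_{C}$ is itself stable, and hence maximally destabilizing.

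The main obstacle is that for Picard rank $3$ the ample cone is genuinely three-dimensional and the effective cone of $X$ is no longer linearly ordered, so the Picard rank $2$ argument of \cite{arcara_miles_2016_bridgeland_stability_of_line_bundles_on_surfaces} does not transfer directly. Two independent $(-1)$-curves could a priori induce walls that meet inside the divisorial chamber, and the delicate point is to rule out ``mixed'' candidates such as $\OX(-E_{1}-E)$ from being maximally destabilizing. I expect this to reduce to a case analysis showing that any short exact sequence $0\to\OX(-D')\to\OX\to Q\to 0$ contributing a wall factors through a sub-triangle involving $\OX(-C)$ for a single $(-1)$-curve $C$, and that the latter wall appears no later in the chamber structure than the mixed one; this will be the technical heart of the proof.
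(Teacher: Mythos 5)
There is a genuine gap at the very first step of your analysis of subobjects of $\OX$. A monomorphism $F\hookrightarrow\OX$ in the tilted heart $\AC_{D,H}$ is \emph{not} an injection of sheaves: the long exact sequence of cohomology sheaves for $0\to F\to\OX\to Q\to 0$ gives $0\to H^{-1}(Q)\to F\to\OX\to H^{0}(Q)\to 0$, so while $F$ is indeed a torsion-free sheaf, the sheaf map $F\to\OX$ has kernel $H^{-1}(Q)$, which may be nonzero and of arbitrary rank. Consequently $F$ need not be an ideal twist $I_{Z}(-D')$; only its image in $\OX$ has that form, and $\rk F$ can be any positive integer. Your argument therefore only addresses the rank-one case. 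The paper's proof of the higher-rank case is the bulk of the work: an induction on $\rk F$ driven by Bertram's lemma (\cref{lemma: Bertram's lemma for line bundle}) and the classification of weakly destabilizing walls (\cref{lemma: Arcara Miles 5.6 not Left Hyperbola case}, \cref{Lemma: Arcara Miles 5.7 Left Hyperbola case}), combined in \cref{proposition: maximal destabilize} with a tensoring trick by $\OO(a_{1}E_{1})$ etc.\ to reduce to the inductive hypothesis. None of this appears in your proposal, and without it the claim is unproved for $\rk F\ge 2$.

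Even within the rank-one case your reduction to ``a small finite set'' of candidates via Bogomolov plus Hodge index does not work: the relevant subobjects are $\OO(-C)$ for \emph{any} effective curve $C$ with $C^{2}<0$ (e.g.\ $aE$, $aE_{1}+E_{2}$, \dots), an infinite family. The paper handles this by showing, for every such $C=-xC_{1}-yC_{2}-zE$, that the limiting wall of $\OO(-C)$ is contained in the interior of the wall of $\OO(-E)$ (when $z<0$) or in the union of the interiors of the walls of $\OO(-E_{1})$ and $\OO(-E_{2})$ (when $z\ge 0$); see \cref{proposition:rank1_z<0}, \cref{lemma:rank1_z>=0} and \cref{proposition:rank1_z>=0}. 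This is an explicit quadric-surface computation (centers, radii, discriminants, and an induction on $x+y+z$), and it is precisely the ``technical heart'' you defer to a hoped-for case analysis; the mixed candidates like $\OX(-E_{1}-E)$ are disposed of by this wall-nesting, not by a factorization of the short exact sequence. Your outline for $\OX[1]$ (torsion quotient supported on a $(-1)$-curve) is closer in spirit to the paper, which reduces it to the $\OX$ case via the duality $R\mathcal{H}om(-,\OX)[1]$ following Arcara--Miles; but as written it is a sketch rather than an argument.
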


Furthermore, a similar argument can be applied to the stability of torsion sheaves on $(-1)$-curves.
\begin{introprop}[See \cref{proposition: stability of torsion sheaf}]\label{Proposition in the introduction: stability of torsions}
    Let $E$ be a $(-1)$-curve on a del Pezzo surface $X$ of Picard rank $3$.
    Then, $\OO(E)\vert_E$ is a $\sigma_{D,H}$-stable object for any pair $(D,H)\in N^{1}(X)\times\Amp(X) $.
\end{introprop}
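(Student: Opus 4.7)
The plan is to prove stability of $\OO(E)|_E$ by classifying its potential destabilizing subobjects in the divisorial heart $\mathcal{A}_{D,H}$ and ruling them out through a direct phase comparison. Using the adjunction $E \cdot K_X = -1$ on the del Pezzo, and either Grothendieck-Riemann-Roch or the short exact sequence $0 \to \OO_X \to \OO_X(E) \to \OO(E)|_E \to 0$, one computes $\ch(i_*\OO_E(d)) = (0, E, d + \tfrac{1}{2})$, so in particular $\ch(\OO(E)|_E) = (0, E, -\tfrac{1}{2})$. As a torsion sheaf with $H \cdot E > 0$, it lies in the torsion part of $\mathcal{A}_{D,H}$.

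Suppose now that $0 \to F \to \OO(E)|_E \to G \to 0$ is a short exact sequence in $\mathcal{A}_{D,H}$ with $F$ a proper nonzero subobject. The long exact sequence of cohomology sheaves gives $\mathcal{H}^{-1}(F) = 0$, so $F$ is a coherent sheaf, and there is an exact sequence of sheaves
\[
0 \to K \to F \to \overline{F} \to 0
\]
with $K = \mathcal{H}^{-1}(G) \in \mathcal{F}_{D,H}$ and $\overline{F} \hookrightarrow \OO(E)|_E$. Since $E \cong \PP^1$ and $\OO(E)|_E \cong \OO_E(-1)$ is torsion-free of rank one on $E$, any nonzero subsheaf of $\OO(E)|_E$ is of the form $i_*\OO_E(-1-n)$ for some $n \geq 0$. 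Moreover $\overline{F}\neq 0$, since otherwise $F = K$ would lie in $\mathcal{T}_{D,H} \cap \mathcal{F}_{D,H} = 0$.

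It remains to verify the phase inequality $\phi(F) < \phi(\OO(E)|_E)$. When $K = 0$ this is immediate: $F = i_*\OO_E(-1-n)$ gives $\operatorname{Re} Z(F) = \tfrac{1}{2} + n + D\cdot E$ and $\operatorname{Im} Z(F) = H\cdot E$, so strict inequality holds precisely when $n \geq 1$, while $n = 0$ recovers the non-proper case $F = \OO(E)|_E$. When $K \neq 0$ one must combine the defining inequality $\mu^D_H(K) \leq 0$ for $K \in \mathcal{F}_{D,H}$, the Bogomolov-Gieseker inequality applied to the Harder-Narasimhan factors of $K$, and the constraint $F \in \mathcal{T}_{D,H}$. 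The main obstacle will be this last numerical step: one must exploit the intersection form on the Picard-rank-three del Pezzo, in particular the constraints from the configuration of the three $(-1)$-curves $E_1, E_2, E$, to show that no torsion-free $K$ can contribute enough $\operatorname{Re} Z$ to push $\phi(F)$ up to $\phi(\OO(E)|_E)$.
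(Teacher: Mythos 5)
Your reduction and your base case are sound and agree with the paper: you correctly compute $\ch(\OO(E)|_E)=(0,E,-\tfrac12)$, observe that a subobject $F$ in $\AC_{D,H}$ is a sheaf sitting in $0\to K\to F\to \overline{F}\to 0$ with $K=\HC^{-1}(G)\in\FC_{D,H}$ and $\overline{F}=i_*\OO_E(-1-n)$, and the case $K=0$ is exactly the paper's rank-zero lemma (\cref{Lemma: stability of torsion sheaf rank 0}), where $Z(Q)=-l$ forces $\beta(F)<\beta(\OO_E(-1))$ for proper $F$.

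However, the case $K\neq 0$ (equivalently $\rk F>0$) is the entire content of the proposition, and your proposal does not prove it: you state that one ``must combine'' the slope condition on $K$, Bogomolov--Gieseker for its Harder--Narasimhan factors, and $F\in\TC_{D,H}$, and you yourself flag this as ``the main obstacle'' without carrying it out. This is a genuine gap, and it is not a routine one: a single numerical estimate of $\Re Z(F)$ from $\mu_H(K)\le D.H$ and Bogomolov--Gieseker does not by itself bound the phase of $F$ below that of $\OO_E(-1)$ at an arbitrary divisorial stability condition, because the constraint that $F$ actually lies in $\TC_{D,H}$ with $K\in\FC_{D,H}$ involves the full Harder--Narasimhan filtrations, not just the top-level slopes and discriminants. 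The paper instead runs an induction on $\rk F$: it shows (\cref{lemma: torsion case shape of wall}) that the numerical wall $\WC^0(F,\OO_E(-1))$ in the three-dimensional slice $\SC^0_{H,G_1,G_2}$ is a hyperboloid or a cone, hence unbounded in the $s$-direction, so it necessarily meets one of the hyperplanes $s=\mu_H(F/F_{n-1})$ or $s=\mu_H(H^{-1}(Q)_1)$; Bertram's lemma for torsion sheaves (\cref{lemma: Bertram's lemma for torsion sheaves}) then produces a subobject of strictly smaller rank with strictly larger Bridgeland slope, and iterating lands in the rank-zero case, giving a contradiction. To complete your argument you would either need to supply this wall-geometry induction or actually establish the numerical inequality you postulate, and the latter is not achieved by the ingredients you list.
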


\subsection{Related and future works}\label{subsection: introduction future and related works}
In \cite{arcara_bertram_2013_bridgelandstable_moduli_spaces_for_ktrivial_surfaces}, Arcara and Bertram have shown that line bundles are Bridgeland stable with respect to some divisorial stability conditions on surfaces with Picard rank one.
However, their argument relies on techniques that cannot be extended naturally to more general cases.
Then, in \cite{arcara_miles_2016_bridgeland_stability_of_line_bundles_on_surfaces}, Arcara and Miles studied Bridgeland stability of line bundles for divisorial stability conditions by analyzing slices of the space of Bridgeland stability conditions.
As the Picard number of a surface increases, the dimension of the slices increases.
\cite{arcara_miles_2016_bridgeland_stability_of_line_bundles_on_surfaces} focuses on surfaces whose Picard rank is at most two.

From the perspective of moduli theory, Bridgeland stability conditions do not always retain the desirable properties of classical stability conditions for sheaves. A key example is the projectivity of the coarse moduli space.
The moduli spaces of Gieseker semistable sheaves are constructed as GIT quotients and, in particular, are known to be projective \cite{book_huybrechts_lehn_2010_the_geometry_of_moduli_spaces_of_sheaves}.
However, it is unclear whether the moduli spaces of divisorial Bridgeland semistable objects on any projective varieties are projective, or not.
It is a very interesting problem, especially for the Fano and the Calabi-Yau cases.
For instance, 
\cite{bayer_macri_2014_projectivity_and_birational_geometry_of_bridgeland_moduli_spaces} gives the general method, used in \cite{bayer_macri_2014_projectivity_and_birational_geometry_of_bridgeland_moduli_spaces} for K3 surface, 
\cite{yoshioka_2001_moduli_spaces_of_stable_sheaves_on_abelian_surfaces} for abelian surfaces, and \cite{nuer_yoshioka_2020_mmp_via_wallcrossing_for_moduli_spaces_of_stable_sheaves_on_an_enriques_surface} for Enriques surface. 
The quiver region strategy is used for varieties that admit a strong full exceptional collection.
Regarding del Pezzo surfaces, projectivity is verified for $\PP^2$ in \cite{arcara_bertram_coskun_2013_the_minimal_model_program_for_the_hilbert_scheme_of_points_on_bbbp2_and_bridgeland_stability}, $\PPO \times \PPO$ and $\Bl_{\{p\}}\PP^{2} \ (p \in \PP^{2})$ in \cite{arcara_miles_2017_projectivity_of_bridgeland_moduli_spaces_on_del_pezzo_surfaces_of_picard_rank_2}.
In an upcoming paper \cite{preprint_mizuno_yoshida_2025_the_projectivity_of_Bridgeland_moduli_spaces_of_the_del_Pezzo_surface_of_picard_rank_three} by the authors, they study the projectivity of moduli spaces of divisorial Bridgeland semistable objects on $\Bl_{\{p,q\}}\PP^{2} \ (p,q \in \PP^2)$.
\cref{Theorem in the introduction: stability of line bundle} and \cref{Proposition in the introduction: stability of torsions} are indispensable for the proof of the main theorem in the upcoming paper \cite{preprint_mizuno_yoshida_2025_the_projectivity_of_Bridgeland_moduli_spaces_of_the_del_Pezzo_surface_of_picard_rank_three}.

\begin{Out}
    In \cref{section: preliminaries}, we review the basic notions and some results of Bridgeland stability conditions. Especially, consider Bridgeland stability conditions on a smooth projective surface, stability function, and walls between objects in an abelian category $\AC_{D, H}$. 
    In \cref{section: Quiver region and strategy of the proof}, we fix the notation considering the space of Bridgeland stability conditions of the surface of Picard rank three.
    In addition, the slice of the $\divstab(X)$ will be introduced.
    \par
    In \cref{section: stability}, we will discuss the $\sigma$-stability of line bundles for $\sigma\in\divstab(X)$. Especially, we prove \cref{Theorem in the introduction: stability of line bundle}. Arcara and Miles observed in \cite{arcara_miles_2016_bridgeland_stability_of_line_bundles_on_surfaces} that the stability of line bundles is attributed to the case of the structure sheaf (see \cref{Lemma: Arcara Miles 3.1}).
    Thus, we will discuss the divisorial stability of $\OO$ and $\OO[1]$. 
    In \cref{section: stability of torsion sheaves}, we will show \cref{Proposition in the introduction: stability of torsions}.
    Both proofs proceed by induction on the rank of subobjects and Bertram's lemma (see \cref{lemma: Bertram's lemma for line bundle}, \cref{lemma: Bertram's lemma for torsion sheaves}, and its consequences) fulfills a role to run the induction step.
\end{Out}

\begin{NoCon}
We define some notations: 
    \begin{itemize}
        \item $X$ is a smooth projective variety over $\CB$, 
        \item $D^b(X)\coloneqq D^b(\Coh(X))$ the bounded derived category of coherent sheaves on $X$, and
        \item $\Stab(X)$ the space of stability conditions associated with the numerical Grothendieck group $K_{\mathrm{num}}(X)$. 
    \end{itemize}
\end{NoCon}

\begin{Ack}
    The authors are grateful to Prof.Hajime Kaji and the second author's advisor Prof.Yasunari Nagai for their continuous encouragement and fruitful discussions.
    During the preparation of this article, the first author was partially supported by the Grant-in-Aid for
JSPS Fellows (Grant Number 22J11405) and Grant-in-Aid for Research Activity Start-up (Grant Number 24K22841), and the second author was supported by JST SPRING, Grant Number JPMJSP2128.
\end{Ack}
\section{Preliminaries on Bridgeland Stability Conditions}\label{section: preliminaries}
This \cref{section: preliminaries} recollects some facts 
about derived categories and Bridgeland stability conditions especially on surfaces. 
\subsection{Bridgeland Stability Conditions}\label{subsection: Bridgeland Stability Conditions}

\begin{dfn}\label{definition: stability functions on abelian categories}
        A \emph{pre-stability function} on an abelian category $\AC$ is a group homomorphism 
        $Z: K_{\mathrm{num}}(\AC) \to \CB$, where $K_{\mathrm{num}}(\AC)$ is the numerical Grothendieck group of $\AC$ such that for all $A\in\AC\setminus0$, $Z(A)\in \HB^{+} (= \HB \cup \RB_{<0})$. 

        A \emph{stability function} $Z$ on $\AC$ is a pre-stability function on $\AC$ which satisfies the Harder-Narasimhan property.
\end{dfn}

\begin{dfn}\label{definition: stability conditions on triangulated categories}
    Let $\DC$ be a triangulated category. A \emph{Bridgeland stability condition} $\sigma$ on $\DC$ is a pair $(\AC, Z)$, where $\AC\subset \DC$ is an abelian category and $Z$ is a stability function on $\AC$.

    Denote the space of Bridgeland stability conditions on $\DC$ by $\Stab(\DC)$.
    Indeed, Bridgeland's deformation property (\cite{bridgeland_2009_spaces_of_stability_conditions}) says it admits a complex manifold structure. 
    Also, for a smooth projective variety $X$, write $\Stab(X)\coloneqq\Stab(D^b(X))$ for simplicity.
\end{dfn}

\begin{dfn}
    Let $\DC$ be a triangulated category and $\PC=\{\PC(\phi)\}_{\phi\in\RB}$ be a family of subcategories in $\DC$.
    $\PC$ is called \emph{slicing} if 
    \begin{itemize}
        \item for any $\phi\in\RB$, $\PC(\phi+1) = \PC(\phi)[1]$, 
        \item if $\phi_1 > \phi_2$ and $E_i\in\PC(\phi_i)$, $\Hom(E_1, E_2) =0$, and
        \item for any $E\in\DC$, there are integer $n\in\ZZ_{>0}$, 
        real numbers $\phi_1 > \phi_2 > \dots >\phi_n$, and distinguished triangles
\begin{equation}
\begin{tikzcd}
0=E_0 \arrow[rr] & & E_1 \arrow[ld] \arrow[r] & \cdots \arrow[r] 
& E_{n-1} \arrow[rr] & & E_n=E \arrow[ld] \\
& F_1 \arrow[lu, "{[1]}", dotted] & & & & F_n \arrow[lu, "{[1]}", dotted] &                 
\end{tikzcd}
\notag
\end{equation}
such that $F_i\in\PC(\phi_{i})$.
    \end{itemize}
\end{dfn}

\begin{rem}
    Giving a Bridgeland stability condition $\sigma = (Z, \AC)\in\Stab(\DC)$ is equivalent to 
    giving a pair $(Z, \PC)$ of stability function $Z: K(\DC)\to \CB$ and slicing such that $Z(E)\in\RB_{>0}\exp(i\pi\phi)$ for any $\phi\in\RB$ and $0\neq E\in\PC(\phi)$.
\end{rem}

\begin{dfn}\label{Definition: Geometric Stability Condition}
    Stability condition $\sigma$ is called \emph{geometric} 
    if all the skyscraper sheaves $\OO_{x}$ are stable of the same phase.
\end{dfn}
\begin{dfn}\label{Definition: Bridgeland slope function}
    For $\sigma = (Z,\AC)\in\Stab(X)$ and $F\in\AC$, 
    \[
    \beta(F) \coloneqq -\frac{\Re Z(F)}{\Im Z(F)} \in (-\infty, \infty]
    \]
    is called \emph{Bridgeland slope function}. 
\end{dfn}
\begin{dfn}\label{Definition: Briegeland stable object}
    For a Bridgeland stability condition $\sigma = (Z, \AC)$, an object $A\in \AC$ is $\sigma$-stable (resp. $\sigma$-semistable) if
    $\beta(B)<\beta(A)$ (resp. $\beta(B)\le\beta(A)$) for any subobject $B\subset A$.
    An object $F$ of $D^{b}(X)$ is $\sigma$-stable (resp. $\sigma$-semistable) if
    $F[k]\in\AC$ for some $k\in\ZZ$ and $F[k]$ is $\sigma$-stable (resp. $\sigma$-semistable) as an object of $\AC$.
\end{dfn}

\subsubsection{Stability conditions on Surfaces}\label{subsubsection: Stability conditions on surfaces}

This \cref{subsubsection: Stability conditions on surfaces} devotes to reviewing the definition and properties of \emph{divisorial} Bridgeland stability conditions and the moduli space of semistable objects.

\begin{dfn}[{\cite[Definition 1.9.]{macri_2007_stability_conditions_on_curves}}]\label{definition: bogomolov class}
    The class $v$ is said to be a \emph{Bogomolov class} if $v\in K_{\mathrm{num}}(X)$ satisfies the Bogomolov inequality:
    \[
    c_1(v)^2\ge 2r(v)\ch_2(v).
    \]
\end{dfn}

\begin{dfn}\label{definition: divisorial stability condition}
    Let $X$ be a smooth projective surface and $(D, H)\in N^{1}(X)\times\Amp(X)$ be a pair of divisors. Define
    \begin{itemize}
    \item an abelian category $\AC_{D, H}$ by 
    \[
    \mathcal{A}_{D,H} = \{A \in D^{b}(X)\ \mid\ H^{i}(A)= 0\ \text{for}\ i\neq -1,0,\ 
    H^{-1}(A)\in\mathcal{F}_{D,H}, H^{0}(A)\in \mathcal{T}_{D,H}
    \},
    \]
     where the pair $(\mathcal{F}_{D,H},\mathcal{T}_{D,H})$ is 
     defined by 
    \[
    \mathcal{F}_{D,H} = \{ E\in \Coh(X)\ \mid\ E:\text{torsion free and}\ \forall0\neq F\subset E: \mu_{H}(F)\le D.H\},\text{and}
    \]
    \[
    \mathcal{T}_{D,H} = \{ E\in \Coh(X)\ \mid\ \forall E\twoheadrightarrow F\neq 0 
    \text{ torsion free}:\ \mu_{H}(F) > D.H\}, 
    \]
    where $\mu_{H}(F) = \frac{c_1(F).H}{\rk(F)}$ is the slope function, and 
    \item a function $Z_{D,H}: K_{\mathrm{num}}(\AC_{D, H}) \to \CB$ by 
    \[
    \displaystyle Z_{D,H}(F) = -\int \exp(-(D+iH))\ch(F). 
    \]

    \end{itemize}
\end{dfn}

\begin{prop}\label{proposition: Divisorial stability condition}
    Let $X$ be a smooth projective surface. 
    There exists a continuous embedding 
    \begin{align*}
        N^{1}(X)\times\Amp(X)&\longrightarrow \Stab(X)\\
        (D, H)\hspace{10mm}&\longmapsto\sigma_{D,H} = (Z_{D,H}, \mathcal{A}_{D,H}) 
    \end{align*}
\end{prop}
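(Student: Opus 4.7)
The plan is to verify the axioms of a Bridgeland stability condition in four stages: (i) $\AC_{D,H}$ is the heart of a bounded t-structure; (ii) $Z_{D,H}$ is a pre-stability function on $\AC_{D,H}$; (iii) the Harder-Narasimhan and support properties hold; (iv) the map $(D,H)\mapsto\sigma_{D,H}$ is continuous and injective.

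First I would check that $(\FC_{D,H}, \TC_{D,H})$ is a torsion pair on $\Coh(X)$. The vanishing $\Hom(\TC_{D,H}, \FC_{D,H}) = 0$ follows since any nonzero map $T \to F$ with $T \in \TC_{D,H}$ and $F$ torsion-free in $\FC_{D,H}$ would produce a nonzero torsion-free quotient of $T$ of slope $> D.H$ embedding into $F$, violating the slope bound on subsheaves of $F$. Existence of the canonical sequence $0 \to F \to E \to T \to 0$ for any $E \in \Coh(X)$ comes from truncating the $\mu_H$-Harder-Narasimhan filtration of $E/\mathrm{tors}(E)$ at slope $D.H$. Applying Happel-Reiten-Smalo tilting, $\AC_{D,H}$ is then the heart of a bounded t-structure, hence abelian.

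Second, a direct expansion of the formula for $Z_{D,H}$ gives
\begin{align*}
\Im Z_{D,H}(E) &= H.c_1(E) - \rk(E)\,H.D,\\
\Re Z_{D,H}(E) &= -\ch_2(E) + D.c_1(E) - \tfrac{\rk(E)}{2}(D^2 - H^2).
\end{align*}
To check $Z_{D,H}(A) \in \HB^+$ for $0 \neq A \in \AC_{D,H}$, I reduce to $T \in \TC_{D,H}$ and $F[1]$ with $F \in \FC_{D,H}$. For $T$ the defining bound $\mu_H > D.H$ on torsion-free quotients yields $\Im Z \geq 0$, with equality forcing $T$ to be zero-dimensional torsion, in which case $\Re Z = -\ch_2(T) < 0$. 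For $F[1]$ the slope bound $\mu_H(F) \leq D.H$ gives $\Im Z_{D,H}(F[1]) \geq 0$; in the equality case $\mu_H(F) = D.H$ the sheaf $F$ is automatically $\mu_H$-semistable, and one shows $\Re Z_{D,H}(F[1]) < 0$ by combining the Bogomolov inequality $c_1(F)^2 \geq 2\rk(F)\ch_2(F)$ with the Hodge index estimate $(c_1(F) - \rk(F)D)^2 \leq 0$, which holds because $(c_1(F) - \rk(F)D).H = 0$ by the slope hypothesis; completing the square then gives $\Re Z_{D,H}(F) \geq \tfrac{\rk(F)}{2}H^2 > 0$.

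Third, for the Harder-Narasimhan and support properties I would follow the Bridgeland-Arcara-Bertram paradigm: establish the support property with respect to a quadratic form built from the Bogomolov discriminant $\Delta = c_1^2 - 2\rk\,\ch_2$, using Bogomolov on $\mu_H$-semistable sheaves and Hodge index on $H$. Existence of HN filtrations then follows from the standard argument that a pre-stability function satisfying the support property on the finitely generated lattice $K_{\mathrm{num}}(X)$ admits HN decompositions. This step is the main obstacle, as one must control destabilizing subobjects of $A \in \AC_{D,H}$ whose cohomology mixes pieces of $H^{-1}(A)$ and $H^0(A)$, and verify that their classes lie in a discrete set of bounded discriminant; this in turn is what makes the map into $\Stab(X)$ land in the genuine stability manifold rather than only in the space of pre-stability functions.

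Finally, continuity of $(D,H) \mapsto Z_{D,H}$ into $\Hom(K_{\mathrm{num}}(X), \CB)$ is immediate from the polynomial formula above, and continuity of $(D,H) \mapsto \sigma_{D,H}$ then follows from the fact that the forgetful map $\Stab(X) \to \Hom(K_{\mathrm{num}}(X), \CB)$ is a local homeomorphism. Injectivity is visible directly from the formula: testing $Z_{D,H}$ against classes of the form $(r, C, s)$ recovers $H$ from the imaginary part (with sign pinned down by ampleness) and then $D$ from the linear coefficient in $C$, so distinct pairs $(D,H)$ give distinct central charges, and a fortiori distinct stability conditions.
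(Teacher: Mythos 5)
The paper does not actually prove this proposition: it is stated as a known result and used as a black box (this is the standard construction of Bridgeland for K3 surfaces and of Arcara--Bertram for general surfaces, also treated in Macr\`i--Schmidt's lectures). Your outline reproduces exactly that standard proof, and the parts you spell out are correct: the torsion-pair/HRS-tilting step, and in particular the positivity check in the boundary case $\Im Z_{D,H}(F)=0$ via Bogomolov plus Hodge index applied to $(c_1(F)-\rk(F)D)$, which is precisely the classical argument.

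Two steps are thinner than they should be. First, ``a pre-stability function satisfying the support property admits HN decompositions'' is not a theorem as stated; the support property controls walls and deformations but does not by itself produce HN filtrations. The standard route is: for \emph{rational} $(D,H)$ one gets the HN property from noetherianity of the tilted heart together with discreteness of the image of $\Im Z_{D,H}$, and the general real case is then obtained by deforming from the dense set of rational points using the support property (or by Toda-type arguments). Your sketch has the logical order reversed. Second, continuity of $(D,H)\mapsto\sigma_{D,H}$ does not follow formally from continuity of $(D,H)\mapsto Z_{D,H}$ plus the local homeomorphism $\Stab(X)\to\Hom(K_{\mathrm{num}}(X),\CB)$: that only gives continuity once you know $\sigma_{D',H'}$ is the stability condition lying on the same sheet of the local homeomorphism as $\sigma_{D,H}$, i.e.\ that the family does not jump. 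The usual fix is to observe that all the $\sigma_{D,H}$ are geometric with the skyscraper sheaves stable of phase $1$, which pins down the heart and rules out jumping; alternatively one checks directly that the hearts $\AC_{D,H}$ and the HN filtrations vary suitably. Neither issue is a wrong idea, but both need to be filled in for the argument to close.
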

    We denote the image of this embedding by 
    $\divstab(X)$.

\begin{lem}[{\cite[Lemma 3.3.]{lo_martinez_2023_geometric_stability_conditions_under_autoequivalences_and_applications_elliptic_surfaces}}]
    Let $\sigma$ be a stability condition in the image of the map in \cref{proposition: Divisorial stability condition}.
    Then, $\sigma$ is geometric. 
\end{lem}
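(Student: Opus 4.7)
The plan is to show that $\OO_x\in\AC_{D,H}$ for every closed point $x$, that $Z_{D,H}(\OO_x)$ is the same (negative real) number independent of $x$, and that $\OO_x$ is $\sigma_{D,H}$-stable. The first two claims force a common phase $1$, so the real content lies in the stability assertion.

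First, from $\ch(\OO_x)=(0,0,[\mathrm{pt}])$ only the degree-$0$ term of $\exp(-(D+iH))$ contributes, so $Z_{D,H}(\OO_x)=-1$, independent of $x$. Moreover $\OO_x$ is a torsion sheaf and has no nonzero torsion-free quotient, so it sits vacuously in $\TC_{D,H}\subset\AC_{D,H}$. Its phase is therefore $1$, and $\beta(\OO_x)=+\infty$.

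For stability, I would take a subobject $0\neq B\subsetneq \OO_x$ in $\AC_{D,H}$ with cokernel $Q$. The long exact sequence of cohomology sheaves, together with $H^i(\OO_x)=0$ for $i\neq 0$, forces $H^{-1}(B)=0$ (so $B$ is an honest sheaf) and yields a four-term exact sequence
\begin{equation*}
0\to K\to B\to \OO_x\to H^0(Q)\to 0,\qquad K:=H^{-1}(Q)\in\FC_{D,H},
\end{equation*}
with $H^0(Q)\in\TC_{D,H}$. Since $\OO_x$ is simple as a coherent sheaf, the image of $B\to\OO_x$ is either $0$ or $\OO_x$. The first case would give $B\cong K\in\FC_{D,H}\cap\TC_{D,H}=\{0\}$, contradicting $B\neq 0$; in the second I obtain a short exact sequence of sheaves $0\to K\to B\to \OO_x\to 0$ with $K$ torsion-free and nonzero (because $B\neq\OO_x$).

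The heart of the argument is to upgrade the automatic inequality $\Im Z_{D,H}(B)\ge 0$ to a strict one. A direct Chern-character computation gives
\begin{equation*}
\Im Z_{D,H}(B)=H.c_1(K)-\rk(K)(H.D)=\rk(K)\bigl(\mu_H(K)-H.D\bigr),
\end{equation*}
and $K\in\FC_{D,H}$ yields $\mu_H(K)\le H.D$, so equality would force $\mu_H(K)=H.D$. I would rule this out by examining the torsion subsheaf $T\subset B$: since $K$ is torsion-free, $T$ embeds into $\OO_x$, hence $T=0$ or $T\cong\OO_x$. If $T=0$ then $B$ itself is a torsion-free quotient of $B$, so $B\in\TC_{D,H}$ forces $\mu_H(B)>H.D$; but $c_1(B)=c_1(K)$ and $\rk(B)=\rk(K)$ give $\mu_H(B)=\mu_H(K)=H.D$, a contradiction. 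If $T\cong\OO_x$ then the sequence splits and $B\twoheadrightarrow K$ is a nonzero torsion-free quotient with $\mu_H(K)=H.D$, again contradicting $B\in\TC_{D,H}$. Hence $\Im Z_{D,H}(B)>0$, $\beta(B)<+\infty=\beta(\OO_x)$, and $\OO_x$ is $\sigma_{D,H}$-stable. The only genuinely delicate point is this torsion case-analysis; the rest is a routine unwinding of the definitions of $\AC_{D,H}$, $\TC_{D,H}$, and $\FC_{D,H}$.
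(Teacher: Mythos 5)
Your argument is correct, and in fact the paper offers no proof of this lemma at all --- it is quoted verbatim from Lo--Mart\'inez --- so there is nothing in the source to compare against beyond the citation. What you write is the standard self-contained argument: $Z_{D,H}(\OO_x)=-1$ for every $x$, $\OO_x\in\TC_{D,H}\subset\AC_{D,H}$ vacuously, and a proper nonzero subobject $B$ is forced to be a sheaf in $\TC_{D,H}$ sitting in $0\to K\to B\to\OO_x\to 0$ with $0\neq K\in\FC_{D,H}$, which your torsion-subsheaf case analysis correctly kills. One small wrinkle in your framing: since $\rk(K)>0$ and $\mu_H(K)\le D.H$, your own identity gives $\Im Z_{D,H}(B)=\rk(K)\bigl(\mu_H(K)-D.H\bigr)\le 0$, so combined with the automatic $\Im Z_{D,H}(B)\ge 0$ the equality $\mu_H(K)=D.H$ is not merely a degenerate possibility to be excluded but is forced; your contradiction therefore shows that no such $B$ exists at all, rather than that $\Im Z_{D,H}(B)>0$. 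The final sentence should be restated as ``$\OO_x$ has no proper nonzero subobjects in $\AC_{D,H}$, hence is stable,'' but this is a presentational point and does not affect the validity of the proof.
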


\subsubsection{Walls in the Spaces of the Bridgeland Stability Conditions}\label{subsubsection: Wall in spaces of BSC}
The \emph{wall} in the space $\Stab(X)$ of Bridgeland stability conditions on $X$  is 
defined as a stability condition that has a semistable object with  
that is, if $\sigma$ varies in $\Stab(X)$, the moduli space of $\sigma$-semistable objects
changes. 
\begin{dfn}[Walls in the Space of Bridgeland Stability Conditions]
For $F,F'$ in $D^b(X)$, 
the equation of the wall between $F$ and $E$ is defined by  
    \begin{equation}
        \wallequation{F}{F'} \coloneqq -\Re(Z_{D,H}(F))\Im(Z_{D,H}(F'))+\Re(Z_{D,H}(F'))\Im(Z_{D,H}(F)). \notag
    \end{equation}
    Also, the wall is defined by 
    \[
    \wall{F}{F'} \coloneqq \{\sigma\in\Stab(X) \vert \wallequation{F}{F'}=0\}.
    \]
\end{dfn}

\begin{dfn}\label{definition: weakly destabilizing wall and actually destabilizing wall}
    Let $B$ be an object of $D^b(X)$ such that $B$ is $\sigma$-semistable for at least one $\sigma \in \Stab(X)$ and $F$ be an object in $D^b(X)$.
    We define two notions between the walls as follows: 
    \begin{itemize}
        \item the wall $\wall{F}{B}$ is said to be \emph{weakly destabilizing} for $F$ if $F \subset B$ at some $\sigma\in\wall{F}{B}$, and
        \item the wall $\wall{F}{B}$ is said to be \emph{actually destabilizing} for $F$ if $F \subset B$ at some $\sigma\in\wall{F}{B}$ and $B$ is $\sigma$-semistable.
    \end{itemize}
\end{dfn}

\subsection{Some Geometries of del Pezzo Surfaces}\label{subsection: the geometry of del pezzo}
In this subsection, we recollect some geometrical properties of a del Pezzo surface $X$ of Picard rank $3$.
Firstly, $X$ is isomorphic to the blow-up of $Q$ at a point, here 
$Q$ is a quadric surface $\mathbb{P}^{1}\times\mathbb{P}^{1}\subset \mathbb{P}^{3}$ 
via the Segre embedding. \par
Let $H$ be a hyperplane in $\PPT$, and $E_{1}$ and $E_{2}$ be $(-1)$-curve associated to blow-ups of $\mathbb{P}^{2}$. 
Then, $E\coloneqq H-E_{1}-E_{2}$ is also $(-1)$-curve in $X$, 
and $\pi: X \to Q = \PPO\times\PPO$ is the blow-down by this curve.  
Next, we define some notations of divisors in this view. 
When we write $C_{1} = H-E_{1} = \pi^{\ast}\OO(1,0)$,
and $C_{2} = H-E_{2} = \pi^{\ast}\OO(0,1)$, 
we can represent $\Pic(X) = \ZZ[C_{1}]\oplus\ZZ[C_{2}]\oplus\ZZ[E]$. 
They have these relations and intersection numbers: 
\begin{align*}
    H = C_1+C_2-E,&\  E_1 = C_2-E,\  E_2 = C_1-E, \\
    C_{1}^{2} = C_{2}^{2} = 0,\  C_{1}.C_{2}=1,&\  E^{2} = -1,\  \text{and}\  C_{1}.E=C_{2}.E = 0.
\end{align*}
Notice that the divisor $D = aC_{1} + bC_{2} + cE$ is ample if and only if $a > 0, b>0, a+c>0, \text{and~} b+c>0$ by Nakai-Moishezon criterion. 

(The ample cone is generated by $H, C_1, C_2$. $ E, E_1, E_2$ generate the effective cone. (cf. \cite[Section 3]{preprint_junyan_2022_moduli_spaces_of_sheaves_on_general_blowups_of_mathbbp2}))

\begin{prop}\label{Proposition: chi of X}
\begin{equation}
    \chi(\OX(aC_{1}+bC_{2}+cE)) = (a+1)(b+1)-\frac{1}{2}c(c-1)\label{equation: HRR for X}
\end{equation}
\end{prop}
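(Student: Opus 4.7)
The plan is to apply Hirzebruch--Riemann--Roch (HRR) for line bundles on a smooth projective surface, namely
\[
\chi(\OX(D)) \;=\; \chi(\OX) + \tfrac{1}{2} D.(D - K_X),
\]
and reduce everything to the intersection numbers on $X$ that are already recorded in the excerpt. Since $X$ is a rational surface (it is a blow-up of $\PP^2$, or equivalently of $\PP^1\times\PP^1$), we have $\chi(\OX) = 1$, so the whole task is to evaluate the self-intersection $D^2$ and the intersection $D.K_X$ for $D = aC_1 + bC_2 + cE$.

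First I would record the canonical class in the basis $(C_1, C_2, E)$. Viewing $X$ as $\Bl_{\{p,q\}}\PPT$ gives $K_X = -3H + E_1 + E_2$; substituting the relations $H = C_1 + C_2 - E$, $E_1 = C_2 - E$, and $E_2 = C_1 - E$ already listed above produces
\[
K_X \;=\; -2C_1 - 2C_2 + E.
\]
(As a sanity check, this is also $\pi^\ast K_{\PPO\times\PPO} + E$ via the blow-down $\pi : X \to \PPO\times\PPO$ along the $(-1)$-curve $E$.)

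Next I would compute the two intersection numbers using the pairing data $C_1^2 = C_2^2 = 0$, $C_1.C_2 = 1$, $E^2 = -1$, $C_1.E = C_2.E = 0$. A direct expansion gives
\[
D^{2} \;=\; 2ab - c^{2},
\qquad
D.K_{X} \;=\; -2a - 2b - c,
\]
hence
\[
\tfrac{1}{2} D.(D-K_X) \;=\; ab + a + b - \tfrac{1}{2} c(c-1).
\]
Adding $\chi(\OX) = 1$ and factoring the $a,b$-part as $(a+1)(b+1)$ yields exactly the stated formula.

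The proof is essentially a bookkeeping computation, so there is no real obstacle; the only thing to be careful about is keeping the sign of $E^2 = -1$ straight when computing $D.K_X$ (it is the reason why the $c$-contribution ends up as $-\tfrac{1}{2}c(c-1)$ rather than, say, $-\tfrac{1}{2}c(c+1)$). If one prefers, the identical computation can be carried out in the basis $(H, E_1, E_2)$ via HRR on $\Bl_{\{p,q\}}\PPT$ and then translated back via the substitutions above; both routes agree, which gives a free consistency check.
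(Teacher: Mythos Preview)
Your proof is correct and follows exactly the approach indicated by the paper, which simply states that the formula is a consequence of the Hirzebruch--Riemann--Roch theorem; you have merely spelled out the intersection-theoretic bookkeeping that the paper leaves implicit.
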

\begin{proof}
    It is just a consequence of the Hirzebruch-Riemann-Roch Theorem.
\end{proof}

\section{Divisorial Stability Conditions on Surface of Picard Rank Three}\label{section: Quiver region and strategy of the proof}
Let $X$ be a smooth projective surface and $\sigma_{D,H} = (Z_{D,H}, \AC_{D,H}) \in \divstab(X)$
a geometric stability condition on $X$. 
For an object $F\in D^{b}(X)$, 
$ \displaystyle Z_{D,H}(F) = -\int \exp(-(D+iH))\ch(F)$ can be written more explicitly as follows:  
$$Z_{D,H}(F) = \Bigl(-\ch_{2}(F)+c_{1}(F).D-\frac{\rk(F)}{2}(D^{2}-H^{2})\Bigr)
    + i\bigl(c_{1}(F).H-\rk(F)D.H\bigr).$$
We assume $X$ is the blow-up of $\PPT$ at $p,q$. 
When $F \simeq \OO(\alpha C_1+ \beta C_2 +\gamma E)$, $D = xC_1+yC_2+zE$, and $H = aC_1+bC_2+cE$, then 
\begin{align*}
    Z_{D,tH}(F) &= \{(-\alpha\beta+\frac{1}{2}\gamma^2 + \alpha y+\beta x-\gamma z) - \frac{1}{2}(2xy-z^2-t^2(2ab-c^2))\}\\
    &+it\{(b\alpha +a\beta -c \gamma)+( -bx-ay+cz)\}.
\end{align*}

\subsection{Spaces of Bridgeland Stability Consitions}\label{subsection: stability: settings and notations}


Let $X$ be a smooth projective surface.
Let $H$ be an ample $\RB$--divisor and $G$ be an $\RB$--divisor such that $H.G=0$ and $H^2=-G^2=1$.
Define $\SC_{H,G}$ and $\SC_{H,G}^0$ to be 
\begin{align*}
    \SC_{H,G}&\coloneqq \left\{\sigma_{sH+uG,tH} \mid s,u \in \RB, t>0 \right\} \cong \{(s,u,t) \in \RB^3 \mid t>0 \}, \text{and} \\
    \SC_{H,G}^0 &\coloneqq \lim_{t_0 \to 0} \big(\SC_{H,G} \cap \{\sigma_{sH+uG,tH} \in  \SC_{H,G} \mid t=t_0\}\big)  \cong\{(s,u) \mid s,u \in \RB\}.
\end{align*}

Let $F,F' \in D^b(X)$, and $u_0 \in \RB$. We denote 
\begin{align*}
&\WC_{H,G}(F, F') \coloneqq \wall{F}{F'} \cap  \SC_{H,G},\\
&\WC_{H,G}^0(F, F') \coloneqq \lim_{t_0 \to 0} \big(\WC_{H,G}(F, F') \cap \{\sigma_{sH+uG,tH} \in  \SC_{H,G} \mid t=t_0\}\big) \subset \SC_{H,G}^0,\  \text{and} \\
&\Pi_{H,G}(u_0) \coloneqq \{\sigma_{sH+uG,tH} \in  \SC_{H,G} \mid u=u_0 \}.
\end{align*}

When the divisors $H$ and $G$ are clear from the context, 
we simply write $\Pi_{u_0}$ instead of $\Pi_{H,G}(u_0)$.

The following theorem is crucial for our study:

\begin{thm}[Bertram's Nested Wall Theorem {\cite[Proposition 2.6., Theorem 3.1.]{maciocia_2014_computing_the_walls_associated_to_bridgeland_stability_conditions_on_projective_surfaces}}]\label{Theorem: Bertram nested wall theorem}
    Let $X$ be a smooth projective surface.
    Let $H,G$ be divisors with the above conditions, $u_0 \in \RB$, 
    and $B \in D^b(X)$.
    Then, the walls for $B$ in $\Pi_{u_0}$ are nested semicircles, i.e. the followings hold: 
    \begin{enumerate}
    \item for any $F \in D^b(X)$, $\WC_{H,G}(F, B) \cap \Pi_{u_0}$ is a semicircle whose center is on $s$-axis, and
    \item  for any $F_1, F_2 \in D^b(X)$, if $\WC_{H,G}(F_1, B) \cap \Pi_{u_0} \neq \WC_{H,G}(F_2, B) \cap \Pi_{u_0}$, then either the semicircle $\WC_{H,G}(F_1, B) \cap \Pi_{u_0}$ is inside the semicircle $\WC_{H,G}(F_2, B) \cap \Pi_{u_0}$ with no intersections or vice versa.
    \end{enumerate}

\end{thm}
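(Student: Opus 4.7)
The plan is to compute the wall equation $\wallequation{F}{B}=0$ explicitly in the $(s,t)$-slice $\Pi_{u_0}$, observe that it cuts out a circle centred on the $s$-axis, and then show that the family of all such walls for a fixed $B$ belongs to a single pencil of coaxial circles whose pair of base points either lies on the $s$-axis or is a complex-conjugate pair off the real plane; this forces the nesting in the open upper half-plane $\{t>0\}$.

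First, I would substitute $D=sH+uG$ (with $u=u_0$) and ample class $tH$ into the central charge formula from \cref{definition: divisorial stability condition} to obtain
\begin{align*}
\Re Z_{\sigma}(F) &= -\ch_2(F) + s\,(c_1(F).H) + u_0\,(c_1(F).G) - \tfrac{\rk F}{2}\,(s^2 - u_0^2 - t^2),\\
\Im Z_{\sigma}(F) &= t\,\bigl(c_1(F).H - s\,\rk F\bigr),
\end{align*}
together with the analogous formulas for $B$. Plugging into the bilinear form $\wallequation{F}{B}$, factoring out the nonzero $t$, and collecting like powers of $s$ and $t$, I expect
\[
\wallequation{F}{B}/t \;=\; \alpha\,(s^2+t^2) \;+\; \beta\,s \;+\; \gamma,
\]
where $\alpha,\beta,\gamma$ are polynomials in $u_0$ and the Chern characters of $F$ and $B$. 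When $\alpha\neq 0$ this cuts out a circle centred on the $s$-axis, whose intersection with $\{t>0\}$ is a semicircle, proving (1); the degenerate case $\alpha=0$ yields a vertical line, which one views as a circle of infinite radius.

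For (2), I would use a pivotal-point argument. Fix $B$ and note that $(\alpha,\beta,\gamma)$ is \emph{linear} in the Chern character $v(F)$ of $F$. Demanding that $\alpha(s_\ast^2+t_\ast^2)+\beta s_\ast +\gamma$ vanish identically in $v(F)$ gives a linear system whose unique solution (when $\rk B\neq 0$) is the \emph{pivotal point}
\[
s_\ast \;=\; \frac{c_1(B).H}{\rk B}, \qquad t_\ast^{\,2} \;=\; \frac{2\,\bigl(\ch_2(B) - u_0\,c_1(B).G\bigr)}{\rk B} - s_\ast^{\,2} - u_0^{\,2}.
\]
Consequently every wall $\WC_{H,G}(F,B)\cap\Pi_{u_0}$ passes through $(s_\ast,\pm t_\ast)$, so all such walls belong to a single pencil of coaxial circles with these base points. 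The crucial inequality is $t_\ast^{\,2}\leq 0$. Writing $c_1(B)=PH-QG+\eta$ with $\eta\perp H, G$ (so $P=c_1(B).H$ and $Q=c_1(B).G$), the Hodge index theorem applied to the ample $H$ forces $\eta^2\leq 0$, whence $c_1(B)^2\leq P^2-Q^2$. Combining with the Bogomolov inequality $2\rk(B)\ch_2(B)\leq c_1(B)^2$ (which holds, e.g., when $B$ is semistable at some stability condition, the situation of interest), a short calculation gives
\[
(\rk B)^2\,t_\ast^{\,2} \;\leq\; -\bigl(Q+\rk(B)\,u_0\bigr)^2 \;\leq\; 0.
\]
Hence the two base points are either a conjugate pair off the real plane or a single real point on the $s$-axis; in either case, a standard property of pencils of coaxial circles prevents two distinct members from meeting in the open upper half-plane $\{t>0\}$, which is precisely the nested semicircle assertion.

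The main obstacle I anticipate is the pivotal-point bookkeeping: one must write $(\alpha,\beta,\gamma)$ as an explicit linear form in the four coordinates of $v(F)$ and invert the resulting system for $(s_\ast, t_\ast^{\,2})$. Degenerate cases ($\rk B = 0$, $\alpha=0$, or the pencil collapsing when Bogomolov is attained with equality and $Q+\rk(B)u_0=0$) require separate but routine treatment. The Hodge/Bogomolov step itself is short once the orthogonal decomposition of $c_1(B)$ is written down.
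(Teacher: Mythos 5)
The paper itself offers no proof of this theorem --- it is imported wholesale from Maciocia --- so your argument can only be compared with that source, and it does follow the standard route: in the slice $\Pi_{u_0}$ the wall equation divided by $t$ has the form $\alpha(s^2+t^2)+\beta s+\gamma=0$ with $(\alpha,\beta,\gamma)$ linear in $v(F)$, all walls pass through the common zeros of $Z_{sH+u_0G,tH}(B)$, and Hodge index plus Bogomolov force those base points to satisfy $t_*^2\le 0$. Your computations check out: the real and imaginary parts of the central charge, the pivotal point $s_*=c_1(B).H/\rk B$, and the estimate $(\rk B)^2t_*^2\le -\bigl(c_1(B).G+\rk(B)\,u_0\bigr)^2$ are all correct. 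You are also right to flag that the Bogomolov inequality for $B$ is genuinely needed: as printed the theorem allows arbitrary $B\in D^b(X)$, for which $t_*^2$ can be positive and distinct walls really do cross in $\{t>0\}$; Maciocia assumes a Bogomolov class, and every $B$ to which this paper applies the theorem ($\OO$, line bundles, $\OO_E(-1)$) satisfies it, so this is an imprecision in the statement rather than in your proof.

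The genuine gap is the final inference. What the coaxial-pencil argument proves is that two distinct walls are \emph{disjoint} in $\{t>0\}$; it does not prove they are \emph{nested}, and ``disjoint circles centred on a common line'' does not imply ``one inside the other''. When $t_*^2\le 0$ the real members of the pencil split into two families lying on opposite sides of the radical axis $s=s_*$, each family confined to its own half-plane and hence mutually \emph{external} to the members of the other family (concretely, the pencil $(s-\lambda)^2+t^2=\lambda^2-1$ with base points $(0,\pm i)$ contains the disjoint, non-nested circles $\lambda=2$ and $\lambda=-2$); since $v(F)\mapsto(\alpha,\beta,\gamma)$ generically surjects onto the pencil, both families are realized by classes in $K_{\mathrm{num}}(X)$. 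To repair this you must either weaken conclusion (2) to ``equal or disjoint in $\{t>0\}$'' for arbitrary $F_1,F_2$, or restrict to the walls that can actually carry a destabilizing subobject: $B\in\AC_{D,tH}$ forces $s<s_*$ (cf.\ \cref{remark: consider only the case of s<0}), no member of the pencil crosses the radical axis away from the base locus, so all relevant walls lie in the half-plane $s\le s_*$, where the total ordering by inclusion does hold. Either fix is routine, but as written the step ``no intersection, hence nested'' is a non sequitur.
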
 

For any $B,F_1,F_2 \in D^b(X)$, the following order is well-defined from the theorem: we write $\WC_{H,G}(F_1, B) \preceq_{u_0} \WC_{H,G}(F_2, B)$ if $\WC_{H,G}(F_1, B) \cap \Pi_{u_0}$ is inside the semicircle $\WC_{H,G}(F_2, B) \cap \Pi_{u_0}$ with no intersections of the two semicircles or $\WC_{H,G}(F_1, B) \cap \Pi_{u_0} = \WC_{H,G}(F_2, B) \cap \Pi_{u_0}$.

We assume that the Picard rank of $X$ is 3 and $\Pic(X) \otimes_\ZZ \RB$ is generated by $H,G_1,G_2$, where $G_1,G_2 \in \Pic(X) \otimes_\ZZ \RB$, $H.G_1=H.G_2=G_1.G_2=0$, $H^2=1$, and $G_1^2=G_2^2=-1$.
We define $\SC_{H,G_1,G_2}$ and $\SC^0_{H,G_1,G_2}$ to be  
\begin{align*}
     \SC_{H,G_1,G_2} &\coloneqq \left\{\sigma_{sH+u_1G_1+u_2G_2,tH} \mid s,u_1,u_2 \in \RB, t>0 \right\} \\
     
    &\cong \{(s,u_1,u_2,t) \in \RB^4 \mid t>0 \} , \text{and} \\
    \SC_{H,G_1,G_2}^0 &\coloneqq \lim_{t_0 \to 0} \big(\SC_{H,G_1,G_2} \cap \{\sigma_{sH+uG,tH} \in  \SC_{H,G_1,G_2} \mid t=t_0\}\big)  \\
    &\cong\{(s,u_1,u_2) \mid s,u_1,u_2 \in \RB\}.
\end{align*} 

Note that $\SC_{H,G_1,G_2}$ is the union of the slices because
 \[
 \SC_{H,G_1,G_2} = \bigcup_{\theta, t \in \RB, t>0} \SC_{H,(\cos\theta) G_1+(\sin\theta) G_2}.
 \]

Note that there exist such divisors for $X=\Bl_{p, q}\PPT$.
Indeed, we set 
\begin{itemize}
    \item $H'=aC_1+bC_2+cE \ (a,b,c \in \RB)$, 
    \item $G_1'\coloneqq aC_1-bC_2$, 
    \item $G_2'\coloneqq acC_1+bcC_2+2abE$, 
    \item $H\coloneqq H'/\sqrt{|H^{'2}|}$, 
    \item $G_1\coloneqq G_1'/\sqrt{|G_1^{'2}|}$, 
    \item $G_2\coloneqq G_2'/\sqrt{|G_2^{'2}|}$.
\end{itemize}
Then, we have $H^2=-G_1^2=-G_2^2=1$ and $H.G_1=H.G_2=0$.
Moreover, $H,G_1$ and $G_2$ generate the Picard group of $X$.
Observe that $H$ is ample if and only if $a+c>0, b+c>0, c<0$.
Thus, throughout this paper, we assume these numerical conditions.

For any $F,F' \in D^b(X)$, define $\SC_{H,G_1,G_2}^0$, $\WC_{H,G_1,G_2}(F, F')$ and $\WC_{H,G_1,G_2}^0(F, F')$ in the same way as above.

%

\begin{dfn}[Internal and Exterior of the Walls]
Define the interior and the exterior of walls by 
    \begin{align*}
        \wallintpola{F}{F'}{H}{G_1}{G_2} &\coloneqq 
        \left\{\sigma_{sH+u_1G_1+u_2G_2,tH} \in \SC_{H,G_1,G_2}
        \mathrel{} \middle| \mathrel{}
        {
        \begin{aligned}
           -\Re(Z_{D,tH}(F)\Im(Z_{D,tH}(F'))\\
            +\Re(Z_{D,tH}(F'))\Im(Z_{D,tH}(F))
        \end{aligned}}
         > 0 \right
       \}, 
   \end{align*}
and
   \begin{align*}
        \wallextpola{F}{F'}{H}{G_1}{G_2} &\coloneqq \left\{\sigma_{sH+u_1G_1+u_2G_2,tH} \in \SC_{H,G_1,G_2}
        \mathrel{} \middle| \mathrel{}
        {
        \begin{aligned}
           -\Re(Z_{D,tH}(F)\Im(Z_{D,tH}(F'))\\
            +\Re(Z_{D,tH}(F'))\Im(Z_{D,tH}(F))
        \end{aligned}}
         < 0 \right
       \}, \\
    \end{align*}
    where $D=sH+u_1G_1+u_2G_2$.
    We also define 
    \begin{align*}
    \wallintpolalim{F}{F'}{H}{G_1}{G_2}&\coloneqq \lim_{t_0 \to +0} \big(\wallintpola{F}{F'}{H}{G_1}{G_2} \cap \{\sigma_{sH+uG,tH} \in  \SC_{H,G} \mid t=t_0\}\big) \subset \RB^3, \\
    \wallextpolalim{F}{F'}{H}{G_1}{G_2}&\coloneqq \lim_{t_0 \to +0} \big(\wallextpola{F}{F'}{H}{G_1}{G_2} \cap \{\sigma_{sH+uG,tH} \in  \SC_{H,G} \mid t=t_0\}\big) \subset \RB^3. 
    \end{align*}
   
\end{dfn}


We assume that $D_1,D_2$ and $D_3$ also generate $\Pic(X)$.
Then, after the change of coordinates, we have
\[
   \SC_{H,G_1,G_2} \cong \{\sigma_{xD_1+yD_2+zD_3,tH} \mid x,y,z,t \in \RB, t>0  \}.
\]
To emphasize that $D_1,D_2$ and $D_3$ form a basis of $\SC_{H,G_1,G_2}$, the notations $\SC_{D_1,D_2,D_3}$, $\SC_{D_1,D_2,D_3}^0$,
 $\WC_{D_1,D_2,D_3}(F, F')$, $\WC_{D_1,D_2,D_3}^0(F, F')$, 
 $\cdots$ are often used.

\section{Bridgeland Stability of Line Bundles}\label{section: stability}
In this \cref{section: stability}, we discuss the Bridgeland stability of line bundles on the del Pezzo surface $X$ of Picard rank $3$. 
The idea and the strategy of the proof are essentially from Arcara and Miles (\cite{arcara_miles_2016_bridgeland_stability_of_line_bundles_on_surfaces, arcara_miles_2017_projectivity_of_bridgeland_moduli_spaces_on_del_pezzo_surfaces_of_picard_rank_2}) but in our case, as we treat the surface with $\rho=3$, modifications and more discussions are needed. 

First, we will see that the proof will be reduced to the case of the structure sheaf.
\begin{lem}[{\cite[Lemma 3.1.]{arcara_miles_2016_bridgeland_stability_of_line_bundles_on_surfaces}}]\label{Lemma: Arcara Miles 3.1}
    Let $\sigma_{D, H}$ be a divisorial stability condition, 
    $L = \OO(D')$ be a line bundle on $X$, and $F$ be a subobject of $L$ in $A_{D, H}$.
    Then, $F$ destabilize $L$ at $\sigma_{D, H}$ if and only if 
    $\OO(-D')\otimes F$ destabilizes $\OO$ at $\sigma_{D-D', H}$.
\end{lem}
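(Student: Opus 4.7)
The plan is to exhibit the tensor autoequivalence $T \coloneqq (-)\otimes\OO(-D')$ of $D^b(X)$ as an exact equivalence $\AC_{D,H}\xrightarrow{\sim}\AC_{D-D',H}$ that intertwines the central charges $Z_{D,H}$ and $Z_{D-D',H}$. Since $T(L)=\OO$, subobjects $F\hookrightarrow L$ in $\AC_{D,H}$ will correspond bijectively to subobjects $T(F)\hookrightarrow \OO$ in $\AC_{D-D',H}$, and the Bridgeland slope $\beta$ will be preserved. The destabilization statement then follows immediately from this correspondence.

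First, I would verify that $T$ restricts to an equivalence of hearts. Tensoring by a line bundle is an exact autoequivalence of $\Coh(X)$ and preserves the torsion/torsion-free decomposition. For any torsion-free sheaf $E$, one has $\mu_H(E\otimes\OO(-D'))=\mu_H(E)-D'.H$, so the defining inequality $\mu_H(\bullet)\leq D.H$ for $\FC_{D,H}$ transforms into $\mu_H(\bullet)\leq(D-D').H$, which is exactly the defining condition for $\FC_{D-D',H}$; the same uniform shift of $\mu_H$ matches $\TC_{D,H}$ with $\TC_{D-D',H}$. Taking cohomology, it follows that $T(\AC_{D,H})=\AC_{D-D',H}$, and short exact sequences transport along $T$.

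Second, I would verify that the central charges are intertwined. Since $\ch(\OO(-D')) = e^{-D'}$, a single-line manipulation of the defining integral gives $Z_{D-D',H}(T(G))=Z_{D,H}(G)$ for every $G\in D^b(X)$, because $e^{-(D-D'+iH)}\cdot e^{-D'}=e^{-(D+iH)}$. In particular, both $\Re Z$ and $\Im Z$ agree on $G$ and $T(G)$, so the Bridgeland slope $\beta$ is preserved under $T$.

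Combining these two steps: any destabilizing subobject $F\subset L$ at $\sigma_{D,H}$ yields a destabilizing subobject $T(F)=\OO(-D')\otimes F\subset \OO$ at $\sigma_{D-D',H}$ with the same $\beta$-slope, and the converse direction is obtained by applying the quasi-inverse $T^{-1}=(-)\otimes\OO(D')$. The only mildly delicate step is checking the preservation of the torsion pair, since one must track both the torsion-free and the tilt parts; however, because tensoring by a line bundle shifts every $\mu_H$-slope by the same constant $-D'.H$, this reduces to an elementary comparison of inequalities and presents no real obstacle.
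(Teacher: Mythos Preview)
Your proposal is correct and follows essentially the same approach as the paper: both check that tensoring by $\OO(-D')$ carries the heart $\AC_{D,H}$ to $\AC_{D-D',H}$ via the uniform slope shift $\mu_H\mapsto\mu_H-D'.H$, and then verify $Z_{D-D',H}(G\otimes\OO(-D'))=Z_{D,H}(G)$ using the multiplicativity of the Chern character. Your write-up is slightly more structured (framing $T$ as an exact equivalence of hearts and explicitly noting the correspondence of subobjects), but the underlying argument is identical.
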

\begin{proof}[Outline of the \proofname]
For our convenience, we will review the proof of this lemma. 
We have to check the two claims:
\begin{enumerate}
    \item $F\in\AC_{D, H}$ if and only if $F\otimes\OO(-D')\in\AC_{D-D', H}$,
    \item $Z_{D, H}(F) = Z_{D-D', H}(F\otimes\OO(-D'))$.
\end{enumerate}

    For the first claim, for a $\mu_H$-stable sheaf $F$, 
    we have that $\mu_H(F) > \mu_H(\OO(D))$ if and only if $\mu_H(F\otimes\OO(-D')) > \mu_H(\OO(D-D'))$.
    Thus, as the $\AC_{D, H}$ is defined as the extension closer of torsion sheaves, $\mu_H$-stable object $F$ with $\mu_H(F) > \mu_H(D)$, and the shift of $\mu_H$-stable $F'$ object with $\mu_H(F') \le \mu_H(D)$, 
    $F\in\AC_{D, H}$ if and only if $F\otimes\OO(-D')\in\AC_{D-D', H}$. 
    
    For the latter claim, it is shown by just elementary calculation with this expression
    \[
    Z_{D,H}(F) = -\int \exp(-(D+iH))\ch(F).
    \]
\end{proof}

This lemma makes the discussion simple, that is, to investigate the stability of line bundles it is enough to show only the case of the structure sheaf.

With these preparations, we proceed to the main theorem proof.

\begin{thm}\label{Theorem: stability of line bundle}
     Let $L$ be a line bundle on $X$ and $\sigma\in\divstab(X)$ a divisorial stability condition. 
    If an object $Z$ destabilizes $L$ at $\sigma_{D, H}$, there is a (maximally) destabilizing object $L(-C)$, where $C$ is negative self-intersection curve $C$ on $X$.
\end{thm}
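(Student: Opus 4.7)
By \cref{Lemma: Arcara Miles 3.1}, it suffices to treat the case $L=\OO$: a destabilizer of $L=\OO(D')$ at $\sigma_{D,H}$ corresponds under the twist $-\otimes\OO(-D')$ to one of $\OO$ at $\sigma_{D-D',H}$, and a maximally destabilizing $\OO(-C)\subset\OO$ untwists to $L(-C)\subset L$. So I would suppose that $F\subset\OO$ is a maximally destabilizing subobject in $\AC_{D,H}$, and examine the canonical sequence
\[
0\longrightarrow H^{-1}(F)[1]\longrightarrow F\longrightarrow H^{0}(F)\longrightarrow 0,
\]
where $H^{-1}(F)\in\FC_{D,H}$ is torsion free and $H^{0}(F)\hookrightarrow\OO$, so $H^{0}(F)=I_Z$ for some zero-dimensional $Z$. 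Since $I_Z$ alone has the same imaginary central charge as $\OO$ but weakly larger real part, it cannot destabilize $\OO$, so we must have $r\coloneqq\rk H^{-1}(F)\ge 1$. The plan is to induct on $r$.

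\textbf{Induction step.} For $r\ge 2$, take a $\mu_H$-maximal rank-one saturated subsheaf $G\hookrightarrow H^{-1}(F)$ and form the induced subobject $F'\subset F$ with $\rk H^{-1}(F')=1$. By \cref{Theorem: Bertram nested wall theorem}, within each slice $\Pi_{u_0}$ of $\SC_{H,G_1,G_2}$ the walls $\WC_{H,G_1,G_2}(F',\OO)$ and $\WC_{H,G_1,G_2}(F,\OO)$ are nested semicircles on the same axis; combined with the Bertram-type lemma referenced as \cref{lemma: Bertram's lemma for line bundle} in the outline, $F'$ itself destabilizes $\OO$ along its wall. Maximality of $F$ then forces the two walls to coincide on a dense subset of $\SC_{H,G_1,G_2}^{0}$, which I expect to pin their numerical classes together, reducing us to the rank-one case.

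\textbf{Rank-one case.} When $r=1$, write $H^{-1}(F)\cong I_W\otimes\OO(-C)$ for an effective divisor $C=\alpha C_1+\beta C_2+\gamma E$ and zero-dimensional $W$. Since nonzero lengths of $W$ or $Z$ only shift $\Re Z(F)$ in a way that weakens destabilization, one may pass to $W=Z=\emptyset$ and consider an extension $0\to\OO(-C)[1]\to F\to\OO\to 0$ classified by an element of $H^{2}(X,\OO(-C))$. Substituting the explicit formula for $Z_{D,tH}(F)$ from the opening of \cref{section: Quiver region and strategy of the proof} into the inequality $\beta(F)\ge\beta(\OO)$, and combining with $F\in\AC_{D,H}$, $H^{-1}(F)\in\FC_{D,H}$, and the Bogomolov inequality for $\OO(-C)$, I expect the numerics to force $C^{2}<0$. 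On $X=\Bl_{\{p,q\}}\PPT$, the irreducible negative self-intersection curves are precisely the three $(-1)$-curves $E_1,E_2,E$; a reducible $C$ would split off a proper subobject still destabilizing $\OO$, contradicting maximality. Hence $C\in\{E_1,E_2,E\}$, delivering $\OO(-C)\subset\OO$ and, after untwisting, $L(-C)\subset L$.

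\textbf{Main obstacle.} The principal difficulty over the Picard rank two case of \cite{arcara_miles_2017_projectivity_of_bridgeland_moduli_spaces_on_del_pezzo_surfaces_of_picard_rank_2} is that the parameter space $\SC_{H,G_1,G_2}^{0}$ is three-dimensional, while \cref{Theorem: Bertram nested wall theorem} only provides wall nesting along each one-parameter slice $\Pi_{u_0}$. Extending the induction step uniformly over the two-parameter family of slices, and ruling out competing destabilizers built from numerical combinations of the exceptional classes $E_1,E_2,E$, will require careful use of the orthogonal basis $H,G_1,G_2$ introduced in \cref{subsection: stability: settings and notations} together with the intersection relations recorded in \cref{subsection: the geometry of del pezzo}; this is the step I expect to be the most delicate.
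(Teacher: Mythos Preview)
Your setup contains a basic structural error that derails the whole argument. For a subobject $F\hookrightarrow\OO$ in $\AC_{D,H}$, the long exact sequence in standard cohomology of the triangle $F\to\OO\to Q$ gives $H^{-1}(F)\hookrightarrow H^{-1}(\OO)=0$, so $F$ is always a \emph{sheaf} (this is exactly \cref{lemma: induction start with 1}). Your induction variable $r=\rk H^{-1}(F)$ is therefore identically zero, and your claim that $H^{0}(F)\hookrightarrow\OO$ forces $H^{0}(F)=I_Z$ is false: the sheaf map $F=H^{0}(F)\to\OO$ has kernel $H^{-1}(Q)$ and image $I_Z(-C)$ for some effective $C$ and zero-dimensional $Z$. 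The two-term complex in the story is the \emph{quotient} $Q$, not $F$, and the correct induction is on $\rk F$ (equivalently on $\rk H^{-1}(Q)$). Consequently your ``rank-one case'', where you write $H^{-1}(F)\cong I_W\otimes\OO(-C)$ and view $F$ as an extension of $\OO$ by $\OO(-C)[1]$, is describing an object that never occurs.

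Even after repairing the bookkeeping, the sketch misses the substance on both sides. In the genuine rank-one case $F\simeq I_Z(-C)$, the point is not merely to deduce $C^2<0$; one must show that for \emph{every} effective $C$ with $C^2<0$ the region $\wallintpolalim{\OO(-C)}{\OO}{H}{G_1}{G_2}$ in the three-dimensional slice $\SC^{0}_{H,G_1,G_2}$ is contained, on $\{s<0\}$, in the union of the three regions $\wallintpolalim{\OO(-E_i)}{\OO}{H}{G_1}{G_2}$. This is the bulk of \cref{subsection: line bundle stability rank 1} and requires a case split ($z<0$ versus $z\ge0$) together with an inductive comparison $C\rightsquigarrow C-(C_1+C_2-E)$; no purely numerical Bogomolov-type inequality yields it. For higher rank, ``maximality forces the walls to coincide on a dense set'' is not a valid mechanism: distinct subobjects can give the same wall without being numerically equal, and a maximal destabilizer need not have minimal rank. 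The paper instead uses Bertram's lemma (\cref{lemma: Bertram's lemma for line bundle}) and its consequences (\cref{lemma: Arcara Miles 5.6 not Left Hyperbola case}, \cref{Lemma: Arcara Miles 5.7 Left Hyperbola case}), combined with a twisting trick---writing $C=\sum a_iE_i$, passing from $F$ to $F(a_1E_1)$ at the shifted condition $\sigma_{D+a_1E_1,tH}$, and iterating---to produce a destabilizer of strictly smaller rank (\cref{proposition: maximal destabilize}). Your proposal engages neither of these mechanisms.
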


\begin{lem}[{\cite[Lemma 4.1]{arcara_miles_2016_bridgeland_stability_of_line_bundles_on_surfaces}}]\label{lemma: induction start with 1}
    Let $\sigma_{D, H}\in\divstab(X)$ be a divisorial stability condition and
    \[
    0\to F \to \OO \to Q\to 0
    \]
    an exact sequence in $\AC_{D, H}$ such that $F\neq0$.
    Then, the following hold:
    \begin{enumerate}
        \item $H^0(Q)$ is the quotient of $\OO$ of rank $0$. 
        Especially, there exist a zero dimensional scheme $Z$ (possibly zero) and effective curve $C$ such that $\ker(\OO\to Q)\cong I_Z\otimes\OO(-C)$, and
        \item $F$ is a torsion free sheaf.
    \end{enumerate}
\end{lem}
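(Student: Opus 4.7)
The plan is to pass to the long exact sequence in coherent cohomology associated with the triangle $F\to\OO\to Q\to F[1]$ and then exploit the torsion-freeness of $\OO$ together with the fact that $F\to\OO$ is a monomorphism in the heart $\AC_{D,H}$. Since $H^{-1}(\OO)=0$ and $H^0(\OO)=\OO$, the long exact sequence collapses to
\[
0\to H^{-1}(F)\to 0\to H^{-1}(Q)\to H^0(F)\to \OO \to H^0(Q)\to 0,
\]
so $H^{-1}(F)=0$ and $F=H^0(F)$ is a coherent sheaf; by the definition of $\AC_{D,H}$ this forces $F\in\mathcal{T}_{D,H}$.

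Next, I would establish (2). Let $T\subseteq F$ be the torsion subsheaf. Every torsion sheaf lies in $\mathcal{T}_{D,H}$, so $T\in\AC_{D,H}$, and the quotient $F/T$ lies in $\mathcal{T}_{D,H}$ as well because $\mathcal{T}_{D,H}$ is closed under quotients; hence $T\hookrightarrow F$ is a monomorphism in $\AC_{D,H}$. Composing with $F\to\OO$ produces a monomorphism $T\to\OO$ in $\AC_{D,H}$. But both $T$ and $\OO$ are concentrated in degree $0$, so morphisms between them in $\AC_{D,H}$ coincide with morphisms in $\Coh(X)$, and these must vanish because $\OO$ is torsion-free. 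A morphism that is simultaneously a monomorphism and zero forces its source to vanish, so $T=0$ and $F$ is torsion-free.

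For (1), I would invoke the same identification to argue that the sheaf morphism $F\to\OO$ is nonzero: otherwise it would vanish in $\AC_{D,H}$ as well, and, being a monomorphism there, would yield $F=0$, contradicting $F\neq 0$. Hence $I\coloneqq\Image(F\to\OO)$ is a nonzero rank-$1$ torsion-free subsheaf of $\OO$, so $I\cong I_Z\otimes\OO(-C)$ for some effective divisor $C$ on $X$ and a zero-dimensional subscheme $Z$ (possibly empty). The long exact sequence then identifies $H^0(Q)=\OO/I$, a rank-zero quotient of $\OO$ whose kernel equals $I_Z\otimes\OO(-C)$, completing (1).

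The only subtle step is the identification of morphisms in $\AC_{D,H}$ between objects concentrated in degree $0$ with ordinary sheaf morphisms; this rests on $\Hom_{\AC_{D,H}}(A,B)=\Hom_{\Coh(X)}(A,B)$ for such $A,B$, after which the remainder of the argument is routine.
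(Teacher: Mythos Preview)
The paper does not supply its own proof of this lemma; it merely cites \cite[Lemma~4.1]{arcara_miles_2016_bridgeland_stability_of_line_bundles_on_surfaces}. Your argument is correct and is essentially the standard one: the long exact sequence forces $F$ to be a sheaf in $\TC_{D,H}$; any torsion subsheaf of $F$ would yield a monomorphism $T\hookrightarrow\OO$ in $\AC_{D,H}$ which, being a sheaf morphism into a torsion-free sheaf, must vanish; and the nonzero image of $F\to\OO$ in $\Coh(X)$ is then a rank-one torsion-free subsheaf of $\OO$, hence of the form $I_Z\otimes\OO(-C)$.
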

Thus, to prove \cref{Theorem: stability of line bundle} by induction of $\rk F$, it start with $\rk F =1$.

Let $\sigma_{D, H}\in\divstab(X)$ be a divisorial stability condition and 
$F\in D^b(X)$ a sheaf (i.e. $H^i(F)$ is trivial unless $i=0$). 
Assume that $\Hom_{D^b(X)}(F, \OO)\neq 0$ and denote $Q\coloneqq \Cone(F\to \OO)$.
Consider the following Harder-Narasimhan filtrations (with respect to $\mu_H$-stability): 
\begin{align*}
0=F_0 \subset F_1 \subset&\dots\subset F_{n-1}\subset F_n =F, \text{and}\\
0=H^{-1}(Q)_0 \subset H^{-1}(Q)_1 \subset&\dots\subset H^{-1}(Q)_{m-1}\subset H^{-1}(Q)_m =H^{-1}(Q)
\end{align*}
of $F$ and $H^{-1}(Q)$ respectively.

With these settings above, the following lemma holds:
\begin{lem}[cf: {\cite[Remark 4.6]{arcara_miles_2016_bridgeland_stability_of_line_bundles_on_surfaces}}]\label{lemma: subobject and filtration}
    $F\subset \OO$ in $\AC_{D, H}$ if and only if 
    \[
    \mu_{tH}(H^{-1}(Q)_{1}) \le D.H < \mu_{tH}(F/F_{n-1}).
    \]
\end{lem}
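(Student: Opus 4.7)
The plan is to unpack the assertion ``$F\subset\OO$ in $\AC_{D,H}$'' using the tilted-heart description $\AC_{D,H}=\langle\FC_{D,H}[1],\TC_{D,H}\rangle$ and translate each resulting membership condition into inequalities among $\mu_H$-HN slopes. The relation $F\subset\OO$ in the abelian category $\AC_{D,H}$ is equivalent to all three members $F,\OO,Q$ of the distinguished triangle $F\to\OO\to Q\to F[1]$ (with $Q=\Cone(F\to\OO)$) lying in $\AC_{D,H}$; granting these memberships, the triangle becomes a short exact sequence in $\AC_{D,H}$ realizing $F$ as a subobject of $\OO$. Thus the task reduces to characterizing the three memberships simultaneously.

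The first concrete step is to feed the triangle into the long exact sequence of cohomology sheaves. Since both $F$ and $\OO$ are honest sheaves, this collapses to
\[
0\to H^{-1}(Q)\to F\to\OO\to H^0(Q)\to 0,
\]
with $H^i(Q)=0$ for $i\neq-1,0$. Consequently $Q\in\AC_{D,H}$ becomes the pair of conditions $H^{-1}(Q)\in\FC_{D,H}$ and $H^0(Q)\in\TC_{D,H}$. The second of these comes for free: because $\Hom(F,\OO)\neq 0$, the image of $F\to\OO$ is a nonzero ideal sheaf inside $\OO$, so $H^0(Q)$ is a torsion sheaf, which has no torsion-free quotients and therefore lies in $\TC_{D,H}$ trivially.

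The remaining memberships are $F\in\TC_{D,H}$ and $H^{-1}(Q)\in\FC_{D,H}$. By \cref{lemma: induction start with 1}, $F$ may be taken torsion-free in the inductive setup, and then $H^{-1}(Q)\hookrightarrow F$ is torsion-free as well. Unwinding the definitions, $F\in\TC_{D,H}$ asserts that every torsion-free quotient of $F$ has slope strictly greater than $D.H$, which is equivalent to the minimal $\mu_H$-HN slope condition $\mu_{tH}(F/F_{n-1})>D.H$; similarly, $H^{-1}(Q)\in\FC_{D,H}$ translates into the maximal HN slope condition $\mu_{tH}(H^{-1}(Q)_1)\le D.H$. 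Combining the two yields the stated nested inequality, and the converse direction is obtained by reversing each equivalence.

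The principal subtlety I expect is the torsion bookkeeping. If torsion were present in $F$ or in $H^{-1}(Q)$, its contribution would sit at the top of the HN filtration with effectively infinite slope, and the upper inequality $\mu_{tH}(H^{-1}(Q)_1)\le D.H$ would automatically fail; thus the stated inequality already encodes torsion-freeness. \cref{lemma: induction start with 1} lets one dispense with this concern for $F$, and torsion-freeness of $H^{-1}(Q)$ then follows by inheritance from $F$. A secondary point worth noting is that $\mu_{tH}=t\cdot\mu_H$ rescales the slopes coherently for $t>0$, so the appearance of $tH$ in the slope while the cutoff is written $D.H$ is merely a notational artifact and does not change the content of either inequality.
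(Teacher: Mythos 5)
Your proposal is correct and follows essentially the same route as the paper's (much terser) proof: reduce the subobject condition to $F\in\TC_{D,H}$, $H^{-1}(Q)\in\FC_{D,H}$, and $H^{0}(Q)\in\TC_{D,H}$, observe that the last holds automatically because $H^{0}(Q)$ is a torsion quotient of $\OO$, and translate the first two into the extremal Harder--Narasimhan slope inequalities. Your additional remarks on torsion bookkeeping and the $t$-rescaling of slopes are consistent with the paper's conventions and fill in details the paper leaves implicit.
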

Note that when we assume the left hand side, $F$ automatically become a sheaf as $B$ is a sheaf.
\begin{proof}
    $F$ become a subobject of $\OO$ in $\AC_{D, H}$ if and only if $H^{-1}(Q), F\in \AC_{D, H}$.
    Indeed, only if part is not difficult and if part follows from the fact that $H^0(Q)\in \TC_{D, H}$ by \cref{lemma: induction start with 1}.
    Thus, each condition corresponds to each inequality. 
\end{proof}

\begin{rem}\label{remark: consider only the case of s<0}
    For $\sigma_{D, tH} = \sigma_{sH+uG, tH}\in\SC_{H, G}$, 
    $D.tH = st$.
    Therefore, if $F$ is a $\mu_{tH}$-semistable sheaf, 
    $s<\mu_{H}(E)$ if and only if $E\in \AC_{D, H}(=\AC_{D, tH})$ and samely, 
    $s\ge\mu_{H}(E)$ if and only if $E[1]\in \AC_{D, H}$
    Note that since $t$ is positive, it is divisible. 
    In particular, setting $E = \OO$, we only consider the case of $s<0$. 
    The remaining case will be mentioned in \cref{proposition: stability of shifted sheaf}.
    
\end{rem}

\subsection{Rank \texorpdfstring{$1$}{1} case}\label{subsection: line bundle stability rank 1}

\subsubsection{Settings and properties}\label{subsubsection: settings and properties for rank 1 case}


For $F,F' \in D^b(X)$, define
\begin{align*}
\wallpola{F}{F'}{H}{G_1}{G_2} &\coloneqq \wall{F}{F'} \cap  \SC_{H,G_1,G_2}\\
& \subset \SC_{H,G_1, G_2} \cong \{(s,u_1,u_2,t) \mid s,u_1,u_2,t \in \RB \}, \text{and} \\
\wallpolalim{F}{F'}{H}{G_1}{G_2} &\coloneqq \lim_{t_0 \to 0} \big(\wallpola{F}{F'}{H}{G_1}{G_2} \cap \{\sigma_{sH+u_1G_1+u_2G_2,tH} \in  \SC_{H,G_1, G_2} \mid t=t_0\}\big) \\
&\subset  \SC^0_{H,G_1, G_2} \cong \{(s,u_1,u_2) \mid s,u_1,u_2 \in \RB \}.

\end{align*}

Via the inclusions and isomorphisms above, we regard $\wallpola{F}{F'}{H}{G_1}{G_2}$ and $\wallpolalim{F}{F'}{H}{G_1}{G_2}$ as subsets of $\RB^4$ and $\RB^3$, respectively.

For any rank $1$ coherent sheaf $F$, denote
\begin{align*}
    d_h(F) \coloneqq c_1(F).H, \quad 
    d_{g_1}(F) \coloneqq -c_1(F).G_1, \ \text{and} \quad 
    d_{g_2}(F) \coloneqq -c_1(F).G_2.
\end{align*}

With the above notation, $c_1(F)=d_h(F)H+d_{g_1}(F)G_1+d_{g_2}(F)G_2$.
We also define
\[
\Phi_F (s,u_1,u_2) \coloneqq d_h(F) s^2+u_1^2+u_2^2 -2s(d_{g_1}(F)u_1+d_{g_2}(F)u_2) -2\ch_2(F)s
 \]

Then, we have
\begin{align*}
\wallpolalim{F}{\OO}{H}{G_1}{G_2} = \left\{(s,u_1,u_2) \in \RB^3 \mathrel{} \middle| \mathrel{} \Phi_F (s,u_1,u_2)=0 
 \right\}.  
\end{align*}

Modifying the the description of $\Phi_F(s,u_1,u_2)$, $\Phi_F(s,u_1,u_2)=0$ is equivalent to
\begin{align*}
  \left(u_1-\frac{d_{g_1}(F)}{d_{h}(F)}s\right)^2+\left(u_2-\frac{d_{g_2}(F)}{d_{h}(F)}s\right)^2 =\frac{2\ch_2(F)s}{d_h(F)}-\frac{d_h(F)^2-d_{g_1}(F)^2-d_{g_2}(F)^2}{d_h(F)^2}s^2.
\end{align*}
In particular, 
\[
\wallpolalim{F}{\OO}{H}{G_1}{G_2} \cap \{(s,u_1,u_2) \in \RB^3 \mid s=s_0\} 
\]
is a circle whose center is 
\[
c_F(s_0)\coloneqq\left(s_0, \frac{d_{g_1}(F)}{d_{h}(F)}s_0,\frac{d_{g_1}(F)}{d_{h}(F)}s_0\right)
\]
and radius $r_F(s_0)$ is 
\[
r_F(s_0)=\sqrt{\frac{2\ch_2(F)}{d_h(F)}s_0-\frac{d_h(F)^2-d_{g_1}(F)^2-d_{g_2}(F)^2}{d_h(F)^2}s_0^2}_.
\]

Let $F,F' \in \Coh(X)$.
Define
\begin{align*}
\Psi_{F,F'}(s,u_1,u_2)&\coloneqq\Phi_F (s,u_1,u_2)/d_h(F)-\Phi_{F'} (s,u_1,u_2)/d_h(F')\\
&=(d_{g_1}(F)/d_h(F)-d_{g_1}(F')/d_h(F'))u_1 \\
&+(d_{g_2}(F)/d_h(F)-d_{g_2}(F')/d_h(F'))u_2 \\
&+(\ch_2(F)/d_h(F)-\ch_2(F')/d_h(F')).
\end{align*}
If $\wallpolalim{F}{\OO}{H}{G_1}{G_2} \cap \wallpolalim{F'}{\OO}{H}{G_1}{G_2} \neq \emptyset$,  then $ \wallpolalim{F}{\OO}{H}{G_1}{G_2} \cap \wallpolalim{F'}{\OO}{H}{G_1}{G_2}$ is included in the zero set of $\Psi_{F,F'}(s,u_1,u_2)$.

Note that the tangent planes of $\wallpolalim{F}{\OO}{H}{G_1}{G_2}$ at $(0,0,0)$ and 
\[
P_F = (P_{F,1},P_{F,2},P_{F,3}) \coloneqq \frac{2 \ch_2(F)}{c_1(F)^2}(d_h(F),d_{g_1}(F),d_{g_2}(F))
\]
are parallel to $s=0$.
Note that 
\[
c_1(F)^2=d_h(F)^2-(d_{g_1}(F)^2+d_{g_2}(F)^2).
\]
We can show that if $c_1(F)^2<0$, $d_h(F)<0$ and $\ch_2(F) \neq 0$, then $\wall{F}{\OO}$ is a hyperboloid of two sheets.

The aim is to show that if $F$ destabilizes $\OO$ in $\AC_{D,tH}$, where $ D\coloneqq sH+u_1G_1+u_2G_2$, then there exists $i \in \{1,2,3\}$ such that $\OO(-E_i) \in \AC_{D,tH}$ and $\beta(\OO(-E_i)) \geq \beta(F)$.
Here, we set $E_3\coloneqq E$.
We may assume that $F \simeq \OO(-C)$ by \cref{lemma: induction start with 1}, where $C=-xC_1-yC_2-zE$ is an effective divisor such that $C^2<0$.
Note that $C$ is effective and negative if and only if 
\[
x,y \leq 0,\  x+y+z \leq 0 \ \text{and} \  z^2 > 2xy.
\]
In this case, $\wallpolalim{F}{\OO}{H}{G_1}{G_2}$ is a hyperboloid of two sheets and
\[
P_{F,1}= \frac{ay+bx-cz}{\sqrt{2ab-c^2}} \leq 0
\]
because 
\begin{align*}
    ay+bx-cz = (a+c)y+(b+c)x-c(x+y+z).
\end{align*}
In order to prove the assertion, it is sufficient to show that 
\begin{enumerate}
    \item if $z<0$,  
    \[
    \wallintpolalim{\OO(-E)}{\OO}{H}{G_1}{G_2} \supset \wallintpolalim{F}{\OO}{H}{G_1}{G_2}\]
    in the region $s<0$, and 
    \item  if $z \geq 0$,   
    \[
    \wallintpolalim{\OO(-E_1)}{\OO}{H}{G_1}{G_2} \cup \wallintpolalim{\OO(-E_2)}{\OO}{H}{G_1}{G_2} \supset \wallintpolalim{F}{\OO}{H}{G_1}{G_2}
    \]
    in the region $s<0$.
\end{enumerate}
 

\subsubsection{The case of $z<0$}

\begin{prop} 
\label{proposition:rank1_z<0}
If $z<0$,  
\[\wallintpolalim{\OO(-E)}{\OO}{H}{G_1}{G_2} \supset \wallintpolalim{F}{\OO}{H}{G_1}{G_2}
\]
in the region $s<0$.
\end{prop}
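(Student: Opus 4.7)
The plan is to prove the inclusion by a direct geometric comparison of the two hyperboloids $\wallpolalim{F}{\OO}{H}{G_1}{G_2}$ and $\wallpolalim{\OO(-E)}{\OO}{H}{G_1}{G_2}$ in $\SC_{H,G_1,G_2}^{0}$, using the defining functions $\Phi_{F}$ and $\Phi_{\OO(-E)}$. Under the hypotheses $x,y\le 0$, $x+y+z\le 0$, $z^{2}>2xy$, $z<0$, together with the ampleness conditions $a+c>0$, $b+c>0$, $c<0$, the excerpt has already established $d_{h}(F),d_{h}(\OO(-E))<0$ and $c_{1}(F)^{2},c_{1}(\OO(-E))^{2}<0$, so each wall is a hyperboloid of two sheets with exactly one sheet meeting $\{s<0\}$.

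The central observation is that on the sheet $\Phi_{F}=0\cap\{s<0\}$ the identity
\[
\Phi_{\OO(-E)}(s,u_{1},u_{2})=-d_{h}(\OO(-E))\cdot\Psi_{F,\OO(-E)}(s,u_{1},u_{2})
\]
holds by the very definition of $\Psi$. Since $d_{h}(\OO(-E))<0$, the sign of $\Phi_{\OO(-E)}$ on this sheet coincides with that of $\Psi_{F,\OO(-E)}$. Because the interior of either wall is characterized (in the $t\to 0$ limit) by positivity of the corresponding $\Phi$, the proposition reduces to the single sign statement $\Psi_{F,\OO(-E)}\ge 0$ on the $s<0$ sheet of $\wallpolalim{F}{\OO}{H}{G_1}{G_2}$.

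To prove this sign statement, I will exploit the fact that $\Psi_{F,\OO(-E)}$ is affine in $(u_{1},u_{2})$ and independent of $s$, so its zero set is a single plane in $\SC_{H,G_1,G_2}^{0}$. On each slice $\{s=s_{0}\}$ with $s_{0}\le d_{h}(F)<0$, the sheet restricts to a circle with centre $(s_{0}d_{g_{1}}(F)/d_{h}(F),s_{0}d_{g_{2}}(F)/d_{h}(F))$ and radius $r_{F}(s_{0})$, while the plane restricts to a line. The proof then consists of three concrete tasks: (i) compute $\Psi_{F,\OO(-E)}$ explicitly from $c_{1}(F)=xC_{1}+yC_{2}+zE$ and $c_{1}(\OO(-E))=-E$; (ii) evaluate at the apex $P_{F}=(d_{h}(F),d_{g_{1}}(F),d_{g_{2}}(F))$ of the sheet, where $\Phi_{F}$ vanishes automatically for a line bundle ($\ch_{2}=c_{1}^{2}/2$) and the sign of $\Psi_{F,\OO(-E)}$ becomes a clean rational expression in $a,b,c,x,y,z$; and (iii) verify a distance-versus-radius comparison showing that the line $\{\Psi_{F,\OO(-E)}=0\}$ does not cross the circle at any $s_{0}<0$, except possibly tangentially at the origin where both walls share the tangent plane $\{s=0\}$.

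The hardest part will be step (iii), which reduces to a polynomial inequality in six parameters subject to seven constraints, and must be handled with care near the origin. I expect the key manipulation to be the rewriting $bx+ay-cz=(a+c)y+(b+c)x-c(x+y+z)$ already used in the excerpt to establish $P_{F,1}\le 0$, together with analogous rewrites for the $G_{1}$- and $G_{2}$-intersections so that each sign is manifestly controlled by the ampleness and effectivity inequalities. Geometrically, the assumption $z<0$ selects $E$ as the dominant $(-1)$-curve precisely because it forces $E$ to appear with positive multiplicity in the effective divisor $C$, so that $\OO(-E)$ is the natural first-order approximation of $F=\OO(-C)$ among the three candidate destabilizers.
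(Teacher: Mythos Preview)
Your plan is correct and coincides with the paper's proof in all essential respects: both verify $\Phi_{\OO(-E)}(P_F)\ge 0$, then show that the radical plane $\{\Psi_{F,\OO(-E)}=0\}$ does not meet the relevant slice-circles for $s_0\le P_{F,1}$, from which the nesting of the disks follows. The only difference is a harmless choice of normalization in step~(iii): you propose to compare the line $\ell(s_0)$ against the $F$-circle (distance from $c_F(s_0)$ versus $r_F(s_0)$), whereas the paper compares it against the $\OO(-E)$-circle (distance from $c_{\OO(-E)}(s_0)$ versus $r_{\OO(-E)}(s_0)$). Since $\ell$ is the radical axis, these two non-intersection statements are equivalent; the paper's choice keeps the center and radius free of the parameters $x,y,z$, which makes the resulting quadratic $d^2-r_{\OO(-E)}^2$ in $s_0$ and its discriminant $16(2ab-c^2)(2xy-z^2)(2xy-(z+1)^2)((bx+ay)^2-2c^2xy)$ factor more cleanly than the $F$-version would. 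The paper also records the intermediate step $\Phi_{\OO(-E)}(c_F(s_0))\ge 0$ explicitly (your step~(ii) at $P_F$ plus connectedness gives the same thing), and then splits on the sign of $2xy-(z+1)^2$; when you carry out the algebra you will need this case split as well.
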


\begin{proof}[\proofname\ of \cref{proposition:rank1_z<0}]

   First, note that $P_F \in \wallpolalim{\OO(-E)}{\OO}{H}{G_1}{G_2}$ because 
    \begin{align*}
    \Phi_{\OO(-E)} (P_F) &= \frac{(1+2z)(ya+xb)-(2xy+z+z^2)c}{\sqrt{2ab-c^2}} \geq 0.
        
    \end{align*}

    It is also observed that the center of 
    \[
    \wallpolalim{F}{\OO}{H}{G_1}{G_2} \cap \{(s,u_1,u_2) \in \RB^3 \mid s=s_0\}
    \]
    is belongs to $\wallintpolalim{\OO(-E)}{\OO}{H}{G_1}{G_2}$ 
    when $s_0 \leq P_{F,1}$.
    The following formula is important to prove the proposition:
    \begin{align*}
    &\Phi_{\OO(-E)} \left(s_0, \frac{d_{g_1}(F)}{d_{h}(F)}s_0,\frac{d_{g_1}(F)}{d_{h}(F)}s_0\right) 
    = s_0\left(\frac{(2z(ya+xb)-c^2(2xy+z^2))\sqrt{2ab-c^2}}{(bx+ay-cz)^2}s_0+1\right).
    \end{align*}

    We show that 
    \begin{align*}
        \wallintpolalim{\OO(-E)}{\OO}{H}{G_1}{G_2} \cap \{(s,u_1,u_2) \in \RB^3 \mid s=s_0\} \\
        \supset \wallintpolalim{F}{\OO}{H}{G_1}{G_2} \cap\{(s,u_1,u_2) \in \RB^3 \mid s=s_0\}
    \end{align*}
    for all $s \leq P_{F,1}$.
    It is sufficient to show that the distance $d(c_{\OO(-E)}(s_0), \ell_1(s_0))$ between $c_{\OO(-E)}(s_0)$ and the line 
    \[
    \ell_1(s_0) \coloneqq \{(s_0, u_1,u_2) \in \RB^3 \mid \Psi_{\OO(-E), \OO(-C)}(s_0,u_1,u_2)=0\}
    \]
    is larger than $r_{\OO(-E)}(s_0)$ for all $s_0 \leq P_{F,1}$.
    Proving this claim is equivalent to verifying that 
    \begin{align*}
        d(c_{\OO(-E)}(s_0), \ell_1(s_0))^2-r_{\OO(-E)}(s_0)^2 &= (8xy(2ab-c^2))s_0^2 \\
        &+\sqrt{2ab-c^2}(-4(2xy-z(z+1))(ya+xb)-8xyc)s_0 \\
        &+(xb-z(1+z)c+y(a+2xc))^2 
    \end{align*}
    is greater than or equal to $0$.

    To simplify notation, we write $d$ as $ d(c_{\OO(-E)}(s_0), \ell_1(s_0))$. 
     The discriminant $\Delta(d^2-r_{\OO(-E)}(s_0)^2)$ is 
    \begin{align*}
       \Delta(d^2-r_{\OO(-E)}(s_0)^2) =  16\left(2ab-c ^2\right) \left(2xy-z^2\right) \left(2xy-(z+1)^2\right) \left((bx+a  y)^2-2 c^2xy\right).
    \end{align*}
    When $2xy-(z+1)^2 \geq 0$, $\Delta(d^2-r_{\OO(-E)}(s_0)^2) \leq 0$. 
    Thus, it suffices to consider the case $2xy-(z+1)^2<0$.
    In this case, it is sufficient to show that 
    \[
    P_{F,1} -\frac{-4(2xy-z(z+1))(ya+xb)-8xyc}{8xy(2ab-c^2)}\sqrt{2ab-c^2}
    \]
    is less than or equal to $0$.
    This reduces to proving that
    \begin{align*}
        &\frac{(-4(2xy-z(z+1))(ya+xb)-8xyc)\sqrt{2ab-c^2}-P_{F,1}8xy(2ab-c^2)}{4} \\
        &= (2xy-z(1+z))(ya+xb)+2xyc-2xy(ya+xb-zc)\sqrt{2ab-c^2}
    \end{align*}
     is greater than or equal to $0$.
    Then, the above formula is modified:
   \begin{align*}
        (2xy-z(1+z))(ya+xb)+2xyc &= (2xy-(1+z)^2)(ya+xb) \\
        &+(1+z)(ya+xb)+2xyc.
   \end{align*}
   Then, the expression
   $(1+z)(ya+xb)+2xyc$
   is greater than or equal to $0$ from the following calculation
   \begin{align*}
          (1+z)^2(ya+xb)^2-(2xyc)^2 &> 2xy(ya+xb)^2-4x^2y^2c^2  \\
          &=2xy((ya+xb)^2-2xyc^2) \\
          &=2xy(y^2a^2+x^2b^2+2xy(ab-c^2))\geq 0.
   \end{align*}
This completes the proof.

\end{proof}

\subsubsection{The case of $z \geq 0$}
\quad

First, note that $x+z \geq 0$ or $y+z \geq 0$.
In fact, if $x+z<0$ and $y+z<0$, then we have $xy>z^2$.
This contradicts $2xy-z^2<0$.

If $(-C)^2=2xy-z^2 < 0$, then $ (x+1)C_1+(y+1)C_2+(z-1)E$ is also negative.
In addition, if $x<0, y<0$ and $x+y+z <0$, then $-(x+1)C_1-(y+1)C_2-(z-1)E$ is effective.

\begin{lem}
\label{lemma:rank1_z>=0}
Assume that $z \geq 0$.
If $x+y+z \neq 0$, $x \neq 0$ and $y \neq 0$, then 
\[
\wallintpolalim{F}{\OO}{H}{G_1}{G_2} \subset \wallintpolalim{F(C_1+C_2-E)}{\OO}{H}{G_1}{G_2}
\]
in the region $s<0$
\end{lem}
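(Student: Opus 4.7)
The plan is to mimic the strategy of Proposition \ref{proposition:rank1_z<0}. Writing $F = \OO(-C)$ with $C = -xC_1 - yC_2 - zE$, set $F' \coloneqq F(C_1+C_2-E) = \OO(-C')$ where $C' = -(x+1)C_1 - (y+1)C_2 - (z-1)E$. The hypotheses $x\ne 0$, $y\ne 0$, $x+y+z\ne 0$, combined with the standing assumptions $x,y\le 0$ and $x+y+z\le 0$, yield strict inequalities $x<0$, $y<0$, $x+y+z<0$; so by the remark preceding the statement, $C'$ is again effective with $(C')^2<0$, and both $\wallpolalim{F}{\OO}{H}{G_1}{G_2}$ and $\wallpolalim{F'}{\OO}{H}{G_1}{G_2}$ are hyperboloids of two sheets in the $s<0$ region.

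I would then work slice by slice in $\{s = s_0\}$ for $s_0 < 0$. Each slice of the wall for $F$ is a (possibly empty) disk, centered at $c_F(s_0)$ with radius $r_F(s_0)$, and analogously for $F'$. The inclusion of interiors at that slice reduces to the following two claims: (i) the center $c_F(s_0)$ lies in the closed interior of the $F'$-circle, which will follow from a direct evaluation of $\Phi_{F'}$ at $c_F(s_0)$, using that $c_1(F') = c_1(F) + (C_1+C_2-E)$ and $\ch_2(F') = \ch_2(F) + c_1(F).(C_1+C_2-E) + \tfrac{1}{2}$; and (ii) the distance from $c_F(s_0)$ to the intersection line
\[
\ell(s_0) \coloneqq \{(s_0, u_1, u_2) \mid \Psi_{F, F'}(s_0, u_1, u_2) = 0\}
\]
is at least $r_F(s_0)$. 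As in the proof of Proposition \ref{proposition:rank1_z<0}, I would also verify the complementary tangent condition $\Phi_{F'}(P_F) \ge 0$, to ensure that the correct sheet of the hyperboloid is being picked out and that the nesting is on the sheet meeting $s<0$.

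The main obstacle I expect is the explicit algebraic verification of (ii): the quantity $d(c_F(s_0), \ell(s_0))^2 - r_F(s_0)^2$ will be a quadratic in $s_0$, and its nonnegativity on $s_0 < 0$ should reduce either to a nonpositive discriminant condition or to a bound showing the larger root is nonpositive, exactly as in the endgame of Proposition \ref{proposition:rank1_z<0}. Because $F'$ now depends on $F$, rather than being a fixed reference object like $\OO(-E)$, the relevant expressions in $a, b, c, x, y, z$ are longer; the strict hypotheses $x \ne 0$, $y \ne 0$, $x+y+z \ne 0$ are precisely what is needed to preserve strict positivity in the analogues of the final factorizations appearing in that proof. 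Once this lemma is in place, iterated application will decrease $z$ by one at each step and eventually move us into the case $z < 0$ already settled by Proposition \ref{proposition:rank1_z<0}, from which the containment in $\wallintpolalim{\OO(-E_1)}{\OO}{H}{G_1}{G_2} \cup \wallintpolalim{\OO(-E_2)}{\OO}{H}{G_1}{G_2}$ will follow by composition.
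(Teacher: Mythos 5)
Your plan coincides with the paper's proof in structure: the paper likewise argues slice by slice in $\{s=s_0\}$ for $s_0\le P_{F,1}$, first showing that the center $c_F(s_0)$ lies in $\wallintpolalim{F(C_1+C_2-E)}{\OO}{H}{G_1}{G_2}$ by evaluating $\Phi_{F(C_1+C_2-E)}$ at $c_F(s_0)$ and at $P_F$, and then showing the two circles never meet by bounding the distance from a center to the radical axis $\{\Psi_{F(C_1+C_2-E),F}=0\}$ below by the corresponding radius. The only structural deviation is that you compare the distance from $c_F(s_0)$ with $r_F(s_0)$, whereas the paper compares the distance from $c_{F(C_1+C_2-E)}(s_0)$ with $r_{F(C_1+C_2-E)}(s_0)$; either suffices, since the radical axis contains every intersection point of the two circles, so it is immaterial which circle it is shown to miss.

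What is missing is everything that makes the lemma true rather than merely plausible. In the paper the relevant quantity is a quadratic in $s_0$ of the form $\numerator(s_0,a,b,c,x,y,z)/\denominator(a,b,c,x,y,z)$; establishing $\denominator>0$ requires a separate induction on $x+y+z$ with the boundary cases $x=0$, $y=0$, $x+y+z=0$ checked explicitly, and the negativity of the discriminant of $\numerator$ hinges on the non-obvious fact that $\denominator$ reappears as a factor of that discriminant, the remaining factor $(2ab-c^2)(z^2-2xy)\bigl(2(x+y+z)+(2xy-z^2)\bigr)$ being negative exactly because $z^2>2xy$ and $x+y+z<0$. Your assertion that the strict hypotheses "are precisely what is needed to preserve strict positivity" is the conclusion of that computation, not a substitute for it, so as written the proof is incomplete even though the skeleton is the correct one. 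A minor side point: the iteration you sketch at the end need not reach $z<0$; since each application sends $(x,y,z)$ to $(x+1,y+1,z-1)$, it may first terminate at one of $x=0$, $y=0$ or $x+y+z=0$, which is precisely why the paper's treatment of the case $z\ge 0$ handles those three boundary cases by separate comparisons with $\OO(-E_1)$ and $\OO(-E_2)$.
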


\begin{proof}[\proofname\ of \cref{lemma:rank1_z>=0}]

The proof is divided into $2$ steps.

\paragraph{\textit{Step 1}} 
We prove that 
\[
c_{F}(s_0) \in \wallintpolalim{F(C_1+C_2-E)}{\OO}{H}{G_1}{G_2} \cap \{(s,u_1,u_2) \in \RB^3 \mid s=s_0\}
\]
for all $s_0 \leq P_{F,1}$.
Because
\begin{align*}
    \Phi_{F(C_1+C_2-E)} (s_0, c_F(s_0)) &=-2s_0(1+x)(1+y)+(-1+z)^2 \\
    &+\frac{s_0^2\sqrt{2ab-c^2}}{(bx+ay-cz)^2}(2(bx^2(1+y)+y(-cx+a(1+x)y)) \\
    &+2(bx+ay-c(x+y+xy))z+(a+b-c-bx-ay)z^2+cz^3),
\end{align*}
it is enough to show 
\begin{align*}
    &2(bx^2(1+y)+y(-cx+a(1+x)y))\\
    &+2(bx+ay-c(x+y+xy))z+(a+b-c-bx-ay)z^2+cz^3 
\end{align*}
and
\begin{align*}
      \Phi_{F(C_1+C_2-E)} (P_F) =\frac{(-bx-ay-2(a+b+c)xy+cz+(a+b+c)z^2)}{\sqrt{2ab-c^2}} 
\end{align*}
are greater than or equal to $0$.

As for the first formula, the claim follows from the calculation
\begin{align*}
    &2(bx^2(1+y)+y(-cx+a(1+x)y))\\
    &+2(bx+ay-c(x+y+xy))z+(a+b-c-bx-ay)z^2+cz^3 \\
    &=(2xy-z^2)(yz+xb-zc)-c(x+y+z)^2+a(y+z)^2+b(x+z)^2\\
    &+y^2(a+c)+x^2(b+c) >0.
\end{align*}

As for the second formula, the claim follows from the calculation
\begin{align*}
    ( \text{the numerator of } \Phi_{F(C_1+C_2-E)} (P_F)) =  -(ya+xb-zc)+(z^2-2xy)(a+b+c) > 0.
\end{align*}

\paragraph{\textit{Step 2}}
We prove that the distance $d(c_{F(C_1+C_2-E)}(s_0), \ell_2(s_0))$ between $c_{F(C_1+C_2-E)}(s_0)$ and the line 
\[
\ell_2(s_0) \coloneqq \{(s_0, u_1,u_2) \in \RB^3 \mid \Psi_{F(C_1+C_2-E), F}(s_0,u_1,u_2)=0\}
\]
is larger than $r_{F(C_1+C_2-E)}(s_0)$ for all $s_0 \leq P_{F,1}$.

From a direct calculation, it follows that
\begin{align*}
    d\coloneqq d(c_{F(C_1+C_2-E)}(s_0), \ell_2(s_0)) = \frac{\numerator(s_0,a,b,c,x,y,z)}{\denominator(a,b,c,x,y,z)},
\end{align*}
where 
\begin{align*}
    \denominator(a,b,c,x,y,z) &\coloneqq4(a^2(y+z)^2+(x+z)(2bc(x-y)+b^2(x+z)-2c^2(y+z))\\
    &+2a(-c(x-y)(y+z)+b(x^2-xy+y^2+(x+y)z+z^2)))
\end{align*}
and 
\begin{align*}
    \numerator(s_0,a,b,c,x,y,z)&\coloneqq4\big(2ab-c^2\big)\big((x+z)^2+(y+z)^2\big)s_0^2\\
    &-4\sqrt{2 a b-c^2}\Big(c\big(-2xy(1+x+y)\\
    &-(x+y)(1+2x+2y)z-3(x+y)z^2-2z^3\big)\\
    &+b\big(x(x+2x^2-y)+x(1+4x+2y) z+(1+4x+y)z^2+z^3\big)\\
    &+a\big(y(-x+y+2y^2)+y(1+2x+4y)z+(1+x+4y)z^2+z^3\big)\Big)s_0\\
    &+\Big(b(x+2x^2 +2xz+z^2)+a(y+2y^2+2yz+z^2)\\
    &-c(2xy+z+2(x+y)z+z^2)\Big)^2.
\end{align*}
First, we prove that $\denominator(a,b,c,x,y,z)>0$.
It is enough to show that  
\[
\denominatortwo(a,b,c,x,y,z) \coloneqq \denominator(a,b,c,x,y,z) - (z^2-2xy)(a+c)(b+c) \geq 0.
\]
When $x+y+z<0$ and $x,y<0$,
\begin{align*}
  &\denominatortwo(a,b,c,x,y,z) - \denominatortwo(a,b,c,x+1,y+1,z-1) \\
  &= -(1+2x+2y+2z)(a+c)(b+c)>0.
\end{align*}
We also have
\begin{align*}
\denominatortwo(a,b,c,x,0,z) &= (x+z)^2(b(a+b)+(-a+b)c-c^2)\\
    &+(a+c)(b+c)x^2+(a^2-2c(b+c))z^2, \\
    \denominatortwo(a,b,c,0,y,z) &= (y+z)^2(a^2+a(b+c)-c(b+c))\\
    &+(a+c)(b+c)y^2+(b^2-2c(a+c))z^2, \\
    \denominatortwo(a,b,c,x,y,-x-y) &= (a+c)(b+c)(x-y)^2\\
    &+(a^2-2c(b+c))x^2+(b^2-2c(a+c))y^2.
\end{align*}
It is easy to see that the above formulas are positive.
Thus, $\denominator(a,b,c,x,y,z)$ is positive from an induction on $x+y+z$. 

Next, we show that $\numerator(s_0,a,b,c,x,y,z)$ is non-negative.
The discriminant of $\numerator(s_0,a,b,c,x,y,z)$ is 
\begin{align*}
    &16(2ab-c^2)(z^2-2xy)(2(x+y+z)+(2xy-z^2))\\
    &\big(a^2(y+z)^2+(x+z)(2bc(x-y)+b^2(x+z)-2c^2(y+z))\\
    &+2a(-c(x-y)(y+z)+b(x^2-xy+y^2+(x+y)z+z^2))\big)
\end{align*}
Note that the last factor is $\denominator(a,b,c,x,y,z)$.
Thus, the discriminant is negative and $\numerator(s_0,a,b,c,x,y,z)$ is positive since the constant term of $\numerator(s_0,a,b,c,x,y,z)$ is positive.

Hence, the proof is complete.
\end{proof}

\begin{prop}
\label{proposition:rank1_z>=0}
    If $z \geq 0$, 
    \[
    \wallintpolalim{\OO(-E_1)}{\OO}{H}{G_1}{G_2} \cup \wallintpolalim{\OO(-E_2)}{\OO}{H}{G_1}{G_2} \supset \wallintpolalim{F}{\OO}{H}{G_1}{G_2}
    \]
    in the region $s<0$.
\end{prop}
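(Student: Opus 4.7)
The plan is to combine \cref{lemma:rank1_z>=0} as the main reduction tool with direct computations in boundary base cases. A preliminary observation: since $-C=xC_1+yC_2+zE$ is effective with $C^2<0$ and $x,y\le 0$ (so $xy\ge 0$), the case $z=0$ would force $2xy<0$ and is therefore vacuous; hence $z\ge 1$ throughout. The triples $(x,y,z)=(0,-1,1)$ and $(-1,0,1)$ correspond to $F=\OO(-E_1)$ and $F=\OO(-E_2)$, respectively, providing the trivial base cases of the desired containment.

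The induction will run on $n(x,y,z)\coloneqq\min(-x,-y,-(x+y+z))$, which measures how many times \cref{lemma:rank1_z>=0} can be applied before a degeneracy is hit. When $n\ge 1$ we have $x,y,x+y+z$ all strictly negative, \cref{lemma:rank1_z>=0} applies, and
\[
\wallintpolalim{F}{\OO}{H}{G_1}{G_2}\subset\wallintpolalim{F\otimes\OO(H)}{\OO}{H}{G_1}{G_2}.
\]
A direct calculation using the identity $(-C')^2=(2xy-z^2)+2(x+y+z)+1$ with $C'\coloneqq C-H$ shows that $F\otimes\OO(H)=\OO(-C')$ still corresponds to an effective divisor of negative self-intersection and satisfies $z-1\ge 0$, while $n$ strictly decreases. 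So the inductive hypothesis yields the containment for $F\otimes\OO(H)$, and hence for $F$.

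When $n=0$ we land in a boundary base case where one of $x$, $y$, $x+y+z$ vanishes. If $x=0$, then $-C=yC_2+zE$ with $y\le -1$ and $1\le z\le -y$, and I would prove $\wallintpolalim{F}{\OO}{H}{G_1}{G_2}\subset\wallintpolalim{\OO(-E_1)}{\OO}{H}{G_1}{G_2}$ by the same two-step template as in the proof of \cref{lemma:rank1_z>=0}: first verify that $c_F(s_0)$ lies in the interior of the target $\OO(-E_1)$-wall for all $s_0\le P_{F,1}$, then bound the distance from $c_{\OO(-E_1)}(s_0)$ to the line $\Psi_{\OO(-E_1),F}=0$ below by $r_{\OO(-E_1)}(s_0)$. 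The case $y=0$ is symmetric, with $\OO(-E_2)$ as the target. When $x+y+z=0$ we have $-C=xE_2+yE_1$ and the union of both $\OO(-E_i)$-walls is genuinely needed.

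The hard part will be the explicit algebraic verifications in the boundary cases, which parallel the discriminant analysis carried out in the proof of \cref{lemma:rank1_z>=0}; I expect similarly lengthy polynomial manipulations with careful sign tracking. The $x+y+z=0$ case is the most delicate, since the union of two walls is required and one will likely have to split the slice of $\wallintpolalim{F}{\OO}{H}{G_1}{G_2}$ according to the sign of an affine combination of $u_1$ and $u_2$ in order to identify which of $\OO(-E_1)$ or $\OO(-E_2)$ covers each point.
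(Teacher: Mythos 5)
Your proposal takes essentially the same route as the paper: the induction on $n(x,y,z)=\min(-x,-y,-(x+y+z))$ makes explicit the reduction (via iterated application of \cref{lemma:rank1_z>=0}, whose hypotheses you correctly verify are preserved) to the three boundary cases $x=0$, $y=0$, $x+y+z=0$, and your two-step template for $x=0$ and $y=0$ (center containment plus the radical-line distance bound) and your union argument governed by the sign of an affine function of $u_1,u_2$ for $x+y+z=0$ are precisely the paper's arguments, the latter being implemented there via the radical line $\ell_6$ of the two $\OO(-E_i)$-walls, the intersection points $I_1(s_0),I_2(s_0)$, and the feet of the perpendiculars. The explicit polynomial and discriminant verifications in the boundary cases, which constitute the bulk of the paper's proof, are announced but not carried out in your plan, though what you propose to check is exactly what the paper checks.
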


\begin{proof}[\proofname\ of \cref{proposition:rank1_z>=0}]

    By the \cref{lemma:rank1_z>=0}, it is sufficient to consider the three cases $x=0$, $y=0$ and $x+y+z=0$.
    
    \medskip
    \paragraph{\textit{The case} $x=0$}
    \quad 
    
    We show that
    \[
    \wallintpolalim{F}{\OO}{H}{G_1}{G_2} \subset \wallintpolalim{\OO(-E_1)}{\OO}{H}{G_1}{G_2}.
    \]
    Note that $E_1=C_2-E$.
    First, we verify that 
    \[
    \left(s_0, \frac{d_{g_1}(F)}{d_{h}(F)}s_0,\frac{d_{g_1}(F)}{d_{h}(F)}s_0\right) \in \wallintpolalim{\OO(-E_1)}{\OO}{H}{G_1}{G_2}
    \]
    for all $s_0 \leq P_{F,1} \ (\leq P_{\OO(-E_1),1})$.
    It follows that 
    \begin{align*}
     &\Phi_{\OO(-E_1)} \left(s_0, \frac{d_{g_1}(F)}{d_{h}(F)}s_0,\frac{d_{g_1}(F)}{d_{h}(F)}s_0\right) \\
     &= \frac{s_0^2(-2ayz-az^2+cz^2)+s_0(a^2y^2-2acyz+c^2z^2)}{(ay-cz)^2}\sqrt{2ab-c^2}.
    \end{align*}
    Because $-2ayz-az^2+cz^2>0$, it is sufficient to show $\Phi_{\OO(-E_1)} \left(P_F \right)  \geq 0 $, which follows from 
    \begin{align*}
        \Phi_{\OO(-E_1)} \left(P_F \right) = \frac{-(z-1)(ay+cz)-a(y+z)}{\sqrt{2ab-c^2}} \geq 0.
    \end{align*}
    It remains to show that the distance $d(c_{\OO(-E_1)}(s_0), \ell_3(s_0))$ between $c_{\OO(-E_1)}(s_0)$ and the line 
    \[
    \ell_3(s_0) \coloneqq \{(s_0,u_1,u_2) \in \RB^3 \mid \Psi_{\OO(-E_1), F}(s_0,u_1,u_2)=0\}
    \]
    is greater than $r_{\OO(-E_1)}(s_0)$ for all $s_0 \leq P_{F,1} \ (\leq P_{\OO(-E_i),1})$.
    This is verified by the following calculation: 
    \begin{align*}
       d(c_{\OO(-E_1)}(s_0), \ell_3(s_0)) =  \frac{4az(z-1)(y+z)s_0\sqrt{2ab-c^2}+(c(z-1)z+a(y+z^2))^2}{4a^2(y+z)^2} \geq 0.
    \end{align*}

   The proof in the case $y = 0$ is analogous, and the details are omitted.

   \medskip
   \paragraph{\textit{The case} $x+y+z=0$}
   \quad 

   We may assume that $xy \neq 0$ by the above two cases.
   We first show that 
      \[
    \left(s_0, \frac{d_{g_1}(F)}{d_{h}(F)}s_0,\frac{d_{g_1}(F)}{d_{h}(F)}s_0\right) \in \wallintpolalim{\OO(-E_1)}{\OO}{H}{G_1}{G_2} \cup \wallintpolalim{\OO(-E_2)}{\OO}{H}{G_1}{G_2}
    \]
    for all $s_0 \leq P_{F,1}$.
    When $y \leq x$, 
    \begin{align*}
         &\Phi_{\OO(-E_1)} \left(s_0, \frac{d_{g_1}(F)}{d_{h}(F)}s_0,\frac{d_{g_1}(F)}{d_{h}(F)}s_0\right) = \frac{\splitfrac{(2(b+c)xy+(a+c)(y^2-x^2))\sqrt{2ab-c^2}s_0^2}{+(y(a+c)+x(b+c))^2s_0}}{((b+c)x+(a+c)y)^2}
    \end{align*}
    and 
    \begin{align*}
        \Phi_{\OO(-E_1)} \left(P_{F,1}, \frac{d_{g_1}(F)}{d_{h}(F)}P_{F,1},\frac{d_{g_1}(F)}{d_{h}(F)}P_{F,1}\right) = \frac{(1+2y)(b+c)x+((y+x)(y-x+1)-x)(a+c)}{((b+c)x+(a+c)y)^2}.
    \end{align*}
    Thus, the center of the circle is contained in $\wallintpolalim{\OO(-E_1)}{\OO}{H}{G_1}{G_2}$ and the assertion holds except for the case $x=y$ and $b<a$.
    As for the exceptional case and the case $y>x$, It follows similarly that the center is contained in $\wallintpolalim{\OO(-E_2)}{\OO}{H}{G_1}{G_2}$ . 

   We next show that $\wallintpolalim{F}{\OO}{H}{G_1}{G_2}$ does not contain the intersection of $\wallintpolalim{\OO(-E_1)}{\OO}{H}{G_1}{G_2}$ and $\wallintpolalim{\OO(-E_2)}{\OO}{H}{G_1}{G_2}$.
    Let $I_1(s_0),I_2(s_0)$ be the two points in 
    \[
    \wallpolalim{\OO(-E_1)}{\OO}{H}{G_1}{G_2} \cap \wallpolalim{\OO(-E_2)}{\OO}{H}{G_1}{G_2} \cap \{(s,u_1,u_2) \in \RB^3 \mid s=s_0\}.
    \]
  From the direct calculation by a computer, the inequalities 
    \begin{align*}
        \Phi_{F} (I_1(s_0))+ \Phi_{F} (I_2(s_0)) =2s_0(x^2+x+y^2+y)<0, \\
        \Phi_{F} (I_1(s_0))\Phi_{F} (I_2(s_0)) = s_0^2(x+x^2+y+y^2)^2>0
    \end{align*}
    are obtained, which gives the assertion.

   Let $\ell_{4}(s_0),\ell_{5}(s_0),\ell_{6}(s_0)$ be 
   \begin{align*}
        &\ell_4(s_0) \coloneqq \{(s_0,u_1,u_2) \in \RB^3 \mid \Psi_{\OO(-E_1), F}(s_0,u_1,u_2)=0\}, \\
        &\ell_5(s_0) \coloneqq \{(s_0,u_1,u_2) \in \RB^3 \mid \Psi_{\OO(-E_2), F}(s_0,u_1,u_2)=0\}, \\
        &\ell_6(s_0) \coloneqq \{(s_0,u_1,u_2) \in \RB^3 \mid \Psi_{\OO(-E_1), \OO(-E_2)}(s_0,u_1,u_2)=0\}.
   \end{align*}
   Let $Q_1(s_0)=(s_0,Q_1(s_0)_2,Q_1(s_0)_3)$ (resp. $Q_2(s_0)=(s_0,Q_2(s_0)_2,Q_2(s_0)_3)$) be the foot of the perpendicular from the point $c_1(s_0)\coloneqq c_{\OO(-E_1)}(s_0)$ (resp. $c_2(s_0)\coloneqq c_{\OO(-E_2)}(s_0)$)  to the line $\ell_4(s_0)$ (resp. $\ell_5(s_0) $).

   From the two claims proved above, it is enough to show that $Q_1(s_0)$ (resp. $Q_2(s_0)$) and $c_{\OO(-E_1)}(s_0)$ (resp. $c_{\OO(-E_2)}(s_0)$) are on opposite sides with respect to the line $\ell_6$ (\cref{figure:the case x+y+z=0}).

Because
\begin{align*}
       \ell_4(s_0) = 
       \left\{
       (s_0,u_1,u_2) \in \RB^3 \middle\vert
       {
       \begin{aligned}
        \frac{x((a+b)c+2ab)\sqrt{2} }{\sqrt{ab}}u_1
       +\frac{x(a-b)\sqrt{2ab\left(2ab-c^2\right)}}{ab}u_2 \\
       +x(x(a+c)+b+c)+y(y+1)(a+c)=0
       \end{aligned}
       }
       \right\},
\end{align*}
$Q_1(s_0)$ and $\Psi_{\OO(-E_1), \OO(-E_2)}(Q_1(s_0))$ are computed.
In addition, 
\begin{align*}
    &\Psi_{\OO(-E_1), \OO(-E_2)}(Q_1(s_0))=\frac{\left(x^2+x+y^2+y\right) \sqrt{2ab-c^2}}{2x} (b+c) <0, \\
     &\Psi_{\OO(-E_1), \OO(-E_2)}(c_1(s_0))=\frac{2 s_0 \left(c^2-2 a b\right)+\frac{(a-b)(a+c) \sqrt{2ab-c^2}}{b+c}}{2(a+c)^2}.
\end{align*}
$\Psi_{\OO(-E_1), \OO(-E_2)}(c_1(P_{F,1}))$ is positive
because we have
\[
\Psi_{\OO(-E_1), \OO(-E_2)}(c_1(P_{F,1})) = \frac{(a+b+2 c) \sqrt{2 a b-c^2}}{2 (a+c) (b+c)} >0.
\]
Hence, the proof is complete.
 
\end{proof}

   \begin{figure}[htbp]
   \centering

   \begin{tikzpicture} 
   \usetikzlibrary{intersections,calc,arrows.meta,angles, quotes}
   
   \coordinate(A) at(0.8,0.1);    \coordinate(B) at(4.1,0.1);
   \coordinate(C) at(2.2,-0.1);

   \draw[thick,name path=c1] (A) circle[radius=2.3cm];  
   \draw[thick,name path=c2] (B) circle[radius=2.8cm];
   \draw[thick,name path=c3] (C) circle[radius=1.6cm];

    \draw (-2.7,1)node[above]{\footnotesize $\wallpolalimtwo{\OO(-E_1)}{\OO}{H}{G}$};
   \draw (7.3,2)node[above]{\footnotesize $\wallpolalimtwo{\OO(-E_2)}{\OO}{H}{G}$};
    \draw (4,1)node[above]{\footnotesize $\wallpolalimtwo{F}{\OO}{H}{G}$};
   
   \path[name intersections={of= c1 and c2, by = {c12-1,c12-2}}];
   \path[name intersections={of= c1 and c3,by = {c13-1,c13-2}}];
   \path[name intersections={of= c2 and c3,by = {c23-1,c23-2}}];
   
   \draw[fill=black](A) circle (1 pt) node[above left]{\small $c_1(s_0)$};
   \draw[fill=black](B) circle (1 pt)node[above right]{\small $c_2(s_0)$};
   
   \draw[thick, name path= l5]($(c12-1)!1.2!(c12-2)$) -- ($(c12-2)!1.2!(c12-1)$) node [above]{\footnotesize $\ell_6(s_0)$};
    \draw[thick,name path= l3] ($(c13-1)!1.2!(c13-2)$) -- ($(c13-2)!1.2!(c13-1)$) node [right]{\footnotesize $\ell_4(s_0)$};
    \draw[thick,name path= l4] ($(c23-1)!1.3!(c23-2)$) -- ($(c23-2)!1.2!(c23-1)$)node [left]{\footnotesize $\ell_5(s_0)$};
    
    \draw (A)-- ($(c13-1)!(A)!(c13-2)$);
    \draw (B)-- ($(c23-1)!(B)!(c23-2)$);
     
     \draw[fill=black]($(c13-1)!(A)!(c13-2)$) circle (1 pt) 
     ;
     \draw[fill=black]($(c23-1)!(B)!(c23-2)$) circle (1 pt) 
     ;
     
     \coordinate(D) at (c13-1);
     \coordinate(E) at ($(c13-2)!(A)!(c13-1)$);
     \draw pic[draw=black, angle radius=0.2cm] {right angle=A--E--D};

     \coordinate(F) at (c23-1);
     \coordinate(G) at ($(c23-2)!(B)!(c23-1)$);
     \draw pic[draw=black, angle radius=0.2cm] {right angle=B--G--F};
     
   \end{tikzpicture}
   
   \caption{The case $x+y+z=0$}\label{figure: rank 1 stability case x+y+z=0S }
   \label{figure:the case x+y+z=0}
   
\end{figure}

\subsection{Classification of weakly destabilizing walls}\label{subsection: weakly destabilizing wall}
Before proceeding to the induction step, we recall the classification result of the weakly destabilizing wall $\wallpolalimtwo{F}{\OO}{H}{G}$.
Let $F\subset\OO$ be an object in $\AC_{D, H}$ with $\ch F = (r, hH+gG+\alpha, c)$.
Then, the equation of $\wallpolalimtwo{F}{\OO}{H}{G}$ is given by 
\[
    h(s^2+u^2) -2gsu-2cs = 0.
\]

\begin{rem}
    Let $P\coloneqq -\frac{2c}{g^2-h^2}(h, g)$. 
    Then, $(0, 0)$ and $P$ lie on $\wallpolalimtwo{F}{\OO}{H}{G}$. 
\end{rem}

\begin{prop}[{\cite[Section 4.1.]{arcara_miles_2016_bridgeland_stability_of_line_bundles_on_surfaces}}]\label{proposition: classification of destabilizing walls}
    With the above setting, 
    let $\Delta = 4(g^2-h^2)$ be the discriminant.
    Then, the weakly destabilizing walls of the form $\wallpolalimtwo{F}{\OO}{H}{G}$ can be classified into the following cases:
    \begin{table}[h]
        \centering
        \begin{tabular}{c|c|c|c}
        $\Delta$& c & $\mathrm{Type}$ & $\mathrm{Figure}$ \\ \hline
        $0$ &  $>0$ & $\mathrm{parabola}$ & \\ \hline
        $<0$ & $>0$ & $\mathrm{ellipse}$ & \\ \hline
        & $0$ & $\mathrm{cone}$ & \\
        $> 0$& $>0$ & $\mathrm{right\ hyperbola}$ & \cref{figure: right hyperbola}\\
        &$<0$& $\mathrm{left\ hyperbola}$ & \cref{figure: left hyperbola}
        \end{tabular}
        \vspace{3mm}
        \caption{Classification of the weakly destabilizing wall $\wallpolalimtwo{F}{\OO}{H}{G}$}
        \label{table: classification of weakly destabilizing wall}
    \end{table}
    
    Here, the term right (resp. left) of hyperbolas means that the point $P$ is located to the right (resp. left) of the $s$-axis.
\end{prop}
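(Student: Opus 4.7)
My plan is to treat the wall equation $h(s^2+u^2)-2gsu-2cs=0$ as a plane conic in the $(s,u)$-plane and classify it by the standard invariants of a second-degree curve, then pin down the remaining discrete datum (right vs.\ left for hyperbolas) by locating a second distinguished point on the wall.

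First I would put the equation into matrix form with the symmetric matrix
\[
M=\begin{pmatrix} h & -g & -c\\ -g & h & 0\\ -c & 0 & 0\end{pmatrix}.
\]
The $2\times 2$ top-left minor equals $h^2-g^2=-\Delta/4$ and governs the type of the quadratic part (ellipse, parabola, hyperbola), while $\det M=-hc^2$ detects degeneracy. Under $hc\neq 0$, the sign of $\Delta$ immediately sorts the wall into the ellipse ($\Delta<0$), parabola ($\Delta=0$), and hyperbola ($\Delta>0$) rows of the table.

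Next I would handle the degenerate cases. When $c=0$ the equation reduces to the homogeneous quadratic $h(s^2+u^2)-2gsu=0$; for $\Delta>0$ this factors as a pair of distinct lines through the origin, giving the ``cone'' entry of the table. The remaining degenerate regimes ($c=0$ with $\Delta\le 0$, or $h=0$) yield a single point, a double line, or a pair of lines one of which is $s=0$; these should be excluded as weakly destabilizing walls by invoking \cref{lemma: induction start with 1}, which forces a rank-one subobject $F\subset\OO$ to satisfy $\rk(F)>0$ (so $h\neq 0$) and $F\not\cong\OO$ (so $c\neq 0$).

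To separate the two hyperbola cases, I would use the second distinguished point $P=-\tfrac{2c}{g^2-h^2}(h,g)$ of the preceding remark, which lies on the wall by direct substitution and sits on the branch opposite to the origin. Once the paper's sign conventions are fixed, the $s$-coordinate $-\tfrac{2ch}{g^2-h^2}$ of $P$ has sign opposite to that of $c$, so $c>0$ and $c<0$ match the two named cases exactly. The main obstacle will be this last step: aligning the geometric right/left convention of the paper with the computed location of $P$, and carefully justifying that the parameter regions absent from the table (in particular $\Delta\le 0$ with $c\le 0$) genuinely cannot arise for walls realised by a subobject $F\subset\OO$ in $\AC_{D,tH}$. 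The conic-classification itself is routine linear algebra.
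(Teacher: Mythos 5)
The paper itself offers no proof of this proposition: it is imported verbatim from Arcara--Miles \cite[Section 4.1]{arcara_miles_2016_bridgeland_stability_of_line_bundles_on_surfaces}, so your proposal can only be measured against the intended argument there, and your overall route --- classify the conic $h(s^2+u^2)-2gsu-2cs=0$ by the invariants $h^2-g^2=-\Delta/4$ and $\det M=-hc^2$, then read off right versus left from the second base point $P$ --- is exactly that argument. The computations you do carry out are correct ($P$ does lie on the wall, the $c=0$, $\Delta>0$ case does split into two lines through the origin).

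There are, however, two concrete gaps. First, your sign claim for $P$ is backwards: $P_s=-\tfrac{2ch}{g^2-h^2}$ has the \emph{same} sign as $c$ when $\Delta>0$, not the opposite one, because $h=c_1(F).H<0$ for every weakly destabilizing wall. Your tentative statement implicitly assumes $h>0$ and would assign ``left'' to $c>0$, contradicting the table. The missing ingredient is precisely the inequality $h<0$: by \cref{lemma: induction start with 1} one has $0\to H^{-1}(Q)\to F\to I_Z(-C)\to 0$ with $H^{-1}(Q)\in\FC_{D,H}$, so $c_1(F).H\le (r-1)s-C.H<0$ in the region $s<0$ (the only exception being $F=I_Z$, whose wall is the line $s=0$ and hence never weakly destabilizing there). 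This same inequality is what disposes of the rows absent from the table: for $\Delta\le 0$ and $c<0$ the conic through the origin and $P$ lies entirely in $s\ge 0$ and so cannot meet the region $s<0$ where $F\subset\OO$. You flag this alignment as ``the main obstacle'' but do not resolve it, and as stated your conclusion comes out with the wrong parity. Second, the parenthetical justifications for discarding the degenerate cases are non sequiturs: $\rk(F)>0$ does not give $h\ne 0$ (again $F=I_Z$ is a rank-one counterexample with $h=g=0$), and $F\not\cong\OO$ does not give $\ch_2(F)\ne 0$ --- indeed the table explicitly retains $c=0$ as the cone case. Both defects are repairable by the single observation $h<0$, but as written the proof does not close.
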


\begin{figure}[h]
\centering
\begin{minipage}[b]{0.49\columnwidth}
    \centering
    \includegraphics[width=0.9\columnwidth]{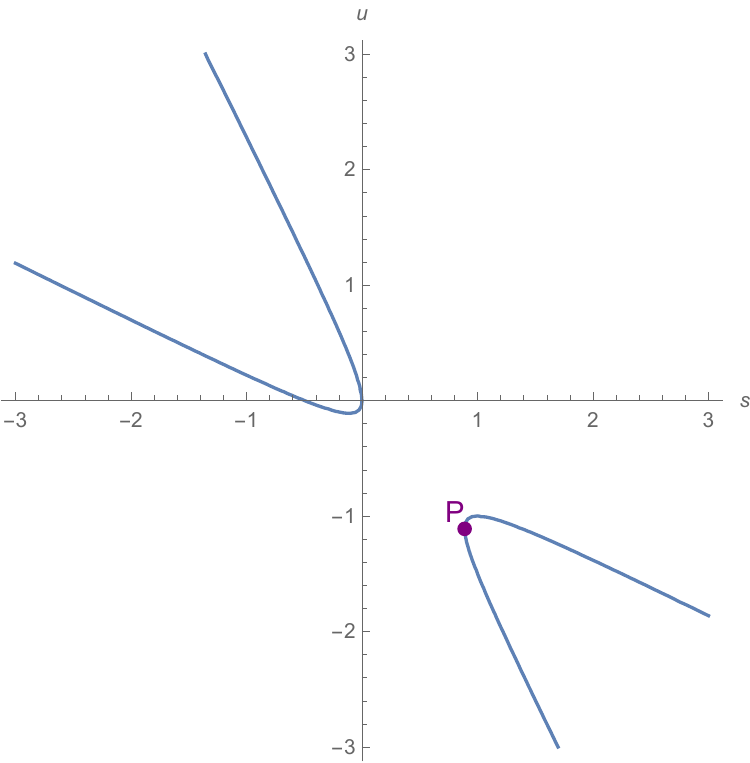}
    \caption{Right Hyperbola}
    \label{figure: right hyperbola}
\end{minipage}
\begin{minipage}[b]{0.49\columnwidth}
    \centering
    \includegraphics[width=0.9\columnwidth]{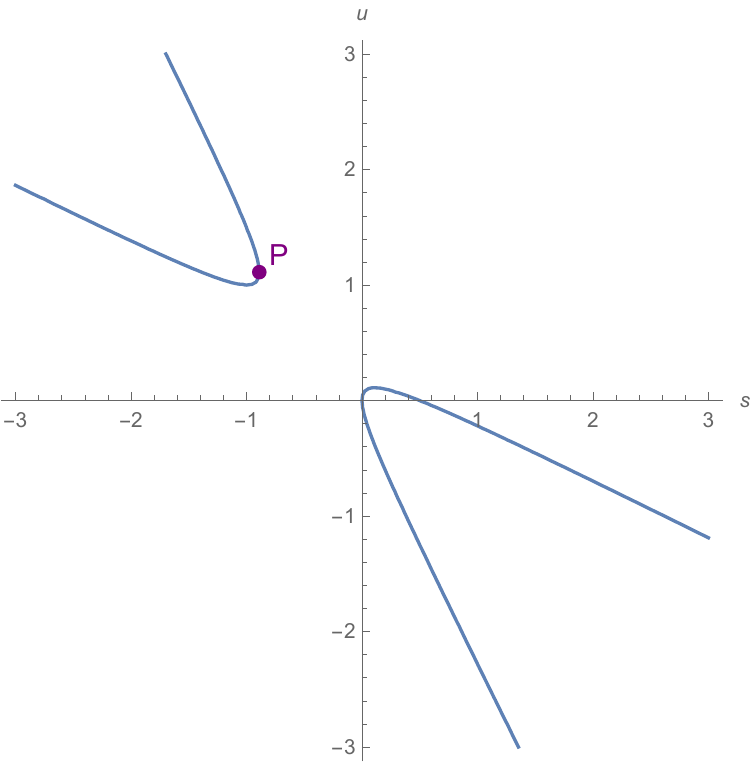}
    \caption{Left Hyperbola}
    \label{figure: left hyperbola}
\end{minipage}
\end{figure}


\subsection{Higher rank case}\label{subsection: line bundle stability higher rank}
First, we introduce some notation and Bertram's lemma (\cref{lemma: Bertram's lemma for line bundle}) and its consequences that are essential for our inductive discussion.
Let $H$ be an ample $\RB$-divisor and $G$ be an $\RB$-divisor such that $H.G=0$ and $H^2=-G^2=1$.

\begin{lem}[Bertram's Lemma for structure sheaf {\cite[Lemma 4.7.]{arcara_miles_2016_bridgeland_stability_of_line_bundles_on_surfaces}}]\label{lemma: Bertram's lemma for line bundle}
    Let $\sigma_0 = \sigma_{s_0H+u_0G, t_0H}$ be a divisorial stability condition and
    $F$ be a subobject of $\OO$ in $\AC_{0}$ such that $\sigma_0\in\wallpolatwo{F}{\OO}{H}{G}$.
    Then, 
    \begin{enumerate}
        \item if  
        \[
        \wallpolatwo{F}{\OO}{H}{G}\cap \Pi_{u_0} \cap \{\sigma\in\SC_{H, G} \mid s=\mu_H(F/F_{n-1})\} \neq \emptyset, 
        \]
        for $t>0$, then $F_{n-1}\subset \OO$ in $\AC_{0}$ and $\beta(F_{n-1})>\beta(F)$, and 
        \item if
        \[
        \wallpolatwo{F}{\OO}{H}{G}\cap \Pi_{u_0} \cap \{\sigma\in\SC_{H, G} \mid s=\mu_H(H^{-1}(Q)_1)\} \neq \emptyset,
        \]
        for $t>0$, then $F/(H^{-1}(Q)_1)\subset \OO$ in $\AC_{0}$ and $\beta(F/(H^{-1}(Q)_1))>\beta(F)$.
    \end{enumerate}
    Note that $F$ is not $\mu_H$-semistable in the former case. 
\end{lem}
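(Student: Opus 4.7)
The plan is to adapt the argument of \cite[Lemma 4.7]{arcara_miles_2016_bridgeland_stability_of_line_bundles_on_surfaces} to our slice setting. The central observation is that as we move along the wall $\wallpolatwo{F}{\OO}{H}{G}$ within the slice $\Pi_{u_0}$, the Harder-Narasimhan piece $F_{n-1}$ should surface as a sharper destabilizer exactly at the parameter value where the smallest $\mu_H$-slope quotient of $F$ leaves the torsion part of $\AC$.

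First I would set up the $\mu_H$-HN filtration $0 = F_0 \subset F_1 \subset \cdots \subset F_n = F$ and write $m_1 \coloneqq \mu_H(F/F_{n-1})$. By \cref{lemma: induction start with 1}, $F$ is a sheaf, so $F \in \TC_{D_0, H}$ forces $m_1 > s_0$. The hypothesis of part~(1) guarantees a point $\sigma_1 = \sigma_{m_1 H + u_0 G, t_1 H}$ on the semicircle $\wallpolatwo{F}{\OO}{H}{G} \cap \Pi_{u_0}$, which is a genuine semicircle by \cref{Theorem: Bertram nested wall theorem}. At $\sigma_1$, the $\mu_H$-semistable sheaf $F/F_{n-1}$ satisfies $\Im Z_{\sigma_1}(F/F_{n-1}) = 0$; additivity of $Z$ on the short exact sequence $0 \to F_{n-1} \to F \to F/F_{n-1} \to 0$ then determines $Z_{\sigma_1}(F_{n-1})$ in terms of $Z_{\sigma_1}(F)$ and $\Re Z_{\sigma_1}(F/F_{n-1})$. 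Note that this setting forces $n \geq 2$, so $F$ is not $\mu_H$-semistable, explaining the remark.

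Next I would verify directly that $F_{n-1}$ is a subobject of $\OO$ in $\AC_0$. The HN quotients of $F_{n-1}$ have $\mu_H$-slopes at least $\mu_H(F_{n-1}/F_{n-2}) > m_1 > s_0$, so $F_{n-1} \in \TC_{D_0, H} \subset \AC_0$. For the cokernel, the sheaf-level exact sequence
\begin{equation*}
0 \to F/F_{n-1} \to \OO/F_{n-1} \to \OO/F \to 0,
\end{equation*}
combined with $F/F_{n-1} \in \AC_0$ (again via $m_1 > s_0$) and $\OO/F \in \AC_0$ (the hypothesis that $F \subset \OO$ in $\AC_0$), yields $\OO/F_{n-1} \in \AC_0$ by the extension-closedness of the heart. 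Hence the sheaf-level inclusion $F_{n-1} \hookrightarrow \OO$ is a subobject relation in $\AC_0$.

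Finally, to compare Bridgeland slopes, the strategy is to show $\sigma_1$ lies strictly in the interior of $\wallpolatwo{F_{n-1}}{\OO}{H}{G}$, so that Bertram's nested wall theorem forces $\wallpolatwo{F_{n-1}}{\OO}{H}{G} \cap \Pi_{u_0}$ to nest strictly outside $\wallpolatwo{F}{\OO}{H}{G} \cap \Pi_{u_0}$, placing $\sigma_0$ in its interior and yielding $\beta_{\sigma_0}(F_{n-1}) > \beta_{\sigma_0}(F)$. The hardest step I expect is pinning down the sign of $\Re Z_{\sigma_1}(F/F_{n-1})$ so that the strict inequality $\beta_{\sigma_1}(F_{n-1}) > \beta_{\sigma_1}(\OO)$ comes out in the desired direction; this should follow from the Bogomolov inequality for the $\mu_H$-semistable sheaf $F/F_{n-1}$ together with the fact that it sits on the boundary between $\TC$ and $\FC[1]$ at $\sigma_1$. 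Part~(2) is obtained by a dual argument applied to the complex $Q = \Cone(F \to \OO)$: the torsion-free sheaf $H^{-1}(Q)$ plays the role of $F$, its minimal-slope HN piece $H^{-1}(Q)_1$ plays the role symmetric to $F/F_{n-1}$, and the same wall-moving/nesting argument delivers the conclusion for the quotient $F/H^{-1}(Q)_1$.
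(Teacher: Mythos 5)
The paper does not prove this lemma itself---it is imported verbatim from Arcara--Miles \cite[Lemma 4.7]{arcara_miles_2016_bridgeland_stability_of_line_bundles_on_surfaces}---and your reconstruction follows exactly the argument of that source: pass to the point $\sigma_1$ of the semicircular wall where $\Im Z(F/F_{n-1})=0$ (resp.\ $\Im Z(H^{-1}(Q)_1)=0$), use additivity of $Z$ and positivity of the real part on the boundary of the tilt to get $\beta_{\sigma_1}(F_{n-1})>\beta_{\sigma_1}(F)=\beta_{\sigma_1}(\OO)$, and propagate the inequality back to $\sigma_0$ via the nested wall theorem, with the subobject relation checked by extension-closedness of the heart. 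The one step you leave as a claim---that $\Re Z_{\sigma_1}(F/F_{n-1})>0$ for the $\mu_H$-semistable sheaf sitting on the boundary between $\TC_{D,H}$ and $\FC_{D,H}[1]$---is precisely the Bogomolov/Hodge-index computation that makes $Z_{D,tH}$ a stability function on $\AC_{D,H}$, so the plan is sound and complete in outline.
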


The following two lemmas are essentially the consequences of Bertram's lemma \cref{lemma: Bertram's lemma for line bundle} and are proven in \cite{arcara_miles_2016_bridgeland_stability_of_line_bundles_on_surfaces}.

\begin{lem}[{\cite[Lemma 5.6.]{arcara_miles_2016_bridgeland_stability_of_line_bundles_on_surfaces}}]\label{lemma: Arcara Miles 5.6 not Left Hyperbola case}
    Let $F$ be a subobject of $\OO$ in some $\AC_{D, H}$.
    Assume that the wall $\wallpolalimtwo{F}{\OO}{H}{G}$ is a weakly destabilizing wall and is not a left hyperbola.
    Then, there exists at least one Harder-Narasimhan factor $F_i$ 
    such that 
    \begin{itemize}
    \item the wall $\wallpolalimtwo{F_i}{\OO}{H}{G}$ is left hyperbola and 
    \item the following statement holds: 

    if there exists a stability condition $\sigma_0= \sigma_{s_0H+u_0G, t_0H}$ such that $F\subset\OO$ in $\AC_0$, then $E_i\subset\OO$ in $\AC_0$, and 
    $\wallpolatwo{F}{\OO}{H}{G}\preceq_{u_0} \wallpolatwo{F_i}{\OO}{H}{G}$. 
    \end{itemize}
    In particular, $F$ cannot actually destabilize $\OO$ anywhere.
\end{lem}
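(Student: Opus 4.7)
The plan is to combine the classification in \cref{proposition: classification of destabilizing walls} with Bertram's Lemma \cref{lemma: Bertram's lemma for line bundle} and the Nested Wall Theorem \cref{Theorem: Bertram nested wall theorem}, running a descending induction on the rank of the candidate subobject.

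First I would analyze the geometry of $\wallpolalimtwo{F}{\OO}{H}{G}$ in each of the non-left-hyperbola cases (parabola, ellipse, cone, right hyperbola). The common feature is that, in any vertical slice $\Pi_{u_0}$, the semicircular trace of such a wall extends sufficiently far along the $s$-axis to meet either the line $s=\mu_H(F/F_{n-1})$ (when $F$ is not $\mu_H$-semistable) or the line $s=\mu_H(H^{-1}(Q)_1)$, where $Q=\Cone(F\to\OO)$. This is exactly the geometric hypothesis of \cref{lemma: Bertram's lemma for line bundle}, and so Bertram's Lemma produces a proper subobject $F'\subset\OO$ in $\AC_{D,H}$ satisfying $\beta(F')>\beta(F)$, where $F'$ is either a term $F_{n-1}$ of the Harder--Narasimhan filtration of $F$ or a quotient of the form $F/H^{-1}(Q)_1$.

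Next I would invoke the Nested Wall Theorem to conclude $\wallpolatwo{F}{\OO}{H}{G}\preceq_{u_0}\wallpolatwo{F'}{\OO}{H}{G}$. If the wall of $F'$ happens to be a left hyperbola, we are done by setting $F_i=F'$. Otherwise, I repeat the argument with $F'$ replacing $F$. Since $\rk F'<\rk F$, this process terminates; by \cref{lemma: induction start with 1}, the rank-one terminus is a torsion-free sheaf of the form $I_Z\otimes\OO(-C)$ with $C$ an effective negative curve, and any weakly destabilizing wall of such a subobject is forced into the left-hyperbola case of the classification. The desired Harder--Narasimhan factor $F_i$ is then this terminal object, and transitivity of $\preceq_{u_0}$ along the chain of nested walls gives the ordering $\wallpolatwo{F}{\OO}{H}{G}\preceq_{u_0}\wallpolatwo{F_i}{\OO}{H}{G}$.

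The main obstacle is the first step: verifying concretely that each of the four non-left-hyperbola wall shapes actually reaches the required vertical lines in the slice $\Pi_{u_0}$. This demands a careful case analysis of the signs of the coefficients $h,g,c$ in the wall equation $h(s^2+u^2)-2gsu-2cs=0$, balanced against the constraints $\mu_H(H^{-1}(Q)_1)\leq D.H<\mu_H(F/F_{n-1})$ coming from \cref{lemma: subobject and filtration}; the very definition of \emph{left} hyperbola (the point $P$ lies on the negative $s$-axis) is precisely what prevents the wall from reaching those vertical lines on the correct side, so excluding that case is what opens the door to Bertram's Lemma. The last assertion, that $F$ cannot actually destabilize $\OO$ anywhere, then follows at once: any $\sigma\in\wallpolatwo{F}{\OO}{H}{G}$ with $F\subset\OO$ in $\AC_\sigma$ lies strictly inside the larger wall $\wallpolatwo{F_i}{\OO}{H}{G}$, where $F_i\subset\OO$ with $\beta(F_i)>\beta(\OO)$, so $\OO$ fails to be $\sigma$-semistable.
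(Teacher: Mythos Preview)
The paper does not provide its own proof of this lemma; it is imported directly from \cite[Lemma~5.6]{arcara_miles_2016_bridgeland_stability_of_line_bundles_on_surfaces} and invoked as a black box in the proof of \cref{proposition: maximal destabilize}. There is therefore no in-paper argument to compare your sketch against.

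Your outline has the right architecture (Bertram's Lemma plus a descent on rank), but two points need tightening before it actually delivers the stated conclusion. First, the lemma promises a Harder--Narasimhan \emph{term} $F_i$ of the original $F$. If at any stage you invoke part~(2) of \cref{lemma: Bertram's lemma for line bundle} and pass to $F/H^{-1}(Q)_1$, the resulting object is a subobject of $\OO$ but no longer a member of the HN filtration of $F$, so the literal statement does not follow. The argument in the cited source works because each non-left-hyperbola shape is shown to meet the vertical line $s=\mu_H(F/F_{n-1})$ in the slice $\Pi_{u_0}$; only part~(1) ever fires, and the descent stays inside the fixed chain $F=F_n\supset F_{n-1}\supset\cdots$. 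Your ``main obstacle'' paragraph gestures at this but does not commit to using part~(1) exclusively.

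Second, your termination step is not justified as written. \cref{lemma: induction start with 1} tells you only that the rank-one image is $I_Z\otimes\OO(-C)$ with $C$ effective; it does \emph{not} give $C^2<0$, and even when $C^2<0$ the discriminant $\Delta=4(g^2-h^2)$ in a given slice $(H,G)$ need not be positive. In the source the iteration simply halts at the first $i$ for which $\wallpolalimtwo{F_i}{\OO}{H}{G}$ is a left hyperbola, and the existence of such an $i$ is argued directly from the geometry of the walls rather than by running all the way down to rank one.
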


\begin{lem}[{\cite[Lemma 5.7.]{arcara_miles_2016_bridgeland_stability_of_line_bundles_on_surfaces}}]\label{Lemma: Arcara Miles 5.7 Left Hyperbola case}
    Let $F$ be a subobject of $\OO$ in $\AC_{D,H}$ with the quotient $Q$ and $\{F_i\}$ the factors in the Harder-Narasimhan filtration of $H^{-1}(Q)$.
    Assume that the wall $\wallpolalimtwo{F}{\OO}{H}{G}$ is a weakly destabilizing left hyperbola.
    Then, one of the followings hold:
    \begin{itemize}
        \item $C\coloneqq c_1(H^{0}(Q))$ is a curve of negative self-intersection and the wall $\wallpolalimtwo{\OO(-C)}{\OO}{H}{G}$ is a left hyperbola, or
        \item there exists a Harder-Narasimhan factor $Q_i$ such that
        \begin{itemize}
        \item $\wallpolalimtwo{F/F_i}{\OO}{H}{G}$ is not a left hyperbola, and 
        \item the following statement holds: 
        
        if there exists $\sigma_0 = \sigma_{s_0H+u_0G, t_0H} = (Z_0, \AC_0)\in\Stab(X)$ such that $F\subset \OO$ in $\AC_0$, then $F/Q_i\subset\OO$ in $\AC_0$, and the wall $\wallpolatwo{F}{\OO}{H}{G} \preceq_{u_0} \wallpolatwo{F/Q_i}{\OO}{H}{G}$.
        \end{itemize}
    \end{itemize}
    In particular, in the latter case, $F$ cannot actually destabilize $\OO$ anywhere.
\end{lem}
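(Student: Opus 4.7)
The plan is to split on whether $H^{-1}(Q)$ vanishes or not, using the cohomology long exact sequence
\[
0 \to H^{-1}(Q) \to F \to \OO \to H^{0}(Q) \to 0
\]
of the triangle $F \to \OO \to Q$. By \cref{lemma: induction start with 1}, $H^{0}(Q)$ is a rank-zero quotient of $\OO$ whose kernel is $I_{Z}\otimes \OO(-C)$ for some effective divisor $C = c_{1}(H^{0}(Q))$ and zero-dimensional subscheme $Z$, so the image of $F$ in $\OO$ is always of this form.

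If $H^{-1}(Q)=0$, then $F \cong I_{Z}\otimes \OO(-C)$. The wall equations of $F$ and $\OO(-C)$ share the same quadratic and $u$-linear coefficients, since these depend only on the rank and $c_{1}$, and differ only in the constant coefficient by $2\ell(Z)$. Hence they have the same discriminant, and the left-hyperbola hypothesis on $F$, combined with \cref{proposition: classification of destabilizing walls}, forces $\Delta>0$ and $\ch_{2}(F)<0$. Using $\ch_{2}(\OO(-C)) = C^{2}/2$ and $\ch_{2}(F) = C^{2}/2 - \ell(Z)$, one extracts $C^{2}<0$, and the same classification confirms that $\wallpolalimtwo{\OO(-C)}{\OO}{H}{G}$ remains a left hyperbola, giving the first bullet of the conclusion.

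Otherwise $H^{-1}(Q)\neq 0$. Let $Q_{1}\subset Q_{2}\subset\cdots\subset Q_{m}=H^{-1}(Q)$ be the $\mu_{H}$-HN filtration, regarded as a chain of subobjects of $F$ via the inclusion $H^{-1}(Q)\hookrightarrow F$. Because the wall for $F$ is a left hyperbola through the origin extending into $\{s<0\}$, and the HN slopes $\mu_{H}(Q_{i}/Q_{i-1})$ decrease through a range bounded above by $D.H$, at least one vertical line $\{s = \mu_{H}(Q_{i}/Q_{i-1})\}$ meets the wall inside the slice $\Pi_{u_{0}}$. For the smallest such $i$, \cref{lemma: Bertram's lemma for line bundle}(2) produces an inclusion $F/Q_{i}\subset \OO$ in $\AC_{0}$ with $\beta(F/Q_{i})>\beta(F)$, and \cref{Theorem: Bertram nested wall theorem} yields the nesting $\wallpolatwo{F}{\OO}{H}{G}\preceq_{u_{0}}\wallpolatwo{F/Q_{i}}{\OO}{H}{G}$.

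The main obstacle will be verifying that the enlarged wall $\wallpolalimtwo{F/Q_{i}}{\OO}{H}{G}$ is no longer a left hyperbola. The strategy is to track how $\rk$, $c_{1}$, and $\ch_{2}$ transform when the HN piece $Q_{i}$ is removed from $F$, feed the resulting coefficients back into \cref{proposition: classification of destabilizing walls}, and exploit the minimality of $i$ to show that the center $P_{F/Q_{i}}$ either crosses into $\{s>0\}$ or that the discriminant changes sign, in either case eliminating the left-hyperbola type. Once this is in hand, \cref{lemma: Arcara Miles 5.6 not Left Hyperbola case} applied to $F/Q_{i}$ precludes $F/Q_{i}$ from actually destabilizing $\OO$, and the nesting then precludes $F$ itself from actually destabilizing $\OO$, yielding the final assertion.
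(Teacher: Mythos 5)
The paper itself offers no proof of this lemma: it is imported verbatim from Arcara--Miles, with only the remark that it is ``essentially a consequence of Bertram's lemma,'' so your attempt has to stand on its own. It does not, for two reasons. The most serious is that the crux of the statement --- showing that the wall $\wallpolalimtwo{F/Q_i}{\OO}{H}{G}$ is \emph{not} a left hyperbola --- is exactly the step you label ``the main obstacle'' and for which you only describe a strategy (``track how $\rk$, $c_1$, $\ch_2$ transform\dots and exploit the minimality of $i$''). Nothing is computed or verified there, and without it the second bullet of the conclusion is simply not established.

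The second problem is structural: your case split on $H^{-1}(Q)=0$ versus $H^{-1}(Q)\neq 0$ is not the dichotomy of the lemma, and in the second branch you set out to prove the second bullet unconditionally, which cannot work. Nothing in that branch excludes the configuration where $H^{-1}(Q)\neq 0$ is $\mu_H$-semistable and $C=c_1(H^0(Q))$ satisfies $C^2<0$: then the only Harder--Narasimhan factor is $Q_1=H^{-1}(Q)$, and $F/Q_1\cong I_Z(-C)$ has the same $c_1$ (hence the same discriminant) as $\OO(-C)$ and $\ch_2=C^2/2-\ell(Z)<0$, so by \cref{proposition: classification of destabilizing walls} its wall is \emph{still} a left hyperbola; the second bullet is false there and it is the first bullet that holds. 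The correct division, as in Arcara--Miles, is on whether $\wallpolalimtwo{\OO(-C)}{\OO}{H}{G}$ is a left hyperbola (essentially on the sign of $C^2$), not on the vanishing of $H^{-1}(Q)$. Two smaller points: in your rank-one case, $C^2<0$ follows from the discriminant condition $\Delta>0$ (which forces $c_1(F)^2<0$), not from $\ch_2(F)<0$ as you write --- the latter only gives $C^2<2\ell(Z)$; and your assertion that some vertical line $\{s=\mu_H(Q_i/Q_{i-1})\}$ must meet the wall inside the given slice $\Pi_{u_0}$ is unjustified, since the left endpoint of the semicircle $\wallpolatwo{F}{\OO}{H}{G}\cap\Pi_{u_0}$ may lie to the right of $\mu_H(H^{-1}(Q)_1)$; making the left-hyperbola hypothesis do quantitative work at exactly this point is where the real content of the Arcara--Miles argument lies.
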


\begin{prop}\label{proposition: maximal destabilize}
    Assume that there exists a torsion-free sheaf $F$ which destabilizes $\OO$ at $\sigma_{D, tH}\in\SC_{H, G}$. 
    Then, at least one of $E_1$, $E_2$, or $E_3(=E)$ satisfies
    $\OO(-E_{i})\subset \OO$ in the abelian category $\AC_{D,H}$ and $\beta(\OO(-E_i))\ge\beta(F)$ at $\sigma_{D, tH}$.
\end{prop}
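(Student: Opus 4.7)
The plan is to prove the proposition by induction on $\rk F$, reducing each case to the rank one analysis already carried out in \cref{proposition:rank1_z<0} and \cref{proposition:rank1_z>=0}. For the base case $\rk F = 1$, \cref{lemma: induction start with 1} forces $F \simeq I_Z \otimes \OO(-C)$ for some effective $C$ with $C^2<0$ and a zero-dimensional subscheme $Z$, so $\wallpolalim{F}{\OO}{H}{G_1}{G_2} = \wallpolalim{\OO(-C)}{\OO}{H}{G_1}{G_2}$. On $X = \Bl_{p,q}\PPT$ the effective cone is generated by $E_1, E_2, E$, hence $C = -xC_1 - yC_2 - zE$ falls under the hypotheses of the rank one propositions. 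These produce an index $i \in \{1,2,3\}$ for which $\wallintpolalim{\OO(-E_i)}{\OO}{H}{G_1}{G_2}$ contains $\wallintpolalim{F}{\OO}{H}{G_1}{G_2}$ in the region $s<0$ (the only relevant region by \cref{remark: consider only the case of s<0}). Combining \cref{Theorem: Bertram nested wall theorem} with the fact that $\sigma_{D,tH}$ lies on the wall of $F$, I conclude that $\sigma_{D,tH}$ lies inside the wall of $\OO(-E_i)$, which yields $\OO(-E_i) \subset \OO$ in $\AC_{D,H}$ with $\beta(\OO(-E_i)) \geq \beta(F)$.

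For the inductive step, assume $\rk F \geq 2$ and the statement holds for all torsion-free subobjects of rank strictly smaller than $\rk F$. Restricting to a two-parameter slice $\SC_{H,G}$ through $\sigma_{D,tH}$ allows me to apply the wall classification \cref{proposition: classification of destabilizing walls}. If $\wallpolalimtwo{F}{\OO}{H}{G}$ is not a left hyperbola, \cref{lemma: Arcara Miles 5.6 not Left Hyperbola case} shows $F$ cannot actually destabilize $\OO$ anywhere, contradicting hypothesis; hence the wall must be a left hyperbola, and \cref{Lemma: Arcara Miles 5.7 Left Hyperbola case} applies. Case (b) of that lemma again contradicts actual destabilization via \cref{lemma: Arcara Miles 5.6 not Left Hyperbola case}, so case (a) must hold: $C \coloneqq c_1(H^0(Q))$ is a negative self-intersection curve and $\wallpolalimtwo{\OO(-C)}{\OO}{H}{G}$ is a left hyperbola containing the wall of $F$. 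By nested walls, $\OO(-C)$ is then a rank one subobject of $\OO$ at $\sigma_{D,tH}$ with $\beta(\OO(-C)) \geq \beta(F)$, and applying the base case to $\OO(-C)$ produces the desired $E_i$ with $\beta(\OO(-E_i)) \geq \beta(\OO(-C)) \geq \beta(F)$.

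The main obstacle is the interplay between the three-parameter slice $\SC_{H,G_1,G_2}$, where the rank one wall comparisons of \cref{proposition:rank1_z<0} and \cref{proposition:rank1_z>=0} live, and the two-parameter slices $\SC_{H,G}$ needed to invoke the Arcara--Miles wall classification together with their Bertram-type lemmas. One must verify carefully that the auxiliary divisor $G$ can always be chosen so that the slice through $\sigma_{D,tH}$ preserves the relevant wall containments produced by the rank one propositions, and that cases (a) and (b) of \cref{Lemma: Arcara Miles 5.7 Left Hyperbola case} genuinely exhaust the possibilities when $\rk F$ is large; this three-parameter adaptation of the rank one nested wall machinery is what makes the Picard rank three case more delicate than the lower Picard rank settings treated in \cite{arcara_miles_2016_bridgeland_stability_of_line_bundles_on_surfaces, arcara_miles_2017_projectivity_of_bridgeland_moduli_spaces_on_del_pezzo_surfaces_of_picard_rank_2}.
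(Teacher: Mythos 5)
Your base case and the overall inductive framework match the paper, but the inductive step has a genuine gap at its central claim. From case (a) of \cref{Lemma: Arcara Miles 5.7 Left Hyperbola case} you conclude that $\wallpolalimtwo{\OO(-C)}{\OO}{H}{G}$ is a left hyperbola \emph{containing the wall of $F$}, and hence that $\beta(\OO(-C))\ge\beta(F)$ at $\sigma_{D,tH}$. Neither that lemma as stated nor \cref{Theorem: Bertram nested wall theorem} gives you this containment: nestedness only says that the two walls are comparable in each slice $\Pi_{u_0}$, not which one lies inside the other, and in the Arcara--Miles source the comparison $\wall{F}{\OO}\preceq\wall{\OO(-C)}{\OO}$ is only established for $|u|\gg 0$, not at the particular $u_0$ where $F$ destabilizes $\OO$. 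Establishing precisely this comparison at the given stability condition is the hard content of the proposition, and your argument assumes it. A secondary problem: the hypothesis is that $F$ \emph{destabilizes} $\OO$ (i.e.\ $F\subset\OO$ with $\beta(F)\ge\beta(\OO)$ at $\sigma_{D,tH}$), not that it \emph{actually destabilizes} in the sense of \cref{definition: weakly destabilizing wall and actually destabilizing wall}, so the conclusions ``$F$ cannot actually destabilize $\OO$ anywhere'' in \cref{lemma: Arcara Miles 5.6 not Left Hyperbola case} and in case (b) of \cref{Lemma: Arcara Miles 5.7 Left Hyperbola case} do not contradict your hypothesis; in those cases you must instead pass to the subquotient with the larger wall that those lemmas supply.

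The paper's proof avoids both problems by a different reduction. It writes $C=a_1E_1+a_2E_2+a_3E_3$ with $a_i\ge 0$ and twists: after checking (via the mechanism of \cref{Lemma: Arcara Miles 3.1}) that $F(a_1E_1)$ weakly destabilizes $\OO$ at $\sigma_{D+a_1E_1,tH}$, the quotient $Q_1=\Cone(F(a_1E_1)\to\OO)$ satisfies $c_1(H^0(Q_1))=0$, so case (a) of \cref{Lemma: Arcara Miles 5.7 Left Hyperbola case} cannot occur; the remaining alternatives (\cref{lemma: Arcara Miles 5.6 not Left Hyperbola case} or case (b)) come \emph{with} the wall comparison $\preceq_{u_0}$ and produce a Harder--Narasimhan subquotient of strictly smaller rank and larger $\beta$, to which the induction hypothesis is applied at the twisted stability condition before untwisting and invoking the rank-one results; the cases according to how many $a_i$ are nonzero are then handled by iterating the twist. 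Your proof never invokes the induction hypothesis for intermediate ranks, which is a symptom of the missing step.
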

\begin{proof}[\proofname\ of \cref{proposition: maximal destabilize}]
    For the case of $\rk F=1$, we have seen in \cref{subsection: line bundle stability rank 1}. 
    Assume $r\coloneqq \rk F >1$. 
    Recall that there exists a curve $C$ and a finite subscheme $Z\subset X$ such that the inclusion $F\hookrightarrow \OO$ factors 
    $F\twoheadrightarrow I_{Z}(-C)\hookrightarrow\OO$ and 
    $H^{0}(P)\cong\OO_{Z\cup C}$. 
    As $E_1$, $E_2$, and $E_3$ generates $\Eff(X)$, 
    we put $C = a_1E_1+a_2E_2+a_3E_3$\ ($a_1,a_2, a_3 \ge 0$). 

    We divide the proof into three cases by the number of zeros. 
    In the case which just one of the $a_i$ is non-zero, as the discussion is symmetric for $a_i$ we may assume $C= a_1E_1$, i.e. $a_1\neq 0$ and $a_2 = a_3 =0$. 
    In this case, $\OO(-C)= \OO(-a_1E_1)\subset\OO$ in $\AC_{D, H}$ holds. 
    Especially $\OO(-E_1)\subset\OO$ in $\AC_{D, H}$.
    Indeed, as $I_Z(-C)\in\AC_{D, H}$, we have
    \[
    s < \mu_{H}(I_Z(-C)) \le \mu_{H}(\OO(-C)) = -a_1H.E_1 \le -H.E_1 = \mu_H(\OO(-E_1)).
    \]
    Thus, considering  
    $B\subset\OO$ in $\AC_{D, H}$ 
    if and only if $H^{-1}(\OO/B)\in\FC_{D, H}$ and $B\in\TC_{D, H}$ (cf: \cref{lemma: subobject and filtration}), 
    it holds that $\OO(-a_1E_1), \OO(-E_1)\subset\OO$ in $\AC_{D, H}$.
    
    Assume that $\beta(F)>\beta(\OO(-E_1))$ at $\sigma_{D, tH}$ (if not, there is nothing to show).

\begin{claim}\label{Claim: in lemma maximal destabilizing}
    $F(a_1E_1)$ weakly destabilizes $\OO$ at $\sigma_{D+a_1E_1, tH}$. 
\end{claim}
\begin{proof}[\proofname\ of the Claim]
    What we need to check are 
    \begin{enumerate}
        \item $F(a_1E_1)\subset\OO$ in $\AC_{D+a_1E_1, H}$, and
        \item $\beta(F(a_1E_1)) \ge \beta(\OO)$ at $\sigma_{D+a_1E_1, tH}$.
    \end{enumerate}
    
    For the first assertion, it is enough to show that $F\subset\OO(-a_1E_1)$ in $\AC_{D, H}$ by the discussion of \cref{Lemma: Arcara Miles 3.1}.
    This follows from the following commutative squere which exists by the above discussion of $\OO(-a_1E_1)\subset\OO$:
    \[
    \begin{tikzcd}
        F \arrow[rrr, hook] \arrow[rd, two heads]& & &\OO. \\
        & I_Z(-C) \arrow[r, hook] & \OO(-C) \arrow[ru, hook] &
    \end{tikzcd}
    \]

For the other, assume that $\beta(F(a_1E_1)) < \beta(\OO))$ at $\sigma_{D+a_1E_1, tH}$.
Then, $\beta(F) < \beta(\OO(-a_1E_1)))$ at $\sigma_{D, tH}$.
Thus, $\beta(F) < \beta(\OO(-a_1E_1)) \le \beta(\OO(-E_1))$ at $\sigma_{D, tH}$.
Here, the second inequality follows from the case of $x = 0$ in \cref{proposition:rank1_z>=0}.
It contradicts to the assumption.
    \end{proof}

When the wall $\wallpolalimtwo{F(a_1E_1)}{\OO}{H}{G}$ is not left hyperbola, 
we can apply \cref{lemma: Arcara Miles 5.6 not Left Hyperbola case} as a result of the above Claim.
Therefore, there exists $K$ which is a Harder-Narasimhan factor (with respect to $\mu_H$-stability)
of $F(a_1E_1)$ such that 
\[
\beta(F(a_1E_1)) \le \beta(K)\ \text{at}\ \sigma_{D+a_1E_1, tH}.
\]
Then, at least one of $E_1$, $E_2$, and $E_3$ satisfies $\OO(-E_i)\in\AC_{D+a_1E_1, tH}$ (especially $\OO(-E_i)\subset\OO$ in $\AC_{D+a_1E_1, tH}$) and 
\[
\beta(F(a_1E_1)) \le \beta(K) \le \beta(\OO(-E_i))\ \text{at}\ \sigma_{D+a_1E_1, tH}
\]
by the assumption of induction. 
Therefore, we have $\OO(-E_i-a_1E_1)\in\AC_{D, tH}$ and 
\[
\beta(F) \le \beta(\OO(-E_i-a_1E_1)) \ \text{at}\ \sigma_{D,tH}.
\]
Using the discussion of rank $1$, we have proved this case. 

When the wall $\wallpolalimtwo{F(a_1E_1)}{\OO}{H}{G}$ is left hyperbola, 
we can apply \cref{Lemma: Arcara Miles 5.7 Left Hyperbola case}.
Let $Q_1\coloneqq \Cone(F(a_1E_1)\to \OO)$. 
As $c_1(H^0(Q_1)) = 0$ by its definition, 
the second case of \cref{Lemma: Arcara Miles 5.7 Left Hyperbola case} holds. 
There exists a Harder-Narasimhan factor $J_j$ of $H^{-1}(Q_1)$ such that 
\[
\beta(F(a_1E_1)) \le \beta(F(a_1E_1)/J_j) \ \text{at}\ \sigma_{D+a_1E_1, tH}.
\] 
Then the proof goes the same as in the non-left hyperbola case.

We now turn to the case in which two out of $a_i$ are not zero. 
It may assume $a_1,a_2\neq 0$ and $a_3=0$, that is, $C=a_1E_1+a_2E_2$.
Additionally, assume that $\beta(F) > \beta(\OO(-E_1))$.
Note that $\OO(-a_1E_1)\subset \OO$ in $\AC_{D, H}$.
Then, the same discussion as the previous case concludes that 
$F(a_1E_1)$ weakly destabilizes $\OO$ at $\sigma_{D+a_1E_1, tH}$.
It is easily seen that $F(a_1E_1)\subset\OO$ in $\AC_{D+a_1E_1, H}$ and $\OO(-E_2)\in\AC_{D+a_1E_1, H}$.
Indeed, the former follows from 
$F\hookrightarrow\OO(-C)\hookrightarrow\OO(-a_1E_1)$ in $\AC_{D, H}$ and $\OO(-a_1E_1)\in\AC_{D, H}$.
Let $Q'\coloneqq \OO/F(a_1E_1)$. 
Note that $I_Z(-a_2E_2)\in\AC_{D+a_1E_1, H}$ as it is an image of the inclusion map. 
Thus, $s < \mu_H(I_Z(-a_2E_2)) \le \mu_H(\OO(-E_2))$.
We divide the rest of the proof into two cases.
When $\beta(F(a_1E_1)) \le \beta(\OO(-E_2))$ at $\sigma_{D+a_1E_1, tH}$, it deduces
\[
    \beta(F) \le \beta(\OO(-a_1E_1-E_2))\le\beta(\OO(-E_i))
\]
at $\sigma_{D, tH}$ by the result of rank $1$ case.
When $\beta(F(a_1E_1)) > \beta(\OO(-E_2))$ at $\sigma_{D+a_1E_1, tH}$,
$F(a_1E_!+a_2E_2)$ weakly derstabilizes $\OO$ at $\sigma_{D+a_1E_1+a_2E_2, tH}$.
The rest of the proof runs as the previous case. 

    The last case is when all $a_i$ are non-zero, but the same proof as above goes well for this case, thus we omit writing out the proof of the left case. 
\end{proof}

\begin{proof}[\proofname \ of \cref{Theorem: stability of line bundle}]
    This follows from \cref{Lemma: Arcara Miles 3.1} and \cref{proposition: maximal destabilize}. 
\end{proof}

\cite[Section 6]{arcara_miles_2016_bridgeland_stability_of_line_bundles_on_surfaces} and
\cref{Theorem: stability of line bundle} implies the result. 
\begin{prop}\label{proposition: stability of shifted sheaf}
    Let $X$ be the del Pezzo surface of Picard rank $3$ and $E_{i}$ the $(-1)$-curves. 
    If $F\subset\OO_{X}[1]$ is a destabilizing object at $\sigma_{D, H}\in\divstab(X)$,
    there is a maximally destabilizing object of the form $\OO(C_i)\vert_{C_i}$.
\end{prop}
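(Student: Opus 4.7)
The plan is to reduce the destabilization of $\OO_X[1]$ to that of $\OO_X$ via a duality argument, and then invoke \cref{Theorem: stability of line bundle} directly. First, I would establish the analog of \cref{Lemma: Arcara Miles 3.1} for shifted sheaves: tensoring with $\OO(-D')$ identifies destabilizers of $\OO(D')[1]$ in $\AC_{D,H}$ with destabilizers of $\OO_X[1]$ in $\AC_{D-D',H}$, reducing the problem to $L=\OO_X$.

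Next, given a destabilizing subobject $F\subset \OO_X[1]$ with quotient $Q$ in $\AC_{D,H}$, I would analyze the structure via the long exact sequence of cohomology sheaves. Since $H^{-1}(\OO[1])=\OO$ and $H^0(\OO[1])=0$, it follows that $H^0(Q)=0$, so $Q=\widetilde{Q}[1]$ for some torsion-free sheaf $\widetilde{Q}\in\FC_{D,H}$. Rotating the distinguished triangle then produces $\OO\to \widetilde{Q}\to F\to \OO_X[1]$, so $F$ is the cone of a sheaf morphism $\OO\to \widetilde{Q}$. This exactly exchanges the role of destabilizing subobjects of $\OO_X[1]$ with that of destabilizing quotients in the non-shifted setting, and the natural exact sequence $0\to \OO\to \OO(C)\to \OO(C)|_C\to 0$ shows that $\OO(C)|_C$ admits a canonical morphism to $\OO_X[1]$ and plays the dual role of $\OO(-C)$.

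Once this duality is set up, the rank-induction argument, Bertram's lemma, and the wall classification of \cref{proposition: classification of destabilizing walls} from \cref{subsection: line bundle stability higher rank} can be mirrored on the quotient side. Combined with \cref{Theorem: stability of line bundle}, which pins down the relevant negative self-intersection curve as one of the $(-1)$-curves $E_1,E_2,E$, the maximally destabilizing object of $\OO_X[1]$ must take the form $\OO(C)|_C$ with $C\in\{E_1,E_2,E\}$. The technical core, carried out in \cite[Section 6]{arcara_miles_2016_bridgeland_stability_of_line_bundles_on_surfaces} in a general surface-theoretic setting, is to verify that the nesting of walls from \cref{Theorem: Bertram nested wall theorem} is preserved by the rotation, with interior and exterior interchanged; this is the step I anticipate will require the most bookkeeping, but it can be invoked here as a black box since the input \cref{Theorem: stability of line bundle} is already available for $X=\Bl_{p,q}\PPT$.
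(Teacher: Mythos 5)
Your proposal is correct and follows essentially the same route as the paper, whose proof is simply the one-line observation that \cite[Section 6]{arcara_miles_2016_bridgeland_stability_of_line_bundles_on_surfaces} together with \cref{Theorem: stability of line bundle} gives the result. The only cosmetic difference is that the paper phrases the exchange of subobjects and quotients via the derived dual $(-)\dual = \Rbf\HC om(-,\OO)[1]$ (sending $\sigma_{D,H}$ to $\sigma_{-D,H}$), whereas you implement the same exchange by rotating the defining triangle and observing $H^0(Q)=0$; both then defer the technical core to Arcara--Miles and feed in \cref{Theorem: stability of line bundle} to pin down $C\in\{E_1,E_2,E\}$.
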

\begin{rem}
    In \cite[Section 6]{arcara_miles_2016_bridgeland_stability_of_line_bundles_on_surfaces}, they have discussed the Bridgeland stability of $\OO[1]$ in the case of the surface without a negative self-intersection curve or Picard rank $2$
    (see \cite[Proposition 6.3.]{arcara_miles_2016_bridgeland_stability_of_line_bundles_on_surfaces}) showing $\OO[1]$ has no proper subobjects when $s=0$ and 
    $\OO$ is $\sigma_{D, H}$-semistable if and only if $\OO[1]$ is $\sigma_{-D, H}$-semistable when $D.H<0$ via $(-)\dual = \RB\HC om(-, \OO)[1]$.
    Therefore, the discussion of $\sigma$-stability for $\OO[1]$ is essentially the same as the case for $\OO$.
\end{rem}

\begin{rem}
    Let $H=\frac{4}{15}C_1+\frac{2}{15}C_2-\frac{1}{15}E$. Then, $G_1 = -\frac{1}{4}C_1+\frac{1}{8}C_2$ and $G_2 = \frac{1}{60}C_1+\frac{1}{120}C_2-15E$
    satisfy our general setting in \cref{subsection: stability: settings and notations}. 
    Then, the equations of the maximal destabilizing walls $\wallpolalimtwo{\OO(-E_i)}{\OO}{H}{G}$ 
    in $\SC^0_{H,G_1, G_2} \cong \{(s,u_1,u_2) \mid s,u_1,u_2 \in \RB \}$ are followings (see also \cref{figure: maximal walls}):
    \begin{align*}
        \wallpolalimtwo{\OO(-E_1)}{\OO}{H}{G}:\ & -\frac{3}{\sqrt{15}} (s^2+u_1^2+u_2^2)+2 s u_1- \frac{6}{\sqrt{15}}  s u_2+s =0, \\
        \wallpolalimtwo{\OO(-E_2)}{\OO}{H}{G}:\ & -\frac{1}{\sqrt{15}}(s^2+u_1^2+u_2^2)-su_1-\frac{7}{\sqrt{15}}su_2+s =0, \text{and}\\
        \wallpolalimtwo{\OO(-E)}{\OO}{H}{G}:\ & -\frac{1}{\sqrt{15}}(s^2+u_1^2+u_2^2)+\frac{8}{\sqrt{15}}su_2+s=0.
    \end{align*}
    In the \cref{figure: maximal walls} below, 
    the orange surface represents $\wallpolalimtwo{\OO(-E_1)}{\OO}{H}{G}$, 
    the blue surface represents $\wallpolalimtwo{\OO(-E_2)}{\OO}{H}{G}$, and 
    the green surface represents $\wallpolalimtwo{\OO(-E)}{\OO}{H}{G}$.
    \begin{figure}[h]
        \centering
        \includegraphics[width=0.5\linewidth]{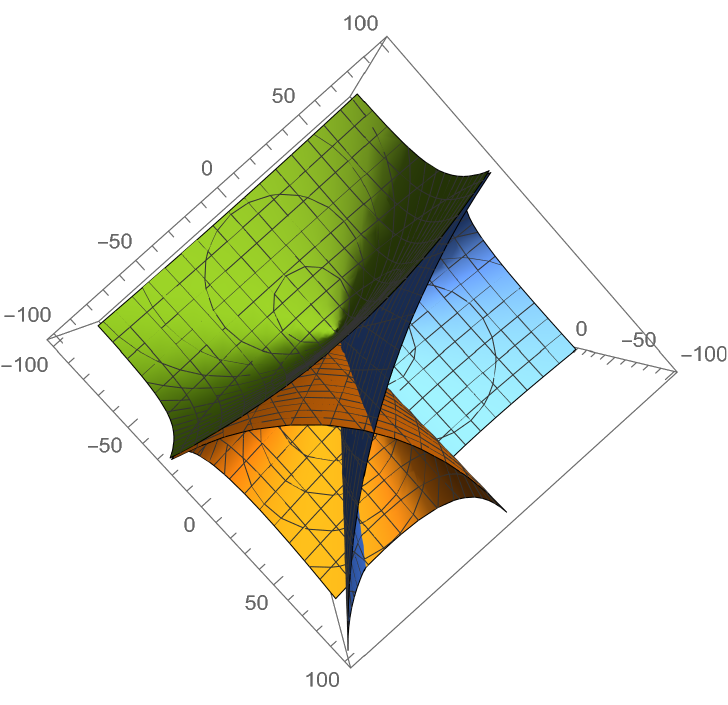}
        \caption{Walls $\wallpolalimtwo{\OO(-E_i)}{\OO}{H}{G}$}
        \label{figure: maximal walls}
    \end{figure}
\end{rem}

\section{Stability of torsion sheaves}\label{section: stability of torsion sheaves}

In this \cref{section: stability of torsion sheaves}, we will prove that $\OO_{E}(-1) = \OO(E)\vert_{E}$ 
is a $\sigma$-stable object for any $(-1)$-curve and 
for any divisorial stability condition $\sigma_{D,H}$. 
Note that if a torsion sheaf $\TC$ forms an exceptional pair $(\TC, \OO)$, 
$\TC\simeq \OO_{C}(C)$ where $E$ is $(-1)$-curve on $X$. 
Thus, it is an important case in terms of the quiver region.
Throughout this section, $E$ represents one of the $(-1)$-curves on $X$.

Let $\sigma_{D, tH}\in\SC_{H, G}$ be a stability condition and  
$F$ a subobject of $\OO(E)|_{E} = \OO_{E}(-1)$ in $\AC_{D,H}$. 
Let $Q\coloneqq \Cone(F \rightarrow \OO_{E}(-1))$ in $D^b(X)$.
Taking a cohomological functor $H^i(-)$ we have the following long exact sequence:
\[
0 \to H^{-1}(Q) \to F \to \OO_{E}(-1) \to H^{0}(Q) \to 0. 
\]
Especially, $F$ is a sheaf. 
Let 
\[
0=F_0 \subset F_1 \subset\dots\subset F_{n-1}\subset F_n =F,
\]
and
\[
0=H^{-1}(Q)_0 \subset H^{-1}(Q)_1 \subset\dots\subset H^{-1}(Q)_{m-1}\subset H^{-1}(Q)_m =H^{-1}(Q)
\]
be the Harder-Narasimhan filtrations (with respect to $\mu_{H}$-stability) of $F$ and $H^{-1}(Q)$ respectively.
\begin{rem}\label{remark: subobject and filtration torsion case}
$F \subset \OO_{E}(-1)$ in $\AC_{D,H}$ if and only if $\mu_{tH}(H^{-1}(Q)_1) \le D.H < \mu_{tH}(F/F_{n-1})$ as in the proof of \cref{lemma: subobject and filtration}. 
\end{rem}

The proof proceeds with induction as in the line bundles case. Therefore, Bertram's lemma for this setting plays an important role in the induction step.
\begin{lem}[Bertram's lemma for torsion sheaves {\cite[Lemma 5.1.]{arcara_miles_2017_projectivity_of_bridgeland_moduli_spaces_on_del_pezzo_surfaces_of_picard_rank_2}}]\label{lemma: Bertram's lemma for torsion sheaves}
Let $H$ be an ample $\RB$--divisor and $G$ be an $\RB$--divisor such that $H.G=0$ and $H^2=-G^2=1$.
    Let $F$ be a subobject of $\OO_E(-1)$ in $\AC_{D, H}$ 
    such that $\sigma_{D, tH}\in\wallpolatwo{F}{\OO_E(-1)}{H}{G}$.
    \begin{enumerate}
        \item If 
        \[
        \wallpolatwo{F}{\OO_E(-1)}{H}{G} \cap \Pi_u \cap \{\sigma_{D, tH}\in\SC_{H, G}\mid s=\mu_{H}(F/F_{n-1})\} \neq \emptyset
        \]
        for some $u$ and $t>0$, 
        then $F_{n-1}\subset \OO_E(-1)$ in $\AC_{D, H}$ and $\beta(F_{n-1})>\beta(F)$.
        \item If 
        \[
        \wallpolatwo{F}{\OO_E(-1)}{H}{G} \cap \Pi_u \cap \{\sigma_{D, tH}\in\SC_{H, G}\mid s=\mu_{H}(H^{-1}(Q)_1)\} \neq \emptyset
        \]
        for some $u$ and $t>0$, then $F/(H^{-1}(Q)_1)\subset \OO_E(-1)$ in $\AC_{D, H}$ and $\beta(F/(H^{-1}(Q)_1))>\beta(F)$.
    \end{enumerate}
\end{lem}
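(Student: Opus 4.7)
The plan is to follow the template of \cref{lemma: Bertram's lemma for line bundle} essentially verbatim, replacing the reference object $\OO$ (with $\ch=(1,0,0)$) by the torsion sheaf $\OO_E(-1)$ (with $\ch=(0,E,-1/2)$). For each of the two assertions, two things must be verified: the membership $F_{n-1}\subset\OO_E(-1)$ (resp.\ $F/(H^{-1}(Q)_1)\subset\OO_E(-1)$) in $\AC_{D,H}$, and the strict Bridgeland-slope inequality.

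For the membership part of (1), note that $F$ is a coherent sheaf by the displayed four-term exact sequence, and \cref{remark: subobject and filtration torsion case} forces $\mu_H(F/F_{n-1})>D.H$. The Harder--Narasimhan slopes of $F_{n-1}$ are therefore all at least $\mu_H(F_{n-1}/F_{n-2})\ge\mu_H(F/F_{n-1})>D.H$, so $F_{n-1}\in\TC_{D,H}\subset\AC_{D,H}$; the sheaf-theoretic inclusion $F_{n-1}\hookrightarrow F$ then becomes a monomorphism in $\AC_{D,H}$, and composing with $F\hookrightarrow\OO_E(-1)$ gives the claim. For the slope inequality, apply the additivity $Z_{D,tH}(F)=Z_{D,tH}(F_{n-1})+Z_{D,tH}(F/F_{n-1})$ coming from the short exact sequence $0\to F_{n-1}\to F\to F/F_{n-1}\to 0$ in $\AC_{D,H}$. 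The hypothesis that $\WC(F,\OO_E(-1))\cap\Pi_u$ reaches the vertical line $s=\mu_H(F/F_{n-1})$ at some $t>0$ produces a stability condition $\sigma_1$ on this wall at which $\Im Z_{\sigma_1}(F/F_{n-1})=0$; combined with the fact that $Z(F)$ remains a positive real multiple of $Z(\OO_E(-1))$ along the wall, and that the walls $\WC(F,\OO_E(-1))$ and $\WC(F_{n-1},\OO_E(-1))$ are nested semicircles in $\Pi_u$ by \cref{Theorem: Bertram nested wall theorem}, a direct computation (parallel to the one in Arcara--Miles) translates the nesting into $\beta(F_{n-1})>\beta(F)$ at the original $\sigma_{D,tH}$.

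Claim (2) is proved symmetrically: use the short exact sequence $0\to H^{-1}(Q)_1\to F\to F/(H^{-1}(Q)_1)\to 0$ in $\AC_{D,H}$, invoke \cref{remark: subobject and filtration torsion case} via $\mu_H(H^{-1}(Q)_1)\le D.H$ to place $F/(H^{-1}(Q)_1)$ in $\AC_{D,H}$ (and to identify it as a subobject of $\OO_E(-1)$ there), and then run the same wall-plus-central-charge argument. The main obstacle is the final arithmetic step: one must verify that the explicit comparison of Bridgeland slopes used by Arcara--Miles is not degenerated by the fact that $\rk \OO_E(-1)=0$, since this changes the shape of $\WC(F,\OO_E(-1))$ in the slice $\Pi_u$. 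This is where care is required, but it should go through because $\OO_E(-1)$ is $\sigma$-semistable for some divisorial $\sigma$, so \cref{Theorem: Bertram nested wall theorem} still applies, and the semicircle-nesting picture used in the line-bundle case carries over once the updated Chern-character data is substituted.
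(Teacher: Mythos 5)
The paper does not actually prove this lemma: it is imported verbatim as a citation of Arcara--Miles \cite[Lemma 5.1.]{arcara_miles_2017_projectivity_of_bridgeland_moduli_spaces_on_del_pezzo_surfaces_of_picard_rank_2}, just as \cref{lemma: Bertram's lemma for line bundle} is imported from the 2016 paper. So there is no in-paper argument to compare against, and your proposal should be judged as a self-contained reconstruction. Its overall architecture is the right one and matches the cited source: the membership claims follow from the Harder--Narasimhan slope bounds exactly as you say (for (1), every $\mu_H$-HN factor of $F_{n-1}$ has slope at least $\mu_H(F_{n-1}/F_{n-2})>\mu_H(F/F_{n-1})>D.H$, so $F_{n-1}\in\TC_{D,H}$ and the composite of monomorphisms $F_{n-1}\hookrightarrow F\hookrightarrow\OO_E(-1)$ is a monomorphism in $\AC_{D,H}$; for (2) dually one checks $H^{-1}(Q)_1\in\FC_{D,H}$ and that the HN factors of $H^{-1}(Q)/H^{-1}(Q)_1$ still have slope $\le D.H$). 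Your worry about $\rk\OO_E(-1)=0$ is a non-issue: \cref{Theorem: Bertram nested wall theorem} is stated for an arbitrary $B\in D^b(X)$, and since $\Im Z_{D,tH}(\OO_E(-1))=tE.H>0$ the torsion sheaf always lies in $\TC_{D,H}$, so the semicircle/nesting picture in $\Pi_u$ is unchanged.

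The one substantive gap is that the step which actually produces the strict inequality is asserted rather than derived. Concretely, writing $A=F/F_{n-1}$ (resp.\ $A=H^{-1}(Q)_1$), at the point $\sigma_1$ of $\wallpolatwo{F}{\OO_E(-1)}{H}{G}\cap\Pi_u$ lying on the line $s=\mu_H(A)$ one has $\Im Z_{\sigma_1}(A)=0$, hence
\[
\beta(F_{n-1})=\beta(F)+\frac{\Re Z_{\sigma_1}(A)}{\Im Z_{\sigma_1}(F)},
\]
so everything hinges on $\Re Z_{\sigma_1}(A)>0$. This is where the Bogomolov inequality for the $\mu_H$-semistable positive-rank sheaf $A$ enters: writing $c_1(A)=\rk(A)sH+gG$ at $s=\mu_H(A)$, one gets
\[
\Re Z_{\sigma_1}(A)\ \ge\ \frac{\bigl(g-\rk(A)u\bigr)^2}{2\rk(A)}+\frac{\rk(A)}{2}t^2\ >\ 0 ,
\]
which only involves $A$ and is therefore insensitive to replacing $\OO$ by $\OO_E(-1)$; the nesting from \cref{Theorem: Bertram nested wall theorem} then transports $\beta(F_{n-1})>\beta(F)$ from $\sigma_1$ back to the original $\sigma_{D,tH}$ on the same semicircle. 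Your write-up points at this computation ("parallel to the one in Arcara--Miles") without performing it, and it is precisely the content of the lemma; a complete proof must include it. One small misstatement to fix: on the wall $Z(F)$ and $Z(\OO_E(-1))$ are positive real multiples of one another only because both objects lie in the heart, i.e.\ this is the definition of $\beta(F)=\beta(\OO_E(-1))$, not an additional input.
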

    Note that $F$ is not $\mu_H$--semistable in the former case. 

Now, back to the setting in Picard rank $3$.
Let $H, G_1, G_2$ be divisors as the usual setting.
Denote $G\coloneqq\cos(\theta) G_1+ \sin(\theta) G_2$ for a $\theta \in \RB$.

\begin{lem}\label{Lemma: stability of torsion sheaf rank 0}
    There is no subobject $F \subset\OO_{E}(-1)$ with $\rk(F)=0$ and $\beta(F)\ge\beta(\OO_{E}(-1))$ for any $\sigma_{D, tH}\in\SC_{H. G}$.
\end{lem}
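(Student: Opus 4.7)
The plan is to reduce the problem to an explicit classification of the possible subobjects followed by a direct slope comparison. First, I would argue that any rank-$0$ subobject $F\subset\OO_E(-1)$ in $\AC_{D,H}$ is in fact an honest coherent subsheaf of $\OO_E(-1)$: taking the long exact sequence of cohomology of the triangle $F\to\OO_E(-1)\to Q$ and using that $H^{-1}$ of any object of $\AC_{D,H}$ lies in the torsion-free part $\FC_{D,H}$, together with the vanishing $H^{-1}(\OO_E(-1))=0$, forces $H^{-1}(F)=0$. Since $\rk F=0$, the sheaf $F$ is then torsion, so $H^{-1}(Q)$ is a torsion-free subsheaf of a torsion sheaf, hence zero. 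Therefore $F\hookrightarrow \OO_E(-1)$ is a monomorphism of coherent sheaves.

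Once $F$ is known to be a subsheaf, I would invoke that $\OO_E(-1)$ is a line bundle on $E\simeq\PP^1$ and is torsion-free as an $\OO_E$-module, so any coherent subsheaf is again an $\OO_E$-line bundle of degree at most $-1$. Hence $F\simeq \OO_E(-1-k)$ for some $k\geq 0$, and $F$ is a \emph{proper} subobject precisely when $k\geq 1$.

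The last step is a direct Chern-class and slope calculation. Starting from $0\to\OO_X(-E)\to\OO_X\to\OO_E\to 0$, together with the point sequences on $E$, one obtains $\ch(\OO_E(-1-k))=(0,\,E,\,-\tfrac12-k)$. Plugging into the formula for $Z_{D,tH}$ from Section \ref{section: Quiver region and strategy of the proof},
\[
Z_{D,tH}(\OO_E(-1-k)) \;=\; \Bigl(\tfrac{1}{2}+k+D.E\Bigr)\;+\;i\,t\,(H.E),
\]
and since $H$ is ample we have $H.E>0$, so $\OO_E(-1-k)\in\AC_{D,H}$. Taking the difference of Bridgeland slopes gives
\[
\beta(\OO_E(-1-k))-\beta(\OO_E(-1)) \;=\; -\frac{k}{t\,(H.E)} \;<\; 0
\]
for all $k\geq 1$, which shows that no proper rank-$0$ subobject can satisfy $\beta(F)\geq\beta(\OO_E(-1))$. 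The only step requiring any care is the first one, namely upgrading a derived-categorical subobject to a genuine sub\emph{sheaf}; once this is established, the rest of the argument reduces to the elementary slope comparison above, and there is no real obstacle.
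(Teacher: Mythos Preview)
Your proof is correct and follows essentially the same approach as the paper's: both reduce to showing that $F$ is an honest subsheaf of $\OO_E(-1)$ and then perform an elementary slope comparison. The only cosmetic difference is that the paper argues via the quotient $Q$ (a length-$l$ skyscraper with $Z_{D,H}(Q)=-l$, forcing $\beta(\OO_E(-1))>\beta(F)$), whereas you classify $F\cong\OO_E(-1-k)$ and compute $\beta(F)$ directly; these are equivalent since $k=l$.
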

    This proof is completely the same strategy and discussion as a part of \cite[Theorem 5.2.]{arcara_miles_2016_bridgeland_stability_of_line_bundles_on_surfaces}, 
    but to avoid the confusion of notations and for the convenience of the readers, we will write it here.
\begin{proof}[\proofname \ of \cref{Lemma: stability of torsion sheaf rank 0}]
    By the exact sequence
    \[
    0 \to F\to \OO_{E}(-1) \to Q \to 0 \ \text{in}\  \AC_{D, H},
    \]
    we have $c_1(F) = E$, and $Q$ is a torsion sheaf supported on a zero-dimensional scheme $P$ of length $l$.
    Then, $Z_{D, H}(Q) = -l$ and thus, $\beta(\OO_{E}(-1))>\beta(F)$ holds by the direct calculation.
\end{proof}
Now we can show the $\sigma$-stability of torsion sheaves of the form $\OO(E)\vert_E$, combining the above discussions.
\begin{prop}\label{proposition: stability of torsion sheaf}
    Let $E$ be a $(-1)$-curve on $X$.
    Then, $\OO_{E}(-1)$ is $\sigma_{D,tH}$--stable for any $\sigma_{D, tH}\in\SC_{H, G}$.
\end{prop}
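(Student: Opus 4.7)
The plan is to proceed by induction on the rank $r=\rk F$ of a hypothetical destabilizing subobject $F\subset \OO_{E}(-1)$ in $\AC_{D,H}$, mirroring the strategy already used for line bundles. The base case $r=0$ has already been handled in \cref{Lemma: stability of torsion sheaf rank 0}, so it remains to treat $r\geq 1$. I would assume $\beta(F)\geq\beta(\OO_{E}(-1))$ at some $\sigma_{D,tH}\in\SC_{H,G}$, let $Q=\Cone(F\to\OO_{E}(-1))$, and form the Harder-Narasimhan filtrations of $F$ and $H^{-1}(Q)$ with respect to $\mu_{H}$-stability. By \cref{remark: subobject and filtration torsion case} the subobject condition forces the slope inequalities $\mu_{tH}(H^{-1}(Q)_{1})\leq D.H<\mu_{tH}(F/F_{n-1})$, and $F$ is in particular a sheaf of rank exactly $r$.

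The inductive step is to produce another subobject of $\OO_{E}(-1)$ in $\AC_{D,H}$ of rank strictly smaller than $r$ whose Bridgeland slope $\beta$ is strictly larger than that of $F$, via \cref{lemma: Bertram's lemma for torsion sheaves}. Concretely, I would choose the slice $\SC_{H,G}$ through $\sigma_{D,tH}$ and a suitable vertical plane $\Pi_{u}$ in it, then verify that the semicircular wall $\wallpolatwo{F}{\OO_{E}(-1)}{H}{G}\cap\Pi_{u}$ meets either the vertical line $\{s=\mu_{H}(F/F_{n-1})\}$ or the line $\{s=\mu_{H}(H^{-1}(Q)_{1})\}$. Bertram's lemma then delivers $F_{n-1}$ or $F/H^{-1}(Q)_{1}$ as a new subobject of $\OO_{E}(-1)$ with strictly larger $\beta$; since each such step reduces the rank by at least one, finitely many iterations produce a destabilizer of rank zero, contradicting \cref{Lemma: stability of torsion sheaf rank 0}.

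The main obstacle will be verifying the intersection hypothesis of \cref{lemma: Bertram's lemma for torsion sheaves}, that is, showing that the wall $\wallpolatwo{F}{\OO_{E}(-1)}{H}{G}$ cannot be so small that it misses the prescribed slope line while still containing $\sigma_{D,tH}$. By \cref{Theorem: Bertram nested wall theorem} this reduces to a numerical comparison of the centers and radii of the nested semicircles in $\Pi_{u}$, controlled by the Bogomolov discriminant of $F$ and by the fact that $\ch(\OO_{E}(-1))=(0,E,-\tfrac{1}{2})$ with $E^{2}=-1$; the manipulations are analogous to those in \cref{subsection: line bundle stability higher rank}. When the wall nevertheless fails to meet the required slope line, I would invoke the classification in \cref{proposition: classification of destabilizing walls} and argue case-by-case (according as $\wallpolalimtwo{F}{\OO_{E}(-1)}{H}{G}$ is a parabola, ellipse, cone, or right/left hyperbola) that such an $F$ cannot actually destabilize $\OO_{E}(-1)$, in the spirit of \cref{lemma: Arcara Miles 5.6 not Left Hyperbola case} and \cref{Lemma: Arcara Miles 5.7 Left Hyperbola case}. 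Combining these case analyses with the induction closes the proof.
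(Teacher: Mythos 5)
Your overall skeleton matches the paper's: induction on $\rk F$, base case $\rk F=0$ via \cref{Lemma: stability of torsion sheaf rank 0}, and an inductive step that feeds the wall $\wallpolatwo{F}{\OO_E(-1)}{H}{G}$ into Bertram's lemma (\cref{lemma: Bertram's lemma for torsion sheaves}) to produce a lower-rank subobject with larger $\beta$. You also correctly identify the crux: one must show the wall actually reaches the slope line $\{s=\mu_H(F/F_{n-1})\}$ or $\{s=\mu_H(H^{-1}(Q)_1)\}$ so that Bertram's lemma applies.

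However, your plan for that crux has a genuine gap. Your fallback --- ``when the wall fails to meet the required slope line, invoke \cref{proposition: classification of destabilizing walls} and argue case-by-case in the spirit of \cref{lemma: Arcara Miles 5.6 not Left Hyperbola case} and \cref{Lemma: Arcara Miles 5.7 Left Hyperbola case}'' --- does not transfer: those statements classify and analyze walls $\wallpolalimtwo{F}{\OO}{H}{G}$ for subobjects of the \emph{structure sheaf} (rank $1$, $c_1=0$), and their conclusions (e.g.\ producing $\OO(-C)$ with $C=c_1(H^0(Q))$ a negative curve) are tailored to that case. For $\OO_E(-1)$, which has rank $0$ and $c_1=E$, the wall equation is structurally different, and no analogue of that classification or of those two lemmas is available. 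The paper closes this gap differently and more decisively: \cref{lemma: torsion case shape of wall} computes the quadratic form of $\wallpolalim{F}{\OO_E(-1)}{H}{G_1}{G_2}$ in the $t\to 0$ limit and shows by an eigenvalue computation that it is always a hyperboloid or a cone --- never an ellipsoid or paraboloid --- hence unbounded in the slope direction, so the intersection hypothesis of Bertram's lemma is \emph{always} satisfied and no fallback case analysis is needed. Without this shape lemma (or an equivalent unboundedness argument), your induction step is not guaranteed to run, since a priori a bounded wall could contain $\sigma_{D,tH}$ while missing both slope lines.
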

\begin{proof}[\proofname \ of \cref{proposition: stability of torsion sheaf}]
    Denote $D=sH+uG$ $(s,u\in\RB)$. Let $\Phi$ and $\phi$ be maps defined by 
    \begin{align}
        &\Phi: &\SC_{D, H} &&\longrightarrow &&\RB\langle s, u, t\rangle &&\stackrel{t=0}{\longrightarrow} &&\RB\langle s, u\rangle &&\stackrel{\phi}{\longrightarrow} &&\RB\langle x, y, z\rangle \ (\cong\Pic_{\RB}(X))
        \label{equation: st-plane to xyz-space}
        \\
       &  &\sigma_{D, tH} &&\longmapsto &&(s, u, t) &&\longmapsto &&(s, u) &&\longmapsto &&(s, u\cos\theta, u\sin\theta) \notag
    \end{align}
    The generator of $\Pic(X)$ is chosen as $H, G_1$, and $G_2$, that is, the last isomorphism
    maps $(x, y, z)$ to $xH+yG_1+zG_2$.

    
    Assume that there exists a subobject $F \subset \OO_E(-1)$ in $\AC_{D, H}$ of rank $r$ 
    such that $\beta(F) \ge \beta(\OO_E(-1))$ at $\sigma_{D, tH}$.
    Note that we only need to consider the case of $t\to 0$ by Bertram's nested wall theorem (\cref{Theorem: Bertram nested wall theorem}). 

    By the \cref{lemma: torsion case shape of wall} below, 
    the the conic $W\coloneqq\Phi(\wallpolalim{F}{\OO_E(-1)}{H}{G_1}{G_2})$ is 
    a hyperboloid or a cone in $(x,y,z)$-space via the identification (\ref{equation: st-plane to xyz-space}).
    As $D$ is on or inside the wall from the assumption and \cref{remark: subobject and filtration torsion case}, 
    we have concluded that
    \begin{align*}
    W\cap\{(x, y, z)\in\RB\langle x, y, z\rangle \mid x=\mu_H(F/F_{n-1})\} \neq \emptyset\  \text{or} \\
    W\cap\{(x, y, z)\in\RB\langle x, y, z\rangle \mid x=\mu_H(H^{-1}(Q)_1)\} \neq \emptyset
    \end{align*}
    in $(x,y,z)$-space. 
    We may assume that 
    \begin{align*}
    W\cap\{(x, y, z)\in\RB\langle x, y, z\rangle \mid x=\mu_H(F/F_{n-1})-\varepsilon\} \neq \emptyset\  \text{or} \\
    W\cap\{(x, y, z)\in\RB\langle x, y, z\rangle \mid x=\mu_H(H^{-1}(Q)_1)\} \neq \emptyset
    \end{align*}
    for an enough small $\varepsilon>0$.
    
    Here, we denote this intersection $P$. 

    Let $\sigma_P \coloneqq \sigma_{D_P, H_P}\in P$. 
    Because $\sigma_P\in P\subset W$, $\beta(F) = \beta(\OO_{E}(-1))$ holds at $\sigma_P$.
    Also, $F\subset\OO_E(-1)$ in $\AC_{\sigma_P}$ by \cref{remark: subobject and filtration torsion case}.
    Thus, $\sigma_P$ can be applied to Bertram's lemma (\cref{lemma: Bertram's lemma for torsion sheaves}). 

    There exists a subobject $F'\subset\OO_E(-1)$ in $\AC_{\sigma_P}$ such that
    \[
    \beta(\OO_E(-1)) = \beta(F) < \beta(F')
    \]
    at $\sigma_P$.
    Especially, $F'\neq F$ and thus, $\rk F' < \rk F$ by the choice of $F'$.
    Repeating this discussion, there exist stability condition $\sigma\in\divstab(X)$ and  $\FC\subset\OO_E(-1)$ such that $\rk\FC =0$ and $\beta(\FC) > \beta(\OO_E(-1))$ at $\sigma$.
    It contradicts to the \cref{Lemma: stability of torsion sheaf rank 0}.
\end{proof}

\begin{lem}\label{lemma: torsion case shape of wall}
    Assume that $F$ is a subobject of $\OO_E(-1)$ in $\AC_{D, H}$ 
    for a $(-1)$-curve $E$ in $X$.
    Let $\sigma_{D, tH}\in\SC_{H, G}$ be a divisorial stability condition. 
    Then, the wall $\Phi(\wallpolalim{F}{\OO_E(-1)}{H}{G_1}{G_2})$ is 
    a hyperboloid or a cone in $(x,y,z)$-space via the identification \textup{\eqref{equation: st-plane to xyz-space}}.
\end{lem}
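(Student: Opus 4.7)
The plan is to write down the wall equation $\wallequation{F}{\OO_E(-1)}=0$ explicitly in the coordinates $(s,u_1,u_2,t)$, pass to the limit $t\to 0$, and recognize the resulting quadric in $(x,y,z)$-space by inspecting its principal part. The key point is that the $(-1)$-curve identity $E^2=-1$ forces the underlying quadratic form to be non-degenerate of Lorentzian signature $(++-)$, from which the conclusion follows.

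Set $r=\rk F$, $h=c_1(F).H$, $d_i=c_1(F).G_i$, $c=\ch_2(F)$, $a=E.H$, $b_i=E.G_i$. A direct computation with $\ch(\OO_E(-1))=(0,E,-\tfrac{1}{2})$ and the orthogonality relations $H.G_i=G_1.G_2=0$, $H^2=-G_1^2=-G_2^2=1$ gives
\begin{align*}
Z_{D,tH}(\OO_E(-1)) &= \bigl(\tfrac{1}{2}+as+b_1u_1+b_2u_2\bigr)+iat, \\
Z_{D,tH}(F) &= \bigl(-c+hs+d_1u_1+d_2u_2-\tfrac{r}{2}(s^2-u_1^2-u_2^2-t^2)\bigr)+it(h-rs).
\end{align*}
Every monomial of $\wallequation{F}{\OO_E(-1)}$ contains a factor $t$. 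Dividing by $t$, letting $t\to 0$, and multiplying through by $-2/r$ when $r\ne 0$, the wall equation takes the form
\[
a(x^2+y^2+z^2)+2b_1 xy+2b_2 xz+x-\tfrac{2(hb_1-ad_1)}{r}y-\tfrac{2(hb_2-ad_2)}{r}z-\tfrac{2ac+h}{r}=0
\]
under the identification \textup{\eqref{equation: st-plane to xyz-space}} $(s,u_1,u_2)=(x,y,z)$.

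The symmetric matrix of the quadratic part is
\[
M=\begin{pmatrix} a & b_1 & b_2 \\ b_1 & a & 0 \\ b_2 & 0 & a \end{pmatrix},
\]
whose characteristic polynomial factors as $(a-\lambda)\bigl[(a-\lambda)^2-(b_1^2+b_2^2)\bigr]$, so the eigenvalues are $a$ and $a\pm\sqrt{b_1^2+b_2^2}$. Writing $E=aH-b_1G_1-b_2G_2$ in the orthogonal basis $(H,G_1,G_2)$ gives $E^2=a^2-b_1^2-b_2^2=-1$, hence $\sqrt{b_1^2+b_2^2}=\sqrt{a^2+1}>a>0$; the positivity of $a=H.E$ uses that $H$ is ample and $E$ is effective. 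Thus the eigenvalues of $M$ have signs $(+,+,-)$, so the quadratic form is non-degenerate and Lorentzian. Completing the square transforms the wall equation into $Q(x-x_0,y-y_0,z-z_0)=K$ for some $(x_0,y_0,z_0)\in\RB^3$ and $K\in\RB$; a real quadric in $\RB^3$ with non-degenerate Lorentzian principal part is a hyperboloid of one sheet ($K>0$), a hyperboloid of two sheets ($K<0$), or a cone ($K=0$), which proves the claim.

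The only remaining case is $r=0$, in which the quadratic terms vanish and one obtains a planar equation instead; such rank-zero subobjects, however, cannot satisfy $\beta(F)\ge\beta(\OO_E(-1))$ by \cref{Lemma: stability of torsion sheaf rank 0}, so they are irrelevant to the application of \cref{lemma: torsion case shape of wall} inside the proof of \cref{proposition: stability of torsion sheaf}. The main obstacle is purely bookkeeping within the expansion of $\wallequation{F}{\OO_E(-1)}$; the conceptual content, encoded in the signature of $M$, is dictated entirely by $E$ being a $(-1)$-curve.
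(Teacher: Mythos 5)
Your proof is correct and follows essentially the same route as the paper's: both write the $t\to 0$ limit of the wall equation as a quadric in three variables, form the symmetric matrix of its quadratic part, and conclude from its signature (one eigenvalue of one sign, two of the other) that the wall is a hyperboloid or a cone. Your execution is cleaner --- working in the orthogonal basis $(H,G_1,G_2)$ makes the Lorentzian signature a direct consequence of $E^2=a^2-b_1^2-b_2^2=-1$ and $a=H.E>0$, rather than of the paper's brute-force characteristic-polynomial computation in the $(C_1,C_2,E)$ coordinates --- and your explicit exclusion of the degenerate case $r=0$, where the quadratic part vanishes and the statement would fail, addresses a point the paper's statement and proof silently assume.
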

\begin{proof}[\proofname\ of \cref{lemma: torsion case shape of wall}]
    Denote $\ch(F)=(r,pH+q_1G_1+q_2G_2,\ch_2(F))$.
    Then, the wall is given by the equation
    \begin{align*}
       &(x^2+y^2+z^2) \left(a b c (2 a b - c^2) r\right)+xabr\left(2ab-c^2 \right)(2z\sqrt{2ab}-\sqrt{2 a b-c^2})\\
       &\quad -yc \sqrt{2a b} (2 a b - c^2) (-b p + a q_1)-z \sqrt{2a b \left(2 a b-c^2\right)}(2 a b - c^2) (b p + a q_1)\\
       &\quad + a b \left(2 a b-c^2\right) (a q_1+b p-c (2 \ch_2(G)+q_2))=0. 
    \end{align*}
    The coefficient matrix of the monomials of degree 2 in the above equations for $x, y$ and $z$ is
    \[
    M \coloneqq  \begin{pmatrix}
        a b c r(2 a b - c^2)  & 0 & abr\left(2ab-c^2 \right)\sqrt{2ab} \\
       0 &  a b c r(2 a b - c^2)  & 0 \\
       abr\left(2ab-c^2 \right)\sqrt{2ab} & 0 & a b c r(2 a b - c^2) 
    \end{pmatrix}.
\]
The characteristic polynomial of this matrix $p(M)$ is 
\[
p(M) = -t^3+3 a b c (2 a b - c^2) rt^2+a^2 b^2 (2 a b - 3 c^2) (-2 a b + c^2)^2 r^2t-a^3 b^3 c (-2 a b + c^2)^4 r^3.
\]
Thus, the eigenvalues of $M$ are one positive real number and two negative real numbers.
This means that the wall is hyperbolic or cone.
\end{proof}

\bibliographystyle{amsalpha}
\bibliography{bibtex_tyoshida}
\end{document}